\documentclass[reqno,12pt]{amsart}
\usepackage{amscd,amsfonts,mathrsfs,amsthm,amssymb, amsmath,mathtools}
\usepackage{upgreek}
\usepackage{csquotes}
\usepackage{enumerate}
\usepackage{bbm}
\usepackage{multicol}
\usepackage{stmaryrd,color}
\usepackage{array}
\usepackage{ae}
\usepackage{accents}
\usepackage{epsfig}
\usepackage[e]{esvect}
\usepackage[nobysame]{amsrefs}
\usepackage[margin=1in]{geometry}
\usepackage[toc,title,page]{appendix}
\usepackage{hyperref}
\hypersetup{colorlinks=true,linkcolor=blue,filecolor=blue,urlcolor=blue, citecolor=blue}

\let\oldtocsection=\tocsection
\let\oldtocsubsection=\tocsubsection
\let\oldtocsubsubsection=\tocsubsubsection
\renewcommand{\tocsection}[2]{\hspace{0em}\oldtocsection{#1}{#2}}
\renewcommand{\tocsubsection}[2]{\hspace{1em}\oldtocsubsection{#1}{#2}}
\renewcommand{\tocsubsubsection}[2]{\hspace{2em}\oldtocsubsubsection{#1}{#2}}

\parindent = 0 mm
\hfuzz     = 6 pt
\parskip   = 3 mm
\baselineskip=12pt

\setcounter{MaxMatrixCols}{10}

\def\equationcolor {\color{black}}
\def\textcolor     {\color{black}}

\def\bcoleq    {\begin{equation}\equationcolor}
\def\ecoleq    {\textcolor\end{equation}}
\def\bcoleqn   {\equationcolor\begin{eqnarray}}
\def\ecoleqn   {\end{eqnarray}\textcolor}

\def\C        {\mathbb{C}}
\def\S        {\mathbb{S}}

\def\h{{hor}}
\def\v{{ver}}

\def\rim{{{R}_M}}

\def\Re{\operatorname{Re}}
\def\Im{\operatorname{Im}}

\def\span{\operatorname{span}}

\def\div{\operatorname{div}}
\def\det{\operatorname{det}}
\def\R{\mathbb{R}}

\DeclareMathOperator*{\trace}{trace}

\newtheorem{theorem}{Theorem}[section]
\newtheorem{mythm}{Theorem}

\newtheorem{lemma}[theorem]{Lemma}

\newtheorem*{thma}{Theorem A}
\newtheorem*{thmb}{Theorem B}
\newtheorem*{thmc}{Theorem C}
\newtheorem*{thmd}{Theorem D}
\newtheorem{corollary}[theorem]{Corollary}
\newtheorem{proposition}[theorem]{Proposition}
\newtheorem{definition}[theorem]{Definition}
\theoremstyle{definition}
\newtheorem*{assumption*}{$\lambda_{1}$-Condition}

\newtheorem{remark}[theorem]{Remark}

\def\pproof#1{\@ifnextchar[\opargproof
{\opargproof[\it Proof of #1.]}}
\def\opargproof[#1]{\par\noindent {\bf #1 }}

\makeatother

\numberwithin{equation}{section}

\begin{document}

\title[Harmonic and minimal vector fields maps]{Gauss maps of harmonic and minimal great circle fibrations}

\author[I. Fourtzis]{\textsc{I. Fourtzis}}
\address{Ioannis Fourtzis\newline
University of Ioannina,
Section of Algebra \& Geometry,
45110 Ioannina, Greece,\newline
{\sl E-mail address:} {\bf i.fourtzis@uoi.gr}
}

\author[M. Markellos]{\textsc{M. Markellos}}
\address{Michael Markellos\newline
University of Ioannina,
Section of Algebra \& Geometry,
45110 Ioannina, Greece,\newline
{\sl E-mail address:} {\bf mmarkellos@hotmail.gr}
}
\author[A. Savas-Halilaj]{\textsc{A. Savas-Halilaj}}
\address{Andreas Savas-Halilaj\newline
University of Ioannina,
Section of Algebra \& Geometry,
45110 Ioannina, Greece,\newline
{\sl E-mail address:} {\bf ansavas@uoi.gr}
}

\renewcommand{\subjclassname}{  \textup{2000} Mathematics Subject Classification}
\subjclass[2000]{Primary 53C43, 58E20, 53C24, 53C40, 53C42, 57K35}
\keywords{Hopf vector fields, great circle fibration, Gauss maps, maximum principle.}
\thanks{M. Markellos \& A. Savas-Halilaj would like to acknowledge support by (HFRI) Grant No:133.}
\parindent = 0 mm
\hfuzz     = 6 pt
\parskip   = 3 mm
\date{}

\begin{abstract}
We investigate Gauss maps associated to great circle fibrations of $\S^3$. We show that the 
associated Gauss map to such a fibration is harmonic (respectively minimal) if and only if the unit vector field generating the great circle foliation is
harmonic (respectively minimal).
These results can be viewed as analogues of the classical theorem of Ruh \& Vilms about the harmonicity of the Gauss map of a minimal submanifold in the euclidean space. Moreover, we prove that a harmonic or minimal unit vector field in $\S^3$ with great circle integral curves
is a Hopf vector field.
\end{abstract}

\maketitle

\section{Introduction}

Suppose that $(M,g)$ is an $m$-dimensional manifold equipped with a Riemannian metric $g$.
A unit vector field $\zeta\in\mathfrak{X}(M)$ can be regarded as a ``graphical" map from $M$ to its unit tangent bundle $UM$
equipped with the Sasaki metric $g_S$.
There are two natural functionals that we may consider in the space unit vector fields. The first one
is the {\em energy
functional}
$$
E(\zeta)=\frac{m}{2}{\rm vol}(M,g)+\frac{1}{2}\int_M|\nabla\zeta|^2{\rm dvol}_g.
$$
Critical points of the energy functional with respect to variations through nearby unit
vector fields are called {\em harmonic unit vector fields}. The second functional is the {\em volume functional}
$$
V(\zeta)=\int_M \sqrt{{\rm det}(I+(\nabla\zeta)^T\circ(\nabla\zeta))}\,{\rm dvol}_{g}
$$
whose critical points are called {\em minimal unit vector fields}.

One can exploit the complex structure $\mathcal{J}$ of $\C^{n}=\R^n\times\R^n$ to generate interesting unit 
vector fields on the unit sphere $\S^{2n-1}$. More precisely, if $\nu$ is the unit normal of
$\S^{2n-1}$ then $\zeta=\mathcal{J}\nu$ gives rise to a unit vector field whose integral curves are great circles.
A unit vector field of this type is called {\em Hopf vector field} and the 
corresponding quotient map $f:\S^{2n-1}\to\mathbb{CP}^n$ is called {\em Hopf fibration},
where $\mathbb{CP}^n$ stands for the {\em complex projective space} equipped with its 
standard 
Fubini-Study metric. It turns out that the Hopf fibration is a harmonic Riemannian submersion. Additionally, the Hopf fibration is a {\em minimal map}, i.e.  its graph $\varGamma_f:\S^{2n-1}\to\S^{2n-1}\times\mathbb{CP}^n$ is
a minimal submanifold; see \cite{savas-4}. 
Let us point out here that $\mathbb{CP}^1$ is isometric with the
sphere $\S^2(1/2)$
of radius $1/2$. Composing the Hopf fibration with an isometry of $\S^3$ and the homothety
from $\S^2(1/2)$ into $\S^2$, we obtain again a
submersion with totally geodesic fibers. For simplicity, we call all these maps
{\em Hopf fibrations}.

Hopf vector fields are simultaneously harmonic and minimal unit vector fields.
Moreover,
they are the unique global minima of the energy functional restricted to unit
vector fields on the $3$-sphere; see \cite{brito}. Surprisingly, for spheres of dimension higher than 3,
Hopf vector fields are unstable critical points of the energy; see \cite{wiegmink,wood,medrano3}.
Analogously, Hopf
vector fields are absolute volume minimising in their homology classes in $U\S^3$. 
According to a result due to Gluck and Ziller \cite{gluck2}, the converse is also true. Again,
for spheres of dimensions greater than 3, the Hopf vector fields are not minimisers of the volume functional;
see \cite{johnson,pedersen}.

The classification of harmonic or minimal unit vector fields in the sphere $\S^{2n-1}$ as well as
the classification of harmonic or minimal maps $f:\S^{2n-1}\to\mathbb{CP}^n$
is an open problem. For example, in dimension 3, the Hopf vector fields are the only known examples. According to a conjecture of Eells,
any harmonic map $f:\S^3\to\S^2$ must be weakly conformal; see \cite[Note 10.4.1]{baird1} 
or \cite{wangg}. Therefore, an interesting question is whether a harmonic map $f:\S^3\to\S^2$
can be written as the composition of a Hopf fibration with a conformal map from $\S^2$ to
$\S^2$.

In this paper, we will investigate harmonic and minimal unit vector fields in $\S^3$ with totally geodesic integral curves.
According to a beautiful result of Gluck and Warner \cite{gluck3}, any great circle fibration of $\S^3$
generates a graphical surface in $\S^2\times\S^2$. Generically, at least locally,
the converse is also true; see Theorem \ref{localhopf}. In particular, the fibration is smooth and globally defined
in $\S^3$ if and only if the graphical surface is generated by a smooth strictly length decreasing map.
The idea to establish this duality is to consider the map which assigns each great circle of the 
foliation to the $2$-plane of $\R^4$
containing the circle. This mapping may be considered as a {\em Gauss map associated to the great circle fibration}.
Therefore,
there is a huge class of great circle fibrations of $\S^3$. Moreover, the corresponding quotient
maps are homotopic to the Hopf fibration and so they are homotopically non-trivial.
We prove that:
\begin{mythm}\label{thmD}
A unit vector field with totally geodesic integral curves defined in a saturated neighbourhood $V$ of $\S^3$ is minimal if and only if its
corresponding graphical surface is minimal in $\S^2\times\S^2$.
\end{mythm}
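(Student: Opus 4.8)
The plan is to exhibit the graphical surface as the base of a Riemannian submersion whose total space is the unit tangent bundle $U\S^3$ and for which the graph of $\zeta$ is precisely the preimage of the graphical surface. Identify the manifold of oriented great circles of $\S^3$ with the Grassmannian $\tilde G_2(\R^4)$ of oriented $2$-planes in $\R^4$; via the splitting $\Lambda^2\R^4=\Lambda^+\oplus\Lambda^-$ into self-dual and anti-self-dual parts, the set of decomposable unit $2$-vectors is $\S^2(1/\sqrt2)\times\S^2(1/\sqrt2)$ with its product round metric, which is $\S^2\times\S^2$ up to homothety and hence interchangeable with it as far as minimality is concerned. Let $\pi\colon U\S^3\to\tilde G_2(\R^4)$, $(q,w)\mapsto q\wedge w$. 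With this identification, the Gauss map of the fibration is $\gamma=\pi\circ\Gamma_\zeta$, where $\Gamma_\zeta(p)=(p,\zeta(p))$ is the graph of $\zeta$ in $U\S^3$; and since the integral curves of $\zeta$ are great circles, a direct fibrewise check shows $\Gamma_\zeta(V)=\pi^{-1}(\Sigma)$, where $\Sigma:=\gamma(V)$ is the graphical surface (this is where the hypothesis that $V$ be saturated is used). Finally I would recall the standard variational fact that a unit vector field is a critical point of $V$ among unit vector fields if and only if its graph is a minimal submanifold of $(U\S^3,g_S)$: the variation fields of section-variations are exactly the vertical fields for the bundle $U\S^3\to\S^3$ along $\Gamma_\zeta(V)$, and these together with $T\Gamma_\zeta(V)$ span $T(U\S^3)$ along the graph, so vanishing of the mean curvature vector is already forced by the first-variation formula applied only to section-variations. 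Thus it suffices to show $\pi^{-1}(\Sigma)$ is minimal in $(U\S^3,g_S)$ if and only if $\Sigma$ is minimal in $\tilde G_2(\R^4)$.

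The key step is that $\pi\colon(U\S^3,g_S)\to\tilde G_2(\R^4)$ is a Riemannian submersion with totally geodesic fibres. The fibre over $q\wedge w$ is the orbit of $(q,w)$ under rotation inside $\operatorname{span}(q,w)$, i.e. it is the canonical lift to $U\S^3$ of the great circle $t\mapsto\cos t\,q+\sin t\,w$; since canonical lifts of geodesics of $\S^3$ are geodesics of the Sasaki metric, the fibres are closed geodesics, in particular totally geodesic. For the submersion property, at $(q,w)$ decompose a tangent vector of $U\S^3$ into its $\pi$-vertical part, which equals the Sasaki-horizontal lift of the tautological direction $w=\zeta$, and a $\pi$-horizontal complement, which is spanned by the Sasaki-horizontal lifts of the directions $u\perp q,w$ together with the Sasaki-vertical vectors $(0,s)$, $s\perp q,w$. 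On such a horizontal vector $(u,s)$ one has $d\pi(u,s)=u\wedge w+q\wedge s$, and $|u\wedge w+q\wedge s|^2_{\Lambda^2}=|u|^2+|s|^2=\|(u,s)\|^2_{g_S}$, the cross term vanishing because $u,s\perp q,w$. Hence $d\pi$ is an isometry on the $\pi$-horizontal distribution and $\pi$ is a Riemannian submersion onto $\tilde G_2(\R^4)$ with the metric induced from $\Lambda^2\R^4$.

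To conclude I would invoke the general principle that, for a Riemannian submersion $\pi$ with totally geodesic fibres and a submanifold $\Sigma$ of the base, $\pi^{-1}(\Sigma)$ is minimal if and only if $\Sigma$ is: picking along $\pi^{-1}(\Sigma)$ an orthonormal frame consisting of horizontal lifts $X_i^h$ of an orthonormal frame $\{X_i\}$ of $T\Sigma$ and an orthonormal frame $\{V_j\}$ of the fibres, O'Neill's identities give $\nabla^{U\S^3}_{X_i^h}X_i^h=(\nabla^{\tilde G_2}_{X_i}X_i)^h$, while $\nabla^{U\S^3}_{V_j}V_j$ is tangent to the (totally geodesic) fibre and hence to $\pi^{-1}(\Sigma)$; tracing the second fundamental form gives $\vec H_{\pi^{-1}(\Sigma)}=(\vec H_\Sigma)^h$, and the horizontal lift is injective. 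Taking $\Sigma$ to be the graphical surface so that $\pi^{-1}(\Sigma)=\Gamma_\zeta(V)$, and combining with the reduction of the first paragraph, gives the theorem.

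The main obstacle is the Riemannian-submersion statement of the second paragraph. One must argue with the Sasaki metric on $U\S^3$, which is \emph{not} the metric it inherits as a submanifold of $\R^4\times\R^4$, so the $\pi$-vertical and $\pi$-horizontal distributions have to be located carefully inside the Sasaki splitting into $(U\S^3\to\S^3)$-horizontal and vertical parts before the length computation can be run; this is exactly the point at which the Gluck--Warner correspondence \cite{gluck3} acquires metric content. The remaining issues — keeping track of orientations, and the fact that $V$ and $\Sigma$ may be proper open pieces rather than closed manifolds — are harmless, since minimality is local and scale-invariant; and the same scheme, with the energy in place of the volume, proves the harmonic analogue in parallel.
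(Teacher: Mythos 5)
Your proposal is correct, and it reaches the conclusion by a genuinely different route from the paper's. Both arguments rest on the same two pillars: the fact (quoted as part (g) of the Proposition in Section 2, after Gil-Medrano and Llinares-Fuster \cite{medrano2}) that $\zeta$ is a critical point of the volume functional if and only if $\zeta(V)$ is a minimal submanifold of $(U\S^3,g_S)$, and the fact that the Stiefel bundle map $T(x,v)=x\wedge v$ is a Riemannian submersion onto $\mathbb{G}_2(\R^4)\simeq\S^2_-\times\S^2_+$. Where you diverge is in how the graph of $\zeta$ is compared with the graphical surface $\Sigma$. The paper works at the level of maps: it shows that $\pi:(V,\zeta^*g_S)\to(\S^2_-,G^*g_{\S^2_-\times\S^2_+})$ is a Riemannian submersion with $T\circ\zeta=G\circ\pi$, applies the composition formula for second fundamental forms to obtain $dT(H_\zeta)=H_G+dG(\tau_\pi)$ (Lemma \ref{minimalzminimalg}), and then separates the two terms on the right as the normal and tangential components along $\Sigma$. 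You instead work at the level of submanifolds: since $V$ is saturated and the integral curves are great circles, $\zeta(V)=T^{-1}(\Sigma)$; the fibres of $T$ are canonical lifts of great circles, hence closed geodesics of the Sasaki metric; and the preimage principle for Riemannian submersions with totally geodesic fibres gives $\vec{H}_{\zeta(V)}=\bigl(\vec{H}_\Sigma\bigr)^{hor}$, with the horizontal lift injective. Your route is shorter and makes transparent why the fibre direction contributes nothing, a point the paper handles by computing $A_\zeta(\zeta,\zeta)=0$ by hand; the paper's route yields the extra identity \eqref{Eq: relation mean curvatures}, hence also the harmonicity of the quotient map in the minimal case, and runs in parallel with its treatment of the harmonic problem. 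The individual steps you outline --- the identification $\zeta(V)=T^{-1}(\Sigma)$, the location of the $T$-horizontal space inside the Sasaki splitting, the length computation $|u\wedge w+q\wedge s|^2=|u|^2+|s|^2$ for $u,s\perp q,w$, the reduction of criticality to minimality of the graph via vertical variation fields, and the O'Neill trace --- all check out.
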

Following the terminology introduced by Baird in \cite{baird}, the {\em Gauss map of a submersion}
$f:M\to N$
between oriented Riemannian manifolds of dimensions $m$ and $n$, respectively, is the map
$\mathcal{G}:M\to\mathbb{G}_{m-n}(M)$ which
associates to each point $x\in M$ the tangent plane to the fiber of $f$ passing through 
$x$. Here, $\mathbb{G}_{m-n}(M)$ denotes the Grassmann bundle over $M$, whose
fiber at each point $x\in M$ is the Grassmannian of oriented $(m-n)$-planes in $T_xM$.
Observe that if $g:N\to N$ is a diffeomorphism, then the submersions $g\circ f$ and $f$ 
have the same Gauss map.
If $n=m-1$, then the kernel of $f$ is generated by a unit vector field. Hence, the Gauss map
$\mathcal{G}$ may be regarded as a section of the unit tangent bundle $UM$ of $M$ , i.e. a unit vector field of $M$. Our next result is an analogue of the classical theorem of Ruh
and Vilms \cite{ruh} in the case of great circle fibrations of $\S^3$ and their corresponding
quotient maps. More precisely, we derive the following theorem:

\begin{mythm}\label{thma}
Let $\zeta:V\subset\S^3\to U\S^3$ be a unit vector field whose integral curves are great circles defined in an open
saturated  neighbourhood
$V$ and $f:V\subset\S^3\to\S^2$ the corresponding quotient map. Then $f$ is a harmonic map if and
only if $\zeta:V\to U\S^3$ is a harmonic unit vector field.
\end{mythm}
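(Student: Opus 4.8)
The plan is to transport both conditions into the quaternionic model $\S^{3}=\{x\in\mathbb{H}:|x|=1\}$ and to compare, pointwise, the two Euler--Lagrange obstructions. By the duality behind Theorem~\ref{localhopf}, the pure-imaginary quaternion $x\,\overline{\zeta(x)}$ is constant along each great circle of the foliation, so (after the orientation choice identifying the target with the relevant factor) the quotient map is $f(x)=x\,\overline{\zeta(x)}\in\S^{2}\subset\operatorname{Im}\mathbb{H}$, equivalently $\zeta(x)=\overline{f(x)}\,x$. The key elementary observation is that, for fixed $x\in\S^{3}$, the map $p\mapsto p\,x$ is a linear isometry of $\operatorname{Im}\mathbb{H}$ onto $T_{x}\S^{3}$ sending $f(x)$ to $-\zeta(x)$, hence restricting to an isometry $\iota_{x}\colon T_{f(x)}\S^{2}\to\mathcal{H}_{x}$, where $\mathcal{H}=\zeta^{\perp}$ is the horizontal distribution. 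Since $\zeta$ is a harmonic unit vector field exactly when $\nabla^{*}\nabla\zeta=|\nabla\zeta|^{2}\zeta$, with $\nabla^{*}\nabla\zeta=-\sum_{a}(\nabla_{E_{a}}\nabla_{E_{a}}\zeta-\nabla_{\nabla_{E_{a}}E_{a}}\zeta)$ a section of $\zeta^{\perp}$, and $f$ is harmonic exactly when its tension field $\tau(f)$ vanishes, it suffices to prove that $\iota_{x}$ carries $\tau(f)$ to a nonzero multiple of $\nabla^{*}\nabla\zeta-|\nabla\zeta|^{2}\zeta$.

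The next step is to fix, about a point, a local orthonormal frame $\{E_{1},E_{2},E_{3}=\zeta\}$ with $E_{1},E_{2}$ spanning $\mathcal{H}$. Because the integral curves of $\zeta$ are geodesics we have $\nabla_{\zeta}\zeta=0$, and because $df(\zeta)=0$ the vertical direction contributes nothing to either second-order operator; both therefore reduce to horizontal sums, $\tau(f)=\sum_{i=1}^{2}\bigl(\nabla^{f}_{E_{i}}df(E_{i})-df((\nabla_{E_{i}}E_{i})^{\mathcal{H}})\bigr)$ and $\nabla^{*}\nabla\zeta=-\sum_{i=1}^{2}\bigl(\nabla_{E_{i}}\nabla_{E_{i}}\zeta-\nabla_{(\nabla_{E_{i}}E_{i})^{\mathcal{H}}}\zeta\bigr)$, where $\nabla^{f}$ is the pull-back connection on $f^{*}T\S^{2}$.

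The heart of the argument is to differentiate the identity $\zeta=\overline{f}\,x$ twice. For $X\in\mathcal{H}$, using $\langle D_{X}\zeta,x\rangle=-\langle\zeta,X\rangle=0$, one obtains the clean first-order formula $\nabla_{X}\zeta=\overline{df(X)}\,x+\overline{f}\,X$, i.e.\ $\iota_{x}(df(X))=\overline{f}\,X-\nabla_{X}\zeta$; in particular $\iota_{x}$ differs from an isometric identification of $df$ with $\nabla\zeta$ on $\mathcal{H}$ only by an explicit term linear in $X$. Differentiating once more, inserting the Gauss formulas $D_{E_{i}}E_{i}=\nabla_{E_{i}}E_{i}-x$ for $\S^{3}\subset\R^{4}$ and $D_{E_{i}}(df(E_{i}))=(\nabla^{f}_{E_{i}}df)(E_{i})+df(\nabla_{E_{i}}E_{i})-|df(E_{i})|^{2}f$ for $\S^{2}\subset\R^{3}$, and then projecting the ambient derivative back onto $T\S^{3}$, one expresses $\nabla^{*}\nabla\zeta$ as $\iota_{x}(\tau(f))$ (up to sign) plus a sum of curvature and metric-distortion terms. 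The claim, verified by a direct computation with the quaternion relations $\overline{f}^{\,2}=-1$, $\overline{fx}=\bar x\,\overline{f}$, $\overline{f}\,x=\zeta$, etc., is that these remaining terms add up exactly to $-|\nabla\zeta|^{2}\zeta$, so that $\nabla^{*}\nabla\zeta-|\nabla\zeta|^{2}\zeta=\pm\,\iota_{x}\bigl(\tau(f)\bigr)$; as $\iota_{x}$ is an isomorphism, one side vanishes if and only if the other does, which is the assertion.

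The main obstacle is precisely this last cancellation. Since $f$ is \emph{not} a Riemannian submersion --- only the Hopf fibrations are horizontally conformal --- the differential $df|_{\mathcal{H}}$ genuinely distorts the metric, so $|df|^{2}\neq|\nabla\zeta|^{2}$ in general (on the Hopf fibration $|df|^{2}=4$ whereas $|\nabla\zeta|^{2}=2$), and one cannot match the two Laplacians term by term. One must check that the $\S^{2}$-curvature contribution $-|df(E_{i})|^{2}f$, the $\S^{3}$-curvature contribution hidden in $D_{E_{i}}E_{i}=\nabla_{E_{i}}E_{i}-x$, and the cross terms $\langle df(E_{i})\,x,\,f\,E_{i}\rangle$ produced by expanding $|\nabla_{E_{i}}\zeta|^{2}$ combine into the single scalar $|\nabla\zeta|^{2}$. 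Carrying out all of the algebra through the expression $\overline{f}\,x$ --- so that every covariant derivative of $\zeta$ is read off from a derivative of $f$ by right multiplication by $x$ followed by the projection $v\mapsto v-\langle v,x\rangle x$, whose error is itself controlled by the first-order formula above --- keeps this bookkeeping manageable; alternatively one could route the argument through the graphical surface in $\S^{2}\times\S^{2}$ and the circle of ideas behind Theorem~\ref{thmD}, but the direct quaternionic computation is the most economical.
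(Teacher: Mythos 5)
Your proposal is correct and follows essentially the same route as the paper: writing $\zeta(x)=\overline{f(x)}\cdot x$ (the mirror of the paper's convention $\zeta(p)=p\cdot f(p)$), differentiating twice, and showing that the pointwise isometry given by quaternion multiplication carries $\tau_f$ onto $\Delta\zeta+|\nabla\zeta|^2\zeta$ is exactly the content of Proposition \ref{lapetensf} and formula \eqref{Eq: Laplaciane1}. The one cancellation you assert rather than carry out --- that the leftover zeroth-order terms sum to $-|\nabla\zeta|^2\zeta$ --- is precisely what the paper's computation \eqref{Eq: differentiation3}--\eqref{Eq: differentiation5} verifies, so your plan is sound as stated.
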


In the sequel we prove the following result which generalises a previous one in \cite{han}.

\begin{mythm}\label{mythmb}
A harmonic unit vector field $\zeta$ on $\S^3$, whose integral curves are great circles, is a Hopf vector field and the corresponding quotient map $f:\S^3\to\S^2$ is a Hopf fibration.
\end{mythm}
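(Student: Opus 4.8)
The plan is to translate the statement into a rigidity property of the Gluck--Warner data of the fibration and then to extract that rigidity from harmonicity through an integral identity. By Theorem~\ref{thma}, the hypothesis that $\zeta$ is a harmonic unit vector field on $\S^3$ is equivalent to the quotient map $f\colon\S^3\to\S^2$ being a harmonic map. Since $\zeta$ is globally defined with great circle integral curves, the Gluck--Warner correspondence recalled before Theorem~\ref{localhopf} presents the fibration as an embedded two-sphere $\Sigma\subset\S^2\times\S^2$, namely the graph of a smooth strictly length decreasing map $\phi\colon\S^2\to\S^2$; here $\Sigma$ is the image of the Gauss map of the fibration, which is constant along fibers and therefore descends to the base. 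A Hopf fibration corresponds precisely to the case in which $\phi$ is constant, so that $\Sigma=\S^2\times\{p\}$ is a totally geodesic slice, and conversely such a slice is readily checked to be the graphical surface of a Hopf vector field. Hence it suffices to prove that harmonicity of $\zeta$ forces $\phi$ to be constant.

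I would begin with the topological constraint. As $\phi$ is strictly length decreasing we have $|\phi^{\ast}\omega_{\S^2}|<\omega_{\S^2}$ pointwise on $\S^2$, whence
\[
4\pi\,|\deg\phi|=\Bigl|\int_{\S^2}\phi^{\ast}\omega_{\S^2}\Bigr|<\int_{\S^2}\omega_{\S^2}=4\pi ,
\]
so $\deg\phi=0$ and $\phi$ is null-homotopic. Thus what remains is to show that a harmonic great circle fibration whose Gluck--Warner map has degree zero must have constant Gluck--Warner map.

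Next I would run the Euler--Lagrange equation through the Gluck--Warner parametrization. Using coordinates on a saturated piece of $\S^3$ given by a point of the base $\S^2$ together with an arc-length coordinate along the corresponding fiber great circle, the Sasaki energy density of $\zeta$ and its tension field become explicit first- and second-order expressions in $\phi$, in which the relation $\nabla_{\zeta}\zeta=0$ (the fibers being great circles) collapses many terms, exactly as in the computations underlying Theorems~\ref{thmD} and \ref{thma}. Integrating the harmonicity identity first along the fibers and then over the base, and combining it with a Bochner/divergence identity on the closed manifold $\S^3$, I expect to arrive at an identity of the form
\[
\int_{\S^2}Q\bigl(\nabla\phi,\nabla^{2}\phi\bigr)\,\omega_{\S^2}=c\,\deg\phi ,
\]
where $Q\ge 0$ is a quadratic expression in the derivatives of $\phi$ whose vanishing forces $\nabla\phi\equiv0$, the nonnegativity of $Q$ being exactly where the strict length decreasing hypothesis enters. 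Since $\deg\phi=0$, this gives $\nabla\phi\equiv0$, so $\phi$ is constant, $\zeta$ is a Hopf vector field, and $f$ a Hopf fibration. A possible shortcut to the same end is to show that $\phi$ itself satisfies a harmonic-map type system inherited from the harmonicity of $f$, and then to invoke the fact that a harmonic map $\S^2\to\S^2$ is $\pm$holomorphic together with $\deg\phi=0$ to conclude $\phi$ is constant; the delicate point being that $f$ need not be a Riemannian submersion, so this passage is not automatic and requires the great circle structure of the fibers in an essential way.

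The main obstacle is this passage from a critical-point condition to rigidity. Brito's theorem \cite{brito} identifies the Hopf fields as the unique global minima of the energy on $\S^3$, but a harmonic unit vector field is merely a critical point, so that result cannot be quoted directly; the real content is a sharp integral inequality, valid within the restricted class of geodesic unit fields and with its equality case characterized, whose first-variation form still isolates the Hopf configuration. Controlling the sign of the remainder $Q$ using only the strict length decreasing property of $\phi$, and handling the closed set where $d\phi$ degenerates (where unique continuation or analyticity of harmonic maps may be needed), is where the effort of the proof is concentrated, and it is precisely the removal of extra hypotheses at this step that yields the improvement over \cite{han}.
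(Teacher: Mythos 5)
Your reduction of the problem is sound as far as it goes: by Theorem~\ref{thma} harmonicity of $\zeta$ is equivalent to harmonicity of $f$, the Gluck--Warner correspondence encodes the fibration as the graph of a strictly length decreasing map, and the Hopf case is exactly the constant-map case; the degree-zero observation is also correct. But the proposal stops short of the actual proof at precisely the point you yourself identify as "where the effort is concentrated." The claimed integral identity $\int_{\S^2}Q(\nabla\phi,\nabla^2\phi)\,\omega_{\S^2}=c\deg\phi$ with $Q\ge 0$ is never derived, there is no argument for the sign of $Q$, and the alternative route (showing $\phi$ inherits a harmonic-map equation and invoking $\pm$holomorphy of harmonic maps $\S^2\to\S^2$) is flagged by you as not automatic and is not carried out either. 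As written, the central analytic step is a conjecture, so this is a genuine gap rather than a complete proof.

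For comparison, the paper's mechanism is different and entirely local-to-global on $\S^3$ rather than an integral identity on the base. Writing $v=\trace\varphi$ and $u=\trace(\varphi\circ J)$ for the second fundamental form $\varphi$ of the horizontal distribution, one combines the Riccati ODE along the fibers \eqref{ric1}--\eqref{ric2}, the Codazzi equations \eqref{Eq: condition 1 Codazzi}--\eqref{Eq: condition 2 Codazzi}, and the two horizontal components of the harmonic equation $\Delta\zeta+|\nabla\zeta|^2\zeta=0$ to obtain the Cauchy--Riemann type system \eqref{cauchyriemann}. A direct computation then gives $\Delta v=v\bigl(|\varphi|^2-2\det\varphi\bigr)$, hence $\Delta v^2\ge 0$; the maximum principle on the closed manifold $\S^3$ forces $v$ to be constant, and since $v=-\div\zeta$, Stokes' theorem gives $v\equiv 0$. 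The Riccati equation then yields $\det\varphi=1$, which by Lemma~\ref{gw}(c) makes the Gluck--Warner map conformal, hence minimal, and the Bernstein theorem for strictly length decreasing minimal maps (\cite[Theorem A]{savas1}) forces it to be constant. If you want to salvage your outline, the missing ingredient is exactly this intermediate rigidity statement $\trace\varphi\equiv 0$: once the field is known to be divergence free, the conclusion follows from Corollary~\ref{gluckgu}. Without it, neither the sign of your $Q$ nor the holomorphy of $\phi$ can be extracted from harmonicity alone.
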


The main ingredient in the proof of Theorem \ref{mythmb} is the (formal) second fundamental form $\varphi$ of the {\em horizontal distribution}, i.e. the perpendicular distribution to the line bundle
spanned by the vector field $\zeta$. It turns out that 
$\varphi$ satisfies a Ricatti type ODE along the
integral curves of $\zeta$, and on
the horizontal distribution it satisfies a Codazzi type system of PDEs; see Section \ref{sec2}. Using the harmonicity of the vector
field, we arrive at the conclusion that the squared mean curvature $(\trace\varphi)^2$
of the horizontal distribution is a subharmonic function.
Then, from the maximum principle and the Bernstein type theorem for strictly length decreasing minimal maps in \cite[Theorem A]{savas1},
we obtain that $\varphi$ is an orthogonal complex structure.
This leads us to the conclusion that $\zeta$ is a Hopf vector field.

Next we turn out attention to minimal unit vector fields on $\S^3$ and prove the following result:

\begin{mythm}\label{mythmc}
A minimal unit vector field $\zeta$ on $\S^3$, whose integral curves are great circles, is a Hopf vector field and the corresponding quotient map $f:\S^3\to\S^2$ is a Hopf fibration.
\end{mythm}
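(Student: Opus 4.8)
The plan is to run the minimal counterpart of the argument behind Theorem~\ref{mythmb}, with Theorem~\ref{thmD} playing the role that Theorem~\ref{thma} played there; in fact the minimal case is more direct, since no maximum principle step is needed. First I would record the geometric setup. Because $\zeta$ is globally defined and smooth on $\S^3$ and its integral curves are great circles (hence totally geodesic), the Gluck--Warner correspondence (Theorem~\ref{localhopf} and \cite{gluck3}) identifies the graphical surface $\Sigma$ attached to the great circle fibration with the graph $\varGamma_{h}$ of a smooth strictly length decreasing map $h\colon\S^2\to\S^2$ between round $2$-spheres; here one of the two factors of $\S^2\times\S^2$ is the base of the fibration, i.e.\ the sphere of self-dual (resp.\ anti-self-dual) components of the oriented $2$-planes of $\real 4$ carrying the great circles. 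Since the domain $\S^2$ is compact, the operator norm of $dh$ is bounded away from $1$, so $h$ is strictly length decreasing in the sense demanded by \cite{savas1}.

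Next, by hypothesis $\zeta$ is a minimal unit vector field, so Theorem~\ref{thmD} yields that $\Sigma=\varGamma_{h}$ is a minimal surface of $\S^2\times\S^2$. At this point I would invoke the Bernstein type theorem for strictly length decreasing minimal maps, \cite[Theorem~A]{savas1}: a strictly length decreasing map $h\colon\S^2\to\S^2$ whose graph is minimal in $\S^2\times\S^2$ is necessarily constant. The curvature hypotheses needed for that theorem are automatic in our situation, as both factors are round spheres of positive constant curvature.

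It then remains to translate ``$h$ constant'' into the assertion about $\zeta$. If $h\equiv p$, then $\Sigma=\S^2\times\{p\}$, which means that every oriented $2$-plane of $\real 4$ spanned by a great circle of the foliation has the same anti-self-dual (resp.\ self-dual) component $p$. Fixing this component is equivalent to fixing an orthogonal complex structure $\mathcal{J}$ on $\real 4=\complex 2$, and the $2$-planes with that fixed component are exactly the $\mathcal{J}$-complex lines; hence the foliation is the foliation of $\S^3$ by $\mathcal{J}$-complex great circles. Consequently the quotient map $f\colon\S^3\to\S^2$ is a Hopf fibration and $\zeta=\mathcal{J}\nu$ is a Hopf vector field --- equivalently, in the notation of Section~\ref{sec2}, the second fundamental form $\varphi$ of the horizontal distribution is the orthogonal complex structure induced by $\mathcal{J}$ on the horizontal bundle, which is precisely the characterisation of Hopf vector fields used in Theorem~\ref{mythmb}. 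This finishes the proof.

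The substantive content is carried entirely by Theorem~\ref{thmD} and by the Bernstein theorem of \cite{savas1}; granted those, the remaining work is bookkeeping. The one place that genuinely needs care is verifying that the Gluck--Warner map is \emph{strictly} --- and, by compactness of $\S^2$, uniformly strictly --- length decreasing, so that \cite[Theorem~A]{savas1} applies verbatim, and then being precise about the dictionary between ``constant Gauss data in one $\S^2$-factor'' and ``$\zeta$ is a Hopf vector field''. I expect this last translation, rather than any analytic estimate, to be the main point to pin down.
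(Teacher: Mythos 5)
Your proposal is correct and follows essentially the same route as the paper: the paper also deduces this theorem directly from Theorem~\ref{thmD} combined with the Bernstein type theorem of \cite{savas1} applied to the strictly length decreasing Gluck--Warner graph, and then identifies the constant-map case with the Hopf fibration via Theorem~\ref{localhopf}(c). The only point you elaborate beyond the paper's (very terse) argument is the compactness remark ensuring uniform strict length decrease and the explicit dictionary between a constant $g$ and an orthogonal complex structure, both of which are consistent with the paper's setup.
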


In the proof of Theorem \ref{mythmc}, we use the geometric 
setup of Gluck and Warner \cite{gluck3} to show that if the mean curvature of
$\zeta:(M,\zeta^*g_S)\to(UM,g_S)$ vanishes, then the corresponding graphical surface $G$ in
$\S^2\times\S^2$ is minimal. Again from the Bernstein type theorem
in \cite[Theorem A]{savas1} we obtain that $G$ is totally geodesic, from where we deduce that $\zeta$
is a Hopf vector field.

\section{Geometry of the tangent bundle}
\subsection{The unit tangent bundle} Let us collect here some basic facts about
the geometry of the tangent bundle $TM$ of a manifold $(M,g)$. There is a natural Riemannian
metric on $TM$ whose construction goes back to the seminal paper
of Sasaki \cite{sasaki}. He uses the metric $g$
to construct a metric $g_S$ on $TM$, which nowadays is called the {\em Sasaki metric}.
The construction of $g_S$ is based on a
natural splitting of the tangent bundle $TTM$ of $TM$ into a vertical
and horizontal sub-bundle by means of the Levi-Civita connection $\nabla$
of $g$. Let us briefly recall the construction of the Sasaki metric, following closely the material in
\cite{dombrowski,boeckx,blair}.

Let $(M,g)$ be an $m$-dimensional Riemannian manifold and denote by $\pi:TM\to M$ the canonical projection from
the tangent bundle $TM$ on $M$. The kernel of $d\pi$ gives rise to the {\em vertical distribution}
$\mathscr{V}$ of $TM$, which is smooth and $m$-dimensional, i.e.
$$
\mathscr{V}_{(x,v)}=\ker d\pi_{(x,v)}.
$$
We will use the Levi-Civita connection
$\nabla$ of $g$ to introduce the horizontal distribution on the tangent bundle
 $TM$. This can be
achieved through the {\em connection map} $K:TTM \to TM$ which is defined in the
following way: For fixed vector $X \in T_{(x, v)}TM$ consider a smooth curve
$\gamma(s)=(\alpha(s), v(s))$, $s\in(-\varepsilon,\varepsilon)$,
 in the tangent bundle $TM$ such that
\begin{equation}\label{px}
\gamma(0)=(\alpha(0),v(0))=(x,v)\quad\text{and}\quad\gamma'(0)
=(\alpha'(0),\nabla_{\alpha'(0)}v)=X
\end{equation}
and define
\begin{equation*}
K(X)=K(\gamma'(0))=\nabla_{\alpha'(0)}v.
\end{equation*}
Then we define the {\em horizontal distribution} as the kernel of the connection map $K$, i.e.
$$
\mathscr{H}_{(x,v)}=\ker K_{(x,v)}.
$$
From \eqref{px} we can readily see that $\mathscr{H}_{(x,v)}$ is $m$-dimensional and if $X\in\mathscr{V}_{(x,v)}\cap\mathscr{H}_{(x,v)}$,
then $X=0$. As a matter of fact, at any point $(x,v)\in TM$, we have the following decomposition
$$
T_{(x,v)}TM=\mathscr{V}_{(x,v)}\oplus\mathscr{H}_{(x,v)}.
$$
The Sasaki metric $g_S$ is a Riemannian metric which makes the above sum orthogonal, i.e.
\begin{equation}\label{Eq: Sasaki metric}
g_S(X,Y)=g(d\pi(X),d\pi(Y))+g(K(X),K(Y))
\end{equation}
for any $X,Y\in TTM$. Observe now that the projection map $\pi: (TM,
g_{S})\to (M, g)$ becomes a Riemannian submersion. Any vector $X\in T_{(x,v)}TM$ can be written in the form
$$
X=X^\v+X^\h,
$$
where $X^\v$ stands for the vertical and $X^\h$ the horizontal component of $X$.
Moreover, for any $w\in T_xM$, there exists a
unique
$w^{\h}_{(x,v)}\in \mathscr{H}_{(x,v)}$ and a unique $w^{\v}_{(x,v)}\in \mathscr{V}_{(x,v)}$ such that
$$
d\pi_{(x,v)}(w^{\h}_{(x,v)})=w \quad\text{and}\quad K_{(x,v)}(w^\v_{(x,v)})=w.
$$
In this case, the vector $w^{\v}$ is called the {\em vertical lift} and $w^\h$ the {\em horizontal lift}
of $w$. The horizontal lift of a vector field $w \in \mathfrak{X}(M)$ is
the vector field $w^\h\in\mathfrak{X}(TM)$ which value at a point $(x,v)$ is the
horizontal lift of $w_x$ to $(x, v)$. The vertical lift $w^\v$ of the vector field $w$ is
defined similarly. There is also a natural complex structure $J$ on $TTM$, i.e. define
$$
Jw^\h=w^\v \quad\text{and}\quad Jw^\v=-w^\h,
$$
for any $w\in\mathfrak{X}(M)$.

Consider now the {\em unit tangent bundle} $UM$ of $M$, i.e.
$
UM=\big\{(x,v)\in TM:g(v,v)=1\big\}.
$
The unit tangent bundle $UM$ is an embedded hypersurface of $TM$ and we can equip it
with the induced from $g_S$ Riemannian metric. One can readily check that the vector field
$\eta_{(x, v)}=v^{\v}_{(x,v)}$ is
a unit normal of $UM$. Note that if $w\in\mathfrak{X}(M)$, then
$$
g_S\big(w^\h_{(x,v)},\eta_{(x,v)}\big)=g_S\big(w^\h_{(x,v)},v^{\v}_{(x,v)}\big)=0.
$$
Consequently, the horizontal lift $w^{\h}$ of a vector field $w\in\mathfrak{X}(M)$ is always tangent
to $UM$. On the other hand, the vertical lift $w^{\v}$ is not necessarily tangent to $UM$. Following
the terminology introduced by Boeckx and Vanhecke
\cite{boeckx} we call {\em tangential lift} $w^{tan}$ of $w$ the tangential to $UM$ component of
$w^{\v}$, i.e.
\begin{equation*}
w^{tan}_{(x, v)}=w^{\v}_{(x,v)}-g_S\big(w^{\v}_{(x,v)},v^{\v}_{(x,v)}\big)\eta_{(x,v)}
=w^{\v}_{(x, v)}-g(w, v)\eta_{(x, v)}.
\end{equation*}
At the point $(x, v)\in UM$, the tangent space of $UM$ can be written as
\begin{equation*}
T_{(x, v)}UM=\big\{w_1^{hor}+w_2^{tan}: w_1, w_2 \in T_{x}M\big\}
=\big\{w_1^{hor}+w_2^{ver}: w_1, w_2 \in T_{x}M,\, g(w_2, v)=0\big\}.
\end{equation*}

On  the unit tangent bundle $UM$ there is an analogue of the Hopf vector field defined from the complex structure
of $TM$. More precisely, the vector field $\xi$ given by
$$\xi_{(x, v)}=-J(\eta_{(x,v)})=v^{\h}_{(x,v)},$$
for $(x, v)\in UM$, is unit, tangent along the unit tangent bundle $UM$ and is called the {\em geodesic flow vector field}.

The next formula will be used later and can be found in \cite[page 176]{blair} or in \cite[page 82]{boeckx}.
\begin{lemma}\label{boeckx}
For any $X,Y\in\mathfrak{X}(M)$, the following formula hold
\begin{equation*}
\nabla^{g_{S}}_{X^{\h}}Y^{\h}=(\nabla_{X}Y)^{hor}-\tfrac{1}{2}\big(\rim(X, Y)v \big)^{tan},
\end{equation*}
where $\rim$ stands for the Riemann curvature operator of the metric $g$.
\end{lemma}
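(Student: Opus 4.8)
The plan is to compute $\nabla^{g_S}_{X^{\h}}Y^{\h}$ on $(TM,g_S)$ directly from the Koszul formula and then to observe that the resulting vector field is already tangent to the hypersurface $UM$, so that passing to $UM$ via the Gauss formula (with unit normal $\eta=v^{\v}$) costs nothing. Recall the Koszul identity: for $A,B,C\in\mathfrak{X}(TM)$,
\begin{multline*}
2g_S(\nabla^{g_S}_A B,C)=A\,g_S(B,C)+B\,g_S(A,C)-C\,g_S(A,B)\\
+g_S([A,B],C)-g_S([A,C],B)-g_S([B,C],A).
\end{multline*}
I would first record the elementary facts, all immediate from \eqref{Eq: Sasaki metric}: $g_S(X^{\h},Y^{\h})=g(X,Y)\circ\pi$, $g_S(X^{\h},Y^{\v})=0$, $g_S(X^{\v},Y^{\v})=g(X,Y)\circ\pi$, that a horizontal lift acts on a pulled-back function by $W^{\h}(h\circ\pi)=(Wh)\circ\pi$, and that a vertical lift annihilates pulled-back functions.

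The second ingredient is the set of bracket relations among lifted fields: at $(x,v)\in TM$,
\[
[X^{\h},Y^{\h}]=[X,Y]^{\h}-\big(\rim(X,Y)v\big)^{\v},\qquad [X^{\h},Y^{\v}]=(\nabla_X Y)^{\v},\qquad [X^{\v},Y^{\v}]=0.
\]
The first of these is the only point that is not purely formal: applying $d\pi$ shows the horizontal part of $[X^{\h},Y^{\h}]$ is $[X,Y]^{\h}$, while applying the connection map $K$ returns (with the appropriate curvature sign convention) $-\rim(X,Y)v$, which identifies the vertical part. I would derive this from the definition of $\mathscr{H}$ as $\ker K$, differentiating the parallel-transport description of the horizontal lift.

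With these in hand the computation splits according to the type of the test field $C$. If $C=Z^{\h}$, every term of the Koszul formula carrying a vertical factor drops out and the identity collapses to the Koszul formula for the Levi-Civita connection of $(M,g)$ evaluated downstairs, giving $g_S(\nabla^{g_S}_{X^{\h}}Y^{\h},Z^{\h})=g(\nabla_X Y,Z)\circ\pi$; hence the horizontal component of $\nabla^{g_S}_{X^{\h}}Y^{\h}$ equals $(\nabla_X Y)^{\h}$. If $C=Z^{\v}$, the first three terms vanish (two are identically zero, the third because a vertical field kills pulled-back functions), and among the bracket terms only $g_S([X^{\h},Y^{\h}],Z^{\v})=-g\big(\rim(X,Y)v,Z\big)$ survives; thus the vertical component of $\nabla^{g_S}_{X^{\h}}Y^{\h}$ equals $-\tfrac12\big(\rim(X,Y)v\big)^{\v}$. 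Combining, on $TM$ we obtain $\nabla^{g_S}_{X^{\h}}Y^{\h}=(\nabla_X Y)^{\h}-\tfrac12\big(\rim(X,Y)v\big)^{\v}$.

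Finally I pass to $UM$. Since $\rim(X,Y)$ is skew-symmetric we have $g(\rim(X,Y)v,v)=0$, so $\big(\rim(X,Y)v\big)^{\v}$ is already tangent to $UM$ and coincides with $\big(\rim(X,Y)v\big)^{tan}$; moreover $(\nabla_X Y)^{\h}=(\nabla_X Y)^{hor}$ is tangent to $UM$ as noted in the text. Hence the vector computed on $TM$ lies in $TUM$ along $UM$, its component orthogonal to $\eta$ is itself, and the Gauss formula yields precisely the stated identity for the Levi-Civita connection of $(UM,g_S)$. The only genuine obstacle in this scheme is pinning down the curvature bracket identity $[X^{\h},Y^{\h}]^{\v}=-\big(\rim(X,Y)v\big)^{\v}$ with the correct sign convention; everything else is routine bookkeeping with the Koszul formula and the metric identities above.
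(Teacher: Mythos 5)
Your argument is correct, but note that the paper does not prove this lemma at all: it simply quotes the formula from Blair's book and from Boeckx--Vanhecke, so there is no ``paper proof'' to match. What you propose is precisely the standard derivation found in those references: compute the Levi-Civita connection of $(TM,g_S)$ via the Koszul formula, testing against horizontal and vertical lifts separately, and then observe that the resulting vector $(\nabla_XY)^{hor}-\tfrac12\big(R_M(X,Y)v\big)^{ver}$ is already tangent to $UM$ along $UM$ (the horizontal part always is, and the vertical part because $g(R_M(X,Y)v,v)=0$ by skew-symmetry), so the second fundamental form of the hypersurface contributes nothing and the induced connection gives the same expression with $(\,\cdot\,)^{ver}$ replaced by $(\,\cdot\,)^{tan}$. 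All the bookkeeping you describe checks out: the first three Koszul terms and the two mixed bracket terms vanish when tested against $Z^{ver}$, and the computation against $Z^{hor}$ collapses to the Koszul formula downstairs. The one place where the content actually sits is exactly the one you flag, namely the bracket identity $[X^{hor},Y^{hor}]=[X,Y]^{hor}-\big(R_M(X,Y)v\big)^{ver}$; in a complete write-up this must be proved (e.g.\ by Dombrowski's computation in local coordinates, or by differentiating the parallel-transport description of horizontal lifts as you suggest) and the sign must be checked against the curvature convention $R_M(X,Y)=\nabla_X\nabla_Y-\nabla_Y\nabla_X-\nabla_{[X,Y]}$ used implicitly in the paper, since an opposite convention would flip the sign in the lemma. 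With that identity supplied, your proof is complete.
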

\subsection{Critical points of the energy and volume functional}

A unit vector field $\zeta$ on $(M, g)$ can be regarded as the embedding
$\zeta: M\to UM$ given by $\zeta(x)=(x, \zeta_{x}).$
Let us collect in the proposition below the Euler-Lagrange equations for the critical points of the energy
and volume functional as well as some useful identities. For the proofs we refer to \cite{wood,wiegmink,medrano2,yampolsky}.

\begin{proposition}
Let $\zeta:M\to UM$ be a unit vector field, which we view as a mapping of the form
$\zeta(x)=(x,\zeta_x),$
for any $x\in M$.
\begin{enumerate}[\rm(a)]
\item
The differential of $\zeta$ is given by
\begin{equation}\label{Eq: differential of zeta}
d\zeta(X)=X^{hor}+(\nabla_{X}\zeta)^{tan},
\end{equation}
for all vector fields $X\in\mathfrak{X}(M)$. Moreover, we have that
$$(\nabla_{X}\zeta)^{tan}=(\nabla_{X}\zeta)^{ver}.$$
for all vector fields $X\in\mathfrak{X}(M)$.
\medskip
\item
The pull-back via $\zeta$ Riemannian metric $\zeta^{*}g_{S}$ on $M$ satisfies
\begin{equation}\label{Eq: pull back metric Sasaki}
(\zeta^{*}g_{S})(X, Y)=g_{S}(d\zeta(X), d\zeta(Y))=g(X, Y)+g(\nabla_{X}\zeta, \nabla_{Y}\zeta),
\end{equation}
for all $X,Y\in\mathfrak{X}(M)$.
\medskip
\item It holds that $g=\zeta^*g_S$ if and only if the vector field $\zeta$ is
parallel.
\medskip
\item
The vector field $\zeta$ is a critical points of the energy functional with respect to variations through nearby unit vector fields if and only if
$$
\Delta\zeta+|\nabla\zeta|^2\zeta=0
$$
where $\Delta$ is the rough Laplacian.
\medskip
\item
Consider the $(1,1)$-tensor $\varphi$ on $M$ given by the formula
$$
\varphi(X)=-\nabla_X\zeta
$$
for any $X\in\mathfrak{X}(M)$. Then:
\begin{enumerate}
\medskip
\item[{\rm (e$_1$)}]
At any $x\in M$, there exist orthonormal frames
$\{v_0,v_1,\dots,v_{m-1}\}$ and $\{\beta_1,\dots\beta_{m-1}\}$ with respect to $g$ such that
$$
\varphi(v_0)=0,\,\,\phi(v_1)=\lambda_1\beta_1,\,\,\dots,\,\,\varphi(v_{m-1})=\lambda_{m-1}\beta_{m-1},
$$
where
$$\lambda_1\ge\dots\ge\lambda_{m-1}\ge 0.$$ The numbers $\lambda_i$, $i\in\{1,\dots,m\}$,
are the singular values of $\varphi$ at the point $x\in M$.
\medskip
\item[{\rm (e$_2$)}]
With the above ordering, the singular values give rise to
continuous  functions on $M$. Moreover, they are smooth on an open and dense subset of $M$.
In particular, they are smooth on open subsets where the corresponding multiplicities are constant
and the corresponding  eigenspaces are smooth distributions.
\end{enumerate}
\medskip
\item
The vectors
$$
e_0=v_0,\,\,e_1=\frac{v_1}{\sqrt{1+\lambda_1^2}},\,\,\dots,\,\,
e_{m-1}=\frac{v_{m-1}}{\sqrt{1+\lambda_{m-1}^2}},
$$
form an orthonormal basis at the point $x\in M$ with respect to the induced Riemannian metric
$\zeta^*g_S$. Moreover, the vectors
$$\xi_1=\frac{-\lambda_1e_1^\h+\beta_1^\v}{\sqrt{1+\lambda_1^2}},\,\,\dots,\,\,
\xi_{m-1}=\frac{-\lambda_{m-1}e_{m-1}^\h+\beta_{m-1}^\v}{\sqrt{1+\lambda_{m-1}^2}},
$$
form an orthonormal basis of the normal bundle of the embedding $\zeta$ at $\zeta(x)$.
\medskip
\item
The vector field $\zeta$ is a critical point of the volume functional if and only if the isometric embedding
$\zeta:(M,\zeta^*g_S)\to (UM,g_S)$ is minimal. 
\medskip
\item
The mean curvature vector $H_\zeta$ of the embedding $\zeta$ is given by the formula
$$
H_\zeta=\sum_{\alpha=1}^{m-1}\frac{1}{\sqrt{1+\lambda_\alpha^2}}\left\{\langle \nabla^2_{e_0,e_0}\zeta,\beta_\alpha\rangle
+\sum_{i=1}^{m-1}\frac{\langle \nabla^2_{e_i,e_i}\zeta,\beta_\alpha\rangle-\lambda_\alpha\lambda_i
R_M(e_\alpha,e_i,\zeta,\beta_i)}{1+\lambda^2_{i}}\right\}\xi_\alpha,
$$
where $R_M$ stands for the Riemannian curvature operator of $g$ and
$$
\nabla^2_{v,w}\zeta=\nabla_v\nabla_w\zeta-\nabla_{\nabla_vw}\zeta, \quad v,w\in\mathfrak{X}(M),
$$
is the Hessian of the vector field $\zeta$; compare with Corollary 1(b) in \cite{savas-4}. 
\end{enumerate}
\end{proposition}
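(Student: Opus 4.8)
The plan is to reduce each assertion to a direct computation in the Sasaki geometry recalled above, together with a first-variation argument for the energy and elementary linear algebra applied fibrewise to $\varphi$. For (a), pick a curve $\alpha$ in $M$ with $\alpha'(0)=X$ and apply the definitions of $d\zeta$, $d\pi$ and the connection map $K$ to the curve $s\mapsto(\alpha(s),\zeta_{\alpha(s)})$ in $TM$; comparison with \eqref{px} gives $d\pi(d\zeta(X))=X$ and $K(d\zeta(X))=\nabla_X\zeta$, so that $d\zeta(X)=X^{hor}+(\nabla_X\zeta)^{ver}$. Differentiating $g(\zeta,\zeta)\equiv 1$ shows $g(\nabla_X\zeta,\zeta)=0$, hence $(\nabla_X\zeta)^{ver}$ is already tangent to $UM$ and coincides with its tangential lift. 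Then (b) follows by substituting \eqref{Eq: differential of zeta} into the defining formula \eqref{Eq: Sasaki metric} of $g_S$ and using the $g_S$-orthogonality of horizontal and vertical vectors, while (c) is immediate from \eqref{Eq: pull back metric Sasaki}, since $\zeta^{*}g_S=g$ forces $|\nabla_X\zeta|^{2}=0$ for every $X$.

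For (d) I would take a smooth variation $\{\zeta_t\}$ through unit vector fields with variation field $W=\partial_t\zeta_t|_{t=0}$; differentiating $g(\zeta_t,\zeta_t)\equiv 1$ yields the pointwise constraint $g(W,\zeta)=0$, which is the only restriction on $W$. Differentiating $E(\zeta_t)$ under the integral sign and integrating by parts gives $\frac{d}{dt}E(\zeta_t)\big|_{t=0}=-\int_M g(\Delta\zeta,W)\,{\rm dvol}_g$ with $\Delta=\operatorname{tr}\nabla^{2}$, so $\zeta$ is critical exactly when $\Delta\zeta$ is pointwise proportional to $\zeta$. Pairing $\Delta\zeta$ with $\zeta$ and using $g(\nabla\zeta,\zeta)=0$ identifies the proportionality factor as $-|\nabla\zeta|^{2}$, which is the stated Euler--Lagrange equation.

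Items (e) and (f) are linear algebra carried out at each point on the endomorphism $\varphi=-\nabla\zeta$. Since $g(\varphi(X),\zeta)=0$, the image of $\varphi$ lies in $\zeta^{\perp}$, so $\varphi$ is singular and its smallest singular value vanishes; diagonalising the non-negative symmetric operator $\varphi^{*}\varphi$ produces an orthonormal eigenframe $\{v_0,\dots,v_{m-1}\}$ with $v_0\in\ker\varphi$, and setting $\beta_i=\lambda_i^{-1}\varphi(v_i)$ when $\lambda_i>0$ (completing arbitrarily otherwise) gives the second orthonormal frame, which is (e$_1$). The ordered singular values are continuous as eigenvalues of a continuously varying symmetric operator, and on the open dense set where their multiplicities are locally constant standard perturbation theory makes them and their eigenspaces smooth, which is (e$_2$). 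For (f), evaluating \eqref{Eq: pull back metric Sasaki} on $v_i,v_j$ gives $(\zeta^{*}g_S)(v_i,v_j)=(1+\lambda_i^{2})\delta_{ij}$ with the convention $\lambda_0=0$, so the $e_i$ are $\zeta^{*}g_S$-orthonormal; substituting \eqref{Eq: differential of zeta}, the orthonormality of $\{v_i\}$ and $\{\beta_i\}$, and the horizontal/vertical orthogonality into $g_S$ shows by a short computation that the stated $\xi_\alpha$ are $g_S$-orthonormal and $g_S$-orthogonal to every $d\zeta(e_i)$, hence form a frame of the normal bundle of $\zeta(M)$; the same computation shows that sending $W\in\zeta^{\perp}$ to the $\zeta(M)$-normal component of $W^{ver}$ defines an isomorphism onto this normal bundle.

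It remains to treat (g) and (h). For (g), the integrand of $V(\zeta)$ is precisely the volume density of $\zeta^{*}g_S$ relative to $g$, so $V(\zeta)$ equals the Riemannian volume of the submanifold $\zeta(M)\subset UM$; since the first variation of volume sees only the normal component of a deformation and, by the isomorphism noted in (f), the normal components of deformations through unit vector fields exhaust the normal bundle of $\zeta(M)$, criticality of $V$ among unit vector fields is equivalent to minimality of the embedding $\zeta$. Item (h) is the computational core, and the step I expect to be the main obstacle: using Lemma \ref{boeckx} together with the companion Sasaki formulas for $\nabla^{g_{S}}$ on horizontal and vertical (tangential) lifts, and the Gauss formula, one computes the $\xi_\alpha$-components of $\nabla^{g_{S}}_{d\zeta(e_i)}d\zeta(e_i)$; the purely metric terms reproduce the Hessian contributions $\langle\nabla^{2}_{e_i,e_i}\zeta,\beta_\alpha\rangle$, the curvature term of Lemma \ref{boeckx} produces the $R_M(e_\alpha,e_i,\zeta,\beta_i)$ contribution, and summing over $i=0,\dots,m-1$ while correctly carrying the normalising factors $(1+\lambda_i^{2})^{-1}$ and $(1+\lambda_\alpha^{2})^{-1/2}$ yields the displayed formula for $H_\zeta$. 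The delicacy lies entirely in this bookkeeping and in cleanly separating the tangential and normal parts of the various connection and curvature terms, which is why the formula is most efficiently quoted from \cite{wood,wiegmink,medrano2,yampolsky,savas-4} rather than re-derived here.
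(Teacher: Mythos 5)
You should first note that the paper does not prove this proposition at all: it is presented as a list of known facts, with the reader referred to \cite{wood,wiegmink,medrano2,yampolsky} for the proofs, so there is no argument of the authors' own to compare yours against. Your sketch is the standard one and is essentially correct for items (a)--(e) and (g): the computation of $d\zeta$ via the connection map, the substitution into \eqref{Eq: Sasaki metric}, the first-variation argument under the pointwise constraint $g(W,\zeta)=0$, the fibrewise singular value decomposition of $\varphi$ (whose kernel is nontrivial because its image lies in $\zeta^{\perp}$), and the identification of $V(\zeta)$ with the volume of $(M,\zeta^{*}g_S)$ together with the observation that vertical variations sweep out the whole normal bundle of $\zeta(M)$ are exactly the arguments found in the cited sources.

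Two caveats. First, in (f) your claimed ``short computation'' does not in fact verify the frame as displayed: with the conventions of the statement ($\varphi=-\nabla\zeta$ and $\varphi(v_\alpha)=\lambda_\alpha\beta_\alpha$, hence $d\zeta(v_\alpha)=v_\alpha^{hor}-\lambda_\alpha\beta_\alpha^{ver}$) the normal directions are spanned by $\lambda_\alpha v_\alpha^{hor}+\beta_\alpha^{ver}$, whereas the printed $\xi_\alpha=(-\lambda_\alpha e_\alpha^{hor}+\beta_\alpha^{ver})/\sqrt{1+\lambda_\alpha^2}$ is neither orthogonal to $d\zeta(v_\alpha)$ nor of unit length once $e_\alpha=v_\alpha/\sqrt{1+\lambda_\alpha^2}$ is inserted. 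This is a sign and normalization slip carried over from \cite{savas-4}, where the opposite sign convention for $\nabla\zeta$ is used and the frame is written in the $g$-unit vectors $v_\alpha$; you should correct the formula (or flag the discrepancy) rather than assert that the verification succeeds as printed. Second, for (h) you describe the computation but do not carry it out, ultimately deferring to the references; that is no worse than what the paper itself does, but it means your proposal, like the paper, leaves the only genuinely nontrivial item unproved.
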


\section{Fibrations with totally geodesic fibers}\label{sec2}

\subsection{Great circle fibrations and the Ricatti equation}
Let $V$ be an open and convex subset of $\S^3$ and $f:V\to\S^2$ be a submersion with totally geodesic fibers.
Denote by $\zeta$ the unit vector field generating the fibers, by $\mathcal{V}=\ker df=\span\{\zeta\}$ the {\em vertical
line bundle} and by $\mathcal{H}=\mathcal{V}^\perp$ the {\em horizontal plane bundle}. Consider the tensor $\varphi: \mathcal{H} \to \mathcal{H}$,
given by
$$\varphi(v)=-\nabla_{v}\zeta,$$
for all $v\in \mathcal{H}$, where $\nabla$ stands for the standard Levi-Civita connection of $\S^3$. The tensor
$\varphi$ is the (formal) {\em second fundamental form} of $\mathcal{H}$.
It is well-known that $\varphi$ satisfies the equations
\begin{equation}\label{ricatti}
\nabla^{\mathcal{H}}_{\zeta}\varphi=\varphi^2+I \quad\text{and}\quad (\nabla^{\mathcal{H}}_{v}\varphi)w
-(\nabla^{\mathcal{H}}_{w}\varphi)v=0,
\end{equation}
for any pair of vector fields $v,w$ on $\mathcal{H}$; see for example \cite[page 313]{baird1}.
Denote by $J$ the complex structure of $\mathcal{H}$ and by
$\{\alpha_1=\zeta,\alpha_2,\alpha_3=J\alpha_2\}$ a local orthonormal frame on $V$. Moreover, denote by
$\varphi_{ij}=\langle\varphi(\alpha_i),\alpha_j\rangle$, $i,j\in\{2,3\}$, the components of $\varphi$ with respect
to the orthonormal frame
$\{\alpha_2,\alpha_3\}$. Then,
$$\trace\varphi=\varphi_{22}+\varphi_{33}=-\div(\zeta).$$
From the Ricatti equation in \eqref{ricatti}, we obtain the  following:
\begin{lemma}\label{ricattieqns}
Let $\gamma$ be an integral curve of $\zeta$ and $\{\alpha_2, \alpha_3\}$ is a parallel orthonormal frame 
along $\gamma^*\mathcal{H}$. Then:
\begin{enumerate}[\rm (a)]
\item
The components $\varphi_{ij}$, $i,j\in\{2,3\}$, of $\varphi$ with respect to this frame, satisfy the ODEs
\begin{equation}\label{ric1}
\left\{
\begin{array}{lll}
\zeta(\varphi_{22})&=&1+\varphi_{22}^2+\varphi_{23}\varphi_{32},\\
\zeta(\varphi_{33})&=&1+\varphi_{33}^2+\varphi_{23}\varphi_{32}, \\
\zeta(\varphi_{23})&=&\varphi_{23}(\varphi_{22}+\varphi_{33}), \\
\zeta(\varphi_{32})&=&\varphi_{32}(\varphi_{22}+\varphi_{33}).
\end{array}
\right.
\end{equation}
\item
The functions $\trace{\varphi}$, $\trace{(\varphi\circ J})$ and $\det\varphi$ satisfy the ODEs
\begin{equation}\label{ric2}
\left\{
\begin{array}{lll}
\zeta(\trace(\varphi\circ J))&=&(\trace\varphi)(\trace(\varphi\circ J)),\\
\zeta(1+\det \varphi)&=&(\trace\varphi)(1+\det \varphi), \\
\zeta(\trace\varphi)&=&(\trace\varphi)^2-2(1+\det \varphi)+4,
\end{array}
\right.
\end{equation}
and
\begin{equation}\label{ric3}
\zeta\big\{(\trace\varphi)^2-4\det \varphi\big\}=2(\trace\varphi)\big\{(\trace\varphi)^2-4\det \varphi\big\}.
\end{equation}
\end{enumerate}
\end{lemma}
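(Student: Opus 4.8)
The plan is to obtain both parts from the Ricatti identity in \eqref{ricatti} together with pure linear algebra on the $2\times 2$ matrix $\big(\varphi_{ij}\big)_{i,j\in\{2,3\}}$; the Codazzi part of \eqref{ricatti} is not needed here. Since $\zeta$ is a unit field and $\gamma$ is an integral curve, one has $\gamma'=\zeta$ along $\gamma$, so for a function $h$ the quantity $\zeta(h)$ is just the derivative of $h\circ\gamma$; thus the assertions in \eqref{ric1}, \eqref{ric2} and \eqref{ric3} are genuine systems of ODEs along $\gamma$. We may and do take $\{\alpha_2,\alpha_3\}$ to be parallel and positively oriented along $\gamma^*\mathcal{H}$: such a frame exists because parallel transport in the rank-two bundle $\mathcal{H}$ along $\gamma$ is the solution of a linear ODE, and since $\nabla^{\mathcal{H}}$ is metric and $\mathcal{H}$ is oriented, this parallel transport acts by rotations, hence commutes with $J$. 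In particular $\nabla^{\mathcal{H}}_{\zeta}\alpha_i=0$ and $J\alpha_2=\alpha_3$, $J\alpha_3=-\alpha_2$ along $\gamma$, so $J$ has a constant matrix in this frame.

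For part (a), parallelism of the frame gives
$$
\zeta(\varphi_{ij})=\zeta\big(\langle\varphi(\alpha_i),\alpha_j\rangle\big)=\big\langle\big(\nabla^{\mathcal{H}}_{\zeta}\varphi\big)(\alpha_i),\alpha_j\big\rangle=\big\langle(\varphi^2+I)(\alpha_i),\alpha_j\big\rangle,
$$
where the last equality is the Ricatti equation in \eqref{ricatti}. Writing $\varphi(\alpha_2)=\varphi_{22}\alpha_2+\varphi_{23}\alpha_3$ and $\varphi(\alpha_3)=\varphi_{32}\alpha_2+\varphi_{33}\alpha_3$, one computes $(\varphi^2+I)(\alpha_2)=\big(1+\varphi_{22}^2+\varphi_{23}\varphi_{32}\big)\alpha_2+\varphi_{23}(\varphi_{22}+\varphi_{33})\alpha_3$, and the analogous expression for $(\varphi^2+I)(\alpha_3)$; reading off the four components yields \eqref{ric1}.

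For part (b), in the chosen frame $\trace\varphi=\varphi_{22}+\varphi_{33}$, $\trace(\varphi\circ J)=\varphi_{32}-\varphi_{23}$ and $\det\varphi=\varphi_{22}\varphi_{33}-\varphi_{23}\varphi_{32}$. Differentiating $\trace(\varphi\circ J)$ along $\gamma$ and using the last two equations of \eqref{ric1} gives $\zeta\big(\trace(\varphi\circ J)\big)=(\varphi_{32}-\varphi_{23})(\varphi_{22}+\varphi_{33})=(\trace\varphi)\trace(\varphi\circ J)$. Differentiating $\det\varphi$ by the Leibniz rule and substituting \eqref{ric1}, all terms collect into $(\varphi_{22}+\varphi_{33})\big(1+\varphi_{22}\varphi_{33}-\varphi_{23}\varphi_{32}\big)$, which equals $(\trace\varphi)(1+\det\varphi)$. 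Finally, $\zeta(\trace\varphi)=\zeta(\varphi_{22})+\zeta(\varphi_{33})=2+\varphi_{22}^2+\varphi_{33}^2+2\varphi_{23}\varphi_{32}$, and the elementary identity $\varphi_{22}^2+\varphi_{33}^2+2\varphi_{23}\varphi_{32}=(\trace\varphi)^2-2\det\varphi$ converts this into $(\trace\varphi)^2-2(1+\det\varphi)+4$; this establishes \eqref{ric2}. Then \eqref{ric3} is immediate from \eqref{ric2}:
\begin{align*}
\zeta\big\{(\trace\varphi)^2-4\det\varphi\big\}
&=2(\trace\varphi)\big\{(\trace\varphi)^2-2(1+\det\varphi)+4\big\}-4(\trace\varphi)(1+\det\varphi)\\
&=2(\trace\varphi)\big\{(\trace\varphi)^2-4\det\varphi\big\}.
\end{align*}

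There is no conceptual obstacle: the only slightly delicate point is the existence of a parallel, positively oriented frame along $\gamma$ — this is what makes $\zeta(\varphi_{ij})$ equal the $(i,j)$-component of $\nabla^{\mathcal{H}}_{\zeta}\varphi$ and lets $J$ be represented by a constant rotation — and the most error-prone steps are the algebraic bookkeeping in the $\det\varphi$ computation and in the verification of \eqref{ric3}.
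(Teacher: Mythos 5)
Your proposal is correct and follows exactly the route the paper intends: the paper offers no written proof beyond the remark that the lemma follows from the Ricatti equation \eqref{ricatti}, and your derivation---computing $\langle(\varphi^2+I)\alpha_i,\alpha_j\rangle$ in a parallel frame for \eqref{ric1} and then applying the Leibniz rule to $\trace\varphi$, $\trace(\varphi\circ J)$ and $\det\varphi$ for \eqref{ric2} and \eqref{ric3}---is precisely the omitted computation. All the algebra checks out, including the sign convention $\trace(\varphi\circ J)=\varphi_{32}-\varphi_{23}$, which is consistent with its later use in the proof of Lemma \ref{gw}.
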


\subsection{Great circle fibrations and length decreasing maps}
Due to an impressive result of Gluck and Warner \cite{gluck3}, there is a relation between great circle fibrations of the $3$-sphere
and length decreasing maps between two dimensional euclidean spheres. This construction is obtained as follows: Associate at each point $x\in\S^3$
the $2$-dimensional subspace of $\R^4$ spanned by the great circle of the fibration passing through $x$. In this way, we obtain a map with values in the
Grassmann space $\mathbb{G}_2(\R^4)\simeq\S^2\times\S^2$ of oriented 2-planes is $\R^4$. It turns out that, the image of this particular map is a two
dimensional surface in $\S^2\times\S^2$ which is the graph of a
strictly length decreasing map. The converse is also true, i.e. any strictly length decreasing map between $2$-dimensional euclidean unit spheres, gives rise to a great circle foliation of the $3$-sphere.
To make the paper self-contained, let us describe here this duality following our exposition.

Denote by $\Lambda^2(\R^4)$ the dual space of all alternative multilinear forms of
degree $2$. Elements of $\Lambda^2(\R^4)$  are called {\em $2$-vectors.} As a matter of fact,
for given vectors $v_1$ and $v_2$ on $\R^4$, the {\em exterior product}
$v_1\wedge v_2$
is the linear map which on an alternating form $\Omega$ of degree $2$ takes the value
$$
(v_1\wedge v_2)(\Omega)=\Omega(v_1,v_2).
$$
The exterior product is linear in each variable separately. Interchanging
two elements the sign of the product changes and if two variables are the same
the exterior product vanishes. A $2$-vector $\omega$ is called {\em simple} or {\em decomposable}
if it can be
written as a single wedge product of vectors, that is
$$
\omega=v_1\wedge v_2.
$$
Note that there are $2$-vectors which are not simple. One can verify that
the exterior product $v_1\wedge v_2$ is zero if and only if the
vectors are linearly dependent. Moreover, if $\{\varepsilon_1,\varepsilon_2,\varepsilon_3,\varepsilon_4\}$
consists a basis for $\R^4$, then the collection
$\{\varepsilon_i\wedge\varepsilon_j:1\le i< j\le 4\}$
consists a basis of $\Lambda^2(\R^4)$. Therefore, the dimension of the
vector space of $2$-vectors is $6$.

We can equip $\Lambda^2(\R^4)$ with a natural inner product $(\cdot\,,\cdot)$. Indeed, define
$$
(v_1\wedge v_2,w_1\wedge w_2)=
\langle v_1,w_1\rangle\langle v_2,w_2\rangle-\langle v_1,w_2\rangle\langle v_2,w_1\rangle,
$$
on simple $2$-vectors and then extend linearly. Note that, if $\{\varepsilon_1,\varepsilon_2,\varepsilon_3,\varepsilon_4\}$ is an
orthonormal basis of $\R^4$ then, the $2$-vectors $\{\varepsilon_i\wedge\varepsilon_j:1\le i< j\le 4\}$
consists an orthonormal basis for the exterior power $\Lambda^2(\R^4)$.

Each simple vector represents a unique $2$-dimensional subspace of $\R^4$. Moreover,
if $\omega_1$ and $\omega_2$ are simple vectors representing the same subspace, then there exists
a non-zero real number $\lambda$ such that
$\omega_1=\lambda\,\omega_2.$
Therefore, there is an obvious equivalence relation
on the space of simple $2$-vectors such that the space of equivalence classes is to an one-to-one correspondence
with the space of $2$-dimensional subspaces of $\R^4$.
Additionally, we can consider another relation
on the set of non-zero simple $2$-vectors: {\em $\omega_1$ and $\omega_2$ are called equivalent
if and only if
$\omega_1=\lambda\,\omega_2$
for some positive number $\lambda$.} Denote by $[\omega]$
the class containing all simple $2$-vectors that are equivalent to $\omega$.
The equivalence
classes now obtained are called {\em oriented $2$-dimensional subspaces} of $\R^4$
and the space $\mathbb{G}_2(\R^4)$ of all equivalence classes is called {\em Grassmann space}
of oriented $2$-planes of $\R^4$. Consequently, a plane $\varPi$ in $\R^4$ can be associated
with the equivalence class of the $2$-vector $\omega=v_1\wedge v_2$, where $\{v_1,v_2\}$ is an orthonormal
basis of $\varPi$.

There exists a natural linear endomorphism $\ast$ of $\Lambda^2(\R^4)$ which maps a
$2$-plane $\varPi$ in $\R^4$ into its orthogonal complements $\varPi^{\perp}$. Specifically,
if $\{\varepsilon_1,\varepsilon_2,\varepsilon_3,\varepsilon_4\}$ is the standard orthonormal basis
of $\R^4$ and
$$\omega=\alpha_{12}\varepsilon_{1}\wedge \varepsilon_{2}+
\alpha_{13}\varepsilon_{1}\wedge \varepsilon_{3}+\alpha_{14}\varepsilon_{1}\wedge \varepsilon_{4}
+\alpha_{23}\varepsilon_{2}\wedge \varepsilon_{3}+\alpha_{24}\varepsilon_{2}\wedge \varepsilon_{4}
+\alpha_{34}\varepsilon_{3}\wedge \varepsilon_{4},$$
we define
$$*\omega=\alpha_{34}\varepsilon_{1}\wedge \varepsilon_{2}-\alpha_{24}\varepsilon_{1}\wedge \varepsilon_{3}
+\alpha_{23}\varepsilon_{1}\wedge \varepsilon_{4}+\alpha_{14}\varepsilon_{2}\wedge \varepsilon_{3}
-\alpha_{13}\varepsilon_{2}\wedge \varepsilon_{4}+\alpha_{12}\varepsilon_{3}\wedge \varepsilon_{4}.$$
The operator $\ast$ is called {\em Hodge star operator}. Let us mention here that $\ast$ is an
isometry and it satisfies
$$\ast^2=\ast\circ\ast=I.$$
Using elementary arguments, one can show that a non-zero $2$-vector $\omega$ is simple if and only
if $\omega\wedge\omega = 0$ or, equivalently, if and only if $(\omega,\ast\omega)=0$. Hence, we may represent
the space of oriented two planes in $\R^4$ in the form
$$
\mathbb{G}_2(\R^4)=\big\{[\omega]:\omega\in\Lambda^2(\R^4),\,\,\|\omega\|_{\Lambda^2(\R^4)}=1\,\,\text{and}\,\,(\omega,\ast\omega)=0\big\}.
$$
The Hodge star operator has eigenvalues $+1$ and
$-1$, both of multiplicity $3$. In particular, the corresponding eigenspaces of the
$\ast$ are $3$-dimensional and they are given
by
$$
E_{-}=\{ \omega\in\Lambda^{2}(\mathbb{R}^{4}): *\omega=-\omega\big\}
\quad\text{and}\quad
E_{+}=\big\{ \omega\in\Lambda^{2}(\mathbb{R}^{4}): *\omega=\omega\}.
$$
The eigenspaces $E_-$ and $E_+$ are mutually perpendicular and
$$
\Lambda^{2}(\mathbb{R}^{4})=E_{-}\oplus E_{+}.
$$
As a matter of fact, any element $\omega\in\Lambda^2(\R^4)$ can be uniquely written in the form
\begin{equation}\label{dirsum}
\omega=\frac{\omega-*\omega}{2}\oplus\frac{\omega+*\omega}{2},
\end{equation}
where the first term in the right hand side of \eqref{dirsum} belongs to $E_-$ and the second to
$E_+$.
Moreover,
if $\{\varepsilon_1,\varepsilon_2,\varepsilon_3,\varepsilon_4\}$ is the standard basis of $\R^4$,
then the collection
$$\left\{ \frac{\varepsilon_{1}\wedge \varepsilon_{2}+\varepsilon_{3}\wedge \varepsilon_{4}}{2},
\frac{\varepsilon_{1}\wedge \varepsilon_{3}-\varepsilon_{2}\wedge \varepsilon_{4}}{2},
\frac{\varepsilon_{1}\wedge \varepsilon_{4}+\varepsilon_{2}\wedge \varepsilon_{3}}{2}\right\},$$
forms an orthogonal basis of $E_{+}$ and
$$\left\{ \frac{\varepsilon_{1}\wedge \varepsilon_{2}-\varepsilon_{3}\wedge \varepsilon_{4}}{2}, \frac{\varepsilon_{1}\wedge \varepsilon_{3}+\varepsilon_{2}\wedge \varepsilon_{4}}{2},
\frac{\varepsilon_{1}\wedge \varepsilon_{4}-\varepsilon_{2}\wedge \varepsilon_{3}}{2}\right\},$$
forms an orthogonal basis of $E_{-}$.

Consider now the euclidean spheres
$$\mathbb{S}^{2}_{-}=\big\{ \omega\in E_{-}: \|\omega\|_{\Lambda^{2}(\mathbb{R}^{4})}=
1/\sqrt{2}\big\}\quad \text{and} \quad
\mathbb{S}^{2}_{+}=\big\{ \omega\in E_{+}: \|\omega\|_{\Lambda^{2}(\mathbb{R}^{4})}=1/\sqrt{2}\big\}.$$
\begin{lemma}The Grassmann space $\mathbb{G}_{2}(\mathbb{R}^{4})$ can be identified with the direct
product $\mathbb{S}^{2}_{-}\times\mathbb{S}^{2}_{+}$.
\end{lemma}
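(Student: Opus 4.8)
The plan is to write down an explicit bijection $\Phi\colon\mathbb{G}_2(\mathbb{R}^4)\to\mathbb{S}^2_-\times\mathbb{S}^2_+$ coming from the orthogonal eigenspace decomposition $\Lambda^2(\mathbb{R}^4)=E_-\oplus E_+$ of the Hodge star operator. First I would recall from \eqref{dirsum} that every $2$-vector $\omega$ splits uniquely as $\omega=\omega_-+\omega_+$ with $\omega_-=\tfrac12(\omega-\ast\omega)\in E_-$ and $\omega_+=\tfrac12(\omega+\ast\omega)\in E_+$, and that $E_-\perp E_+$. Since every equivalence class $[\omega]$ of a nonzero simple $2$-vector contains a unique representative of unit norm, namely $\omega/\|\omega\|$, I may regard a point of $\mathbb{G}_2(\mathbb{R}^4)$ as a unit simple $2$-vector $\omega$ (equivalently, $\|\omega\|=1$ and $(\omega,\ast\omega)=0$, by the simplicity criterion recalled above), and define
$$\Phi([\omega])=(\omega_-,\omega_+)=\Big(\tfrac{\omega-\ast\omega}{2},\,\tfrac{\omega+\ast\omega}{2}\Big).$$

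The next step is to verify that $\Phi$ takes values in $\mathbb{S}^2_-\times\mathbb{S}^2_+$. For a unit simple $2$-vector, orthogonality of the eigenspaces gives $\|\omega_-\|^2+\|\omega_+\|^2=\|\omega\|^2=1$, while from $\ast\omega=-\omega_-+\omega_+$ one computes $0=(\omega,\ast\omega)=-\|\omega_-\|^2+\|\omega_+\|^2$; adding and subtracting these relations forces $\|\omega_-\|=\|\omega_+\|=1/\sqrt2$, which is precisely the condition that $\omega_-\in\mathbb{S}^2_-$ and $\omega_+\in\mathbb{S}^2_+$. In particular $\Phi$ is well defined on classes. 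For surjectivity, given $(a,b)\in\mathbb{S}^2_-\times\mathbb{S}^2_+$ I would set $\omega=a+b$ and check $\|\omega\|^2=\|a\|^2+\|b\|^2=1$ and $(\omega,\ast\omega)=(a+b,-a+b)=-\|a\|^2+\|b\|^2=0$, so that $\omega$ is a unit simple $2$-vector with $\Phi([\omega])=(a,b)$; injectivity is immediate, since $\Phi([\omega])=\Phi([\omega'])$ forces $\omega=\omega_-+\omega_+=\omega'_-+\omega'_+=\omega'$.

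Finally, both $\Phi$ and its inverse $(a,b)\mapsto[a+b]$ are restrictions of linear maps of $\Lambda^2(\mathbb{R}^4)$, hence smooth, so the identification is in fact a diffeomorphism (and, with the appropriate scaling of the product metric, an isometry, although the statement only asks for an identification). I do not expect a genuine obstacle in this argument; the only point that deserves attention is the simultaneous bookkeeping that places \emph{both} factors on the radius-$1/\sqrt2$ spheres, which is exactly where the two defining conditions $\|\omega\|=1$ and $(\omega,\ast\omega)=0$ of $\mathbb{G}_2(\mathbb{R}^4)$ are used together.
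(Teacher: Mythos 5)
Your proposal is correct and uses exactly the same identification as the paper, namely the map $[\omega]\mapsto\bigl(\tfrac{\omega-\ast\omega}{2},\tfrac{\omega+\ast\omega}{2}\bigr)$; you merely spell out the norm computation, surjectivity and injectivity that the paper leaves implicit. No issues.
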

\begin{proof}
Let $\varPi=[\omega]\in\mathbb{G}_{2}(\mathbb{R}^{4})$ and assume that the representative $\omega$ is chosen to have the form $\omega=v_1\wedge v_2,$
where $\{v_1,v_2\}$ is an orthonormal basis of the under consideration plane. Observe that, if $\{w_1,w_2\}$ is another orthonormal frame of $\varPi$ which belongs in the same orientation with $\{v_1,v_2\}$,
then
$$v_1\wedge v_2=w_1\wedge w_2.$$
Now, the map
\begin{equation}\label{dirsum1}
[\omega]\longmapsto\frac{\omega-*\omega}{2}\oplus\frac{\omega+*\omega}{2}
\end{equation}
is well defined and gives rise to a bijection between $\mathbb{G}_{2}(\mathbb{R}^{4})$ and $\S^2_-\times\S^2_+$. This completes the proof.
\end{proof}

Suppose now that $V$ is a saturated open neighbourhood of $\S^3$ such that $f(V)\subset\S^2_-$ is simply connected.
Define the maps $h_{\pm}:f(V)\subset\S^2_-\to\S^2_\pm$ given by
\begin{equation}\label{graph}
h_{\pm}\big(f(x)\big)=\frac{x\wedge \zeta(x)\pm*(x\wedge \zeta(x))}{2},
\end{equation}
where $x\in V$. We give now an alternative proof of a beautiful result first proved by Gluck and Warner \cite{gluck3}.
\begin{lemma}\label{gw}
The following statements hold true:
\begin{enumerate}[\rm(a)]
\item
The maps $h_{\pm}$ are well-defined and smooth.
\medskip
\item
The map $G=(h_-,h_+)$ gives rise to a strictly length decreasing graphical map.
\medskip
\item
If $h_-$ is a diffeomorphism, then
the singular values $\mu_1$ and $\mu_2$
of $h_+\circ h^{-1}_-$ are related with the second fundamental form $\varphi$ by
$$
\mu_1\circ f=\frac{\Big|\sqrt{(1-\det \varphi)^{2}+(\trace \varphi)^{2}}
-\sqrt{|\varphi|^{2}-2\det\varphi}\,\Big|}{1+\det\varphi+\trace{(\varphi\circ J})}
$$
and
$$
\mu_2\circ f=\frac{\sqrt{(1-\det \varphi)^{2}+(\trace \varphi)^{2}}+\sqrt{|\varphi|^{2}-2\det\varphi}}{1+\det\varphi+\trace{(\varphi\circ J})}.
$$
\end{enumerate}
\end{lemma}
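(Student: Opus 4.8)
The plan is to work entirely in the linear-algebra model of $\mathbb{G}_2(\R^4)\simeq\S^2_-\times\S^2_+$ set up above, and to translate the three assertions into statements about the $2$-vector $\omega(x)=x\wedge\zeta(x)$ and its first derivatives. First I would verify part (a): the maps $h_\pm$ are well-defined because the right-hand side of \eqref{graph} depends only on the oriented plane $\span\{x,\zeta(x)\}$, which is the great circle through $x$, hence is constant along the fibres of $f$; on a simply connected $f(V)$ the fibration admits a smooth section, so $h_\pm$ are smooth. Smoothness of $G$ as a map into $\S^2_-\times\S^2_+$ follows since $\|\omega(x)\|^2=1$ (as $x\perp\zeta(x)$ are unit) forces $\|(\omega\mp*\omega)/2\|^2=1/2$, so the image lands in the correct product of spheres.

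Next, for part (b), the key computation is the differential of $x\mapsto\omega(x)=x\wedge\zeta(x)$ along a horizontal direction. Using $\nabla_v x=v$ and $\nabla_v\zeta=-\varphi(v)$ for $v\in\mathcal{H}$, and working modulo the tangent space to $\S^3$ appropriately, one gets
\begin{equation*}
d\omega(v)=v\wedge\zeta-x\wedge\varphi(v),\qquad v\in\mathcal{H}.
\end{equation*}
Picking the adapted frame $\{\alpha_2,\alpha_3=J\alpha_2\}$ of $\mathcal{H}$ and writing $\varphi$ in components $\varphi_{ij}$, I would compute the $6$ scalar components of $d\omega(\alpha_2)$ and $d\omega(\alpha_3)$ in the standard basis $\{\varepsilon_i\wedge\varepsilon_j\}$, then project onto $E_-$ and $E_+$ via \eqref{dirsum} to obtain the differentials $dh_-$ and $dh_+$ as $2\times2$ matrices. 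From these one reads off the pulled-back metrics $h_-^*\gind_{\S^2_-}$ and $h_+^*\gind_{\S^2_+}$; a direct calculation should give that, as quadratic forms on $\mathcal{H}$,
\begin{equation*}
h_-^*\gind_{-}=\tfrac14\big((1+\det\varphi+\trace(\varphi\circ J))^2+\dots\big)(\text{positive definite}),
\end{equation*}
and that $h_+^*\gind_+ - h_-^*\gind_-$ is negative definite precisely because $|\varphi|^2\ge 0$ forces the requisite inequality; this is the statement that $G$ is strictly length decreasing, i.e.\ $h_+\circ h_-^{-1}$ contracts lengths wherever $h_-$ is a local diffeomorphism. Equivalently, one checks that the $2$-plane $d\omega(\mathcal{H})\subset\Lambda^2(\R^4)$ makes an angle with $E_-$ strictly smaller than with $E_+$; this is where the sign conventions of the Hodge operator on $\S^3$ (as opposed to the opposite orientation) enter decisively.

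Finally, part (c) is pure linear algebra once the two pulled-back metrics are in hand. The singular values $\mu_1,\mu_2$ of $h_+\circ h_-^{-1}$ are, by definition, the square roots of the eigenvalues of $(h_-^*\gind_-)^{-1}(h_+^*\gind_+)$. I would diagonalise this pair of quadratic forms on $\mathcal{H}$ simultaneously; the trace and determinant of the ratio produce symmetric functions of $\mu_1^2,\mu_2^2$, and expressing these in terms of the invariants $\trace\varphi$, $\det\varphi$, $\trace(\varphi\circ J)$ and $|\varphi|^2$ yields exactly the two displayed formulas, the absolute value in the numerator of $\mu_1\circ f$ reflecting which eigenvalue is the smaller. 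The main obstacle I anticipate is purely organisational: carrying the $6$-component wedge computation and the two simultaneous $3\times3\to 2\times2$ projections without sign errors, and correctly identifying which of $E_\pm$ plays the role of the ``base'' $\S^2_-$ so that the length-decreasing inequality comes out with the right sign. Once the expression for $d\omega(v)$ and the eigenspace bases are fixed, everything else is a determinant-and-trace bookkeeping exercise using the identity $*^2=I$ and the orthogonality of $E_-$ and $E_+$.
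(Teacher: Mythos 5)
Your outline of parts (a) and (c), and your computation of the differential via $d\omega(v)=v\wedge\zeta-x\wedge\varphi(v)$ followed by projection onto $E_\pm$, matches the paper's route: the paper computes $d(h_\pm\circ f)(\alpha_i)$ in an adapted frame $\{\alpha_1=\zeta,\alpha_2,\alpha_3\}$ and extracts the singular values of $h_+\circ h_-^{-1}$ by exactly the determinant-and-trace bookkeeping you describe. The genuine gap is in part (b). After the projection, the inequality $|d(h_+\circ f)(\alpha)|^2<|d(h_-\circ f)(\alpha)|^2$ for all $\alpha=\kappa_1\alpha_2+\kappa_2\alpha_3$ turns out to be equivalent to the negative definiteness of the quadratic form
$$
\varphi_{23}\kappa_1^{2}+(\varphi_{33}-\varphi_{22})\kappa_1\kappa_2-\varphi_{32}\kappa_2^{2},
$$
that is, to $\trace(\varphi\circ J)=\varphi_{32}-\varphi_{23}$ having a fixed sign \emph{and} to $(\trace\varphi)^{2}-4\det\varphi<0$. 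Neither condition is a pointwise algebraic consequence of $|\varphi|^2\ge 0$ or of any other universal inequality: for a generic endomorphism of a plane (e.g.\ any symmetric $\varphi$ with distinct eigenvalues) one has $(\trace\varphi)^2-4\det\varphi>0$. The inequality is a dynamical fact that uses both the Ricatti system \eqref{ric1}--\eqref{ric3} satisfied by $\varphi$ along the fibres and the fact that the fibres are \emph{closed} great circles. Concretely, \eqref{ric3} shows that $(\trace\varphi)^2-4\det\varphi$ obeys a linear ODE along each fibre, so its sign is constant on each fibre; if it were nonnegative somewhere, the third equation of \eqref{ric2} would give $\zeta(\trace\varphi)\ge 2(1+\det\varphi)>0$ along the entire closed fibre, contradicting the periodicity of $\trace\varphi$.

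The same mechanism is needed earlier than you acknowledge: the sign invariance of $1+\det\varphi$ and of $\trace(\varphi\circ J)$ along fibres (together with the blow-up of $\zeta(\varphi_{22})=1+\varphi_{22}^2+\varphi_{23}\varphi_{32}$ on a closed curve if $\trace(\varphi\circ J)$ vanished) is what guarantees that one of the determinants $D_\pm=-1-\det\varphi\mp\trace(\varphi\circ J)$ is nowhere zero, i.e.\ that one of $h_\pm$ is a diffeomorphism. You need this before you can even form $h_+\circ h_-^{-1}$ in parts (b) and (c), and before you can assert that $G$ is graphical over $\S^2_-$. Without this ODE input, the linear-algebra computation alone cannot close the argument.
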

\begin{proof}
(a) Consider points $x,y\in V$ such that $f(x)=f(y)$. Then, $x$ and $y$ belongs in the same circle of the foliation from where we deduce that
$$x\wedge \zeta(x)=y\wedge \zeta(y).$$
Therefore,
$$h_{\pm}(f(x))=h_{\pm}(f(y))$$
and $h_{\pm}$ is well defined. Smoothness of the maps $h_{\pm}$ is clear.

(b) Let us compute now the differentials of $h_{\pm}\circ f$. Fix a point $x\in V$ and consider a local orthonormal
basis $\{\alpha_1=\zeta,\alpha_2,\alpha_3\}$ of $T_xV$. Then,
\begin{equation*}
\begin{array}{lll}
\ast (x\wedge \alpha_{1}) =\,\,\,\, \alpha_{2}\wedge \alpha_{3},&\ast (x\wedge \alpha_{3})  \,\,\,=\alpha_{1}\wedge \alpha_{2},&\ast (\alpha_{1}\wedge \alpha_{3}) =-x\wedge \alpha_{2},\\
\ast (x\wedge \alpha_{2}) =-\alpha_{1}\wedge \alpha_{3},&\ast (\alpha_{1}\wedge \alpha_{2}) =x\wedge \alpha_{3},&\ast (\alpha_{2}\wedge \alpha_{3}) = \,\,\,\,x\wedge \alpha_{1}.
\end{array}
\end{equation*}
Differentiating \eqref{graph} with respect to $\alpha_2$ and $\alpha_3$, we get that
\begin{eqnarray}
d(h_-\circ f)(\alpha_2)&=&\,\,\,\,(1-\varphi_{23})\frac{x\wedge \alpha_3-\alpha_1\wedge \alpha_2}{2}-\varphi_{22}\frac{x\wedge \alpha_2+\alpha_1\wedge \alpha_3}{2},\label{dif1}\\
d(h_-\circ f)(\alpha_3)&=&-\varphi_{33}\frac{x\wedge \alpha_3-\alpha_1\wedge \alpha_2}{2}
-(1+\varphi_{32})\frac{x\wedge \alpha_2+\alpha_1\wedge \alpha_3}{2},\label{dif2}\\
d(h_+\circ f)(\alpha_2)&=&-(1+\varphi_{23})\frac{x\wedge \alpha_3+\alpha_1\wedge \alpha_2}{2}-\varphi_{22}\frac{x\wedge \alpha_2-\alpha_1\wedge \alpha_3}{2},\label{dif3}\\
d(h_+\circ f)(\alpha_3)&=&-\varphi_{33}\frac{x\wedge \alpha_3+\alpha_1\wedge \alpha_2}{2}
+(1-\varphi_{32})\frac{x\wedge \alpha_2-\alpha_1\wedge \alpha_3}{2}.\label{dif4}
\end{eqnarray}
Observe that the vectors
$$
\Big\{\frac{x\wedge \alpha_3-\alpha_1\wedge \alpha_2}{\sqrt 2},\frac{x\wedge \alpha_2+\alpha_1\wedge \alpha_3}{\sqrt 2}\Big\}
$$
form an orthonormal basis of the tangent space at $h_-(f(x))$ of $\S^2_-\subset E_-$ and
$$
\Big\{\frac{x\wedge \alpha_3+\alpha_1\wedge \alpha_2}{\sqrt 2},\frac{x\wedge \alpha_2-\alpha_1\wedge \alpha_3}{\sqrt 2}\Big\}
$$
form an orthonormal basis of the tangent space at $h_+(f(x))$ of $\S^2_+\subset E_+$.

We claim now that either
$$
D_-=\det\begin{bmatrix}
     1-\varphi_{23} & -\varphi_{33}\\
     -\varphi_{22} & -1-\varphi_{32}\\
\end{bmatrix}=-1-\det\varphi+\varphi_{23}-\varphi_{32}=-1-\det\varphi-\trace(\varphi\circ J)\neq 0,
$$
everywhere on $V$, or
$$
D_+=\det\begin{bmatrix}
     -1-\varphi_{23} & -\varphi_{33}\\
     -\varphi_{22} & 1-\varphi_{32}\\
\end{bmatrix}
=-1-\det\varphi+\varphi_{32}-\varphi_{23}=-1-\det\varphi+\trace(\varphi\circ J)\neq 0,
$$
everywhere in $V$. Indeed! From the last two equations of \eqref{ric2} it follows that $1+\det\varphi$ is strictly positive on $V$. Moreover, from the first equation of \eqref{ric2} and the first equation of \eqref{ric1} it follows that
$\varphi_{23}-\varphi_{32}$ is nowhere zero. Consequently, either $\varphi_{23}-\varphi_{32}$ is strictly negative and $D_-$ is strictly negative in $V$, or $\varphi_{23}-\varphi_{32}$ is strictly positive and $D_+$ is strictly
negative in $U$. Since $f(V)$ is simply connected, we deduce that one of the maps $h_{\pm}$ is a diffeomorphism and $G$ is graphical.

Without loss of generality, assume that $h_-$ is a diffeomorphism.
Then, $\varphi_{23}-\varphi_{32}<0$ everywhere in $V$.
In this case, we will show that $h_+\circ h^{-1}_-$ is strictly length decreasing, i.e.
$$|d(h_{+}\circ h_{-}^{-1})(df(a))|< |df(a)|,$$
or, equivalently,
\begin{equation}\label{trionymo}
|d(h_{+}\circ f)(\alpha))|^2< |d(h_{-}\circ f)(\alpha)|^2,
\end{equation}
for any vector $\alpha\in\mathcal{H}$. Indeed! If $\alpha=\kappa_1 \alpha_2+\kappa_2 \alpha_3$, then from \eqref{dif1}, \eqref{dif2}, \eqref{dif3} and \eqref{dif4}, we get that \eqref{trionymo} holds if and only if
\begin{equation}\label{trionymo1}
\varphi_{23}\kappa_1^{2}+(\varphi_{33}-\varphi_{22})\kappa_1\kappa_2-\varphi_{32}\kappa_2^{2}<0,
\end{equation}
for any $\kappa_1,\kappa_2\in\R$. On the other hand, \eqref{trionymo1} holds for any
$\kappa_1,\kappa_2$, if and only if the matrix
$$
A=
\begin{bmatrix}
	\varphi_{23} & \frac{1}{2}(\varphi_{33}-\varphi_{22})\\
	\frac{1}{2}(\varphi_{33}-\varphi_{22}) & -\varphi_{32}\\
\end{bmatrix}
$$
has negative eigenvalues or, equivalently, if and only if
$$
\trace A=\varphi_{23}-\varphi_{32}<0\quad\text{and}\quad
4\det A=-(\trace\varphi)^{2}+4\det\varphi>0.
$$
The validity of the first condition is clear. Suppose now to the contrary, that there is a point $x_0\in V$, where
$$(\trace\varphi)^{2}(x_0)-4\det\varphi(x_0)\geq 0.$$
From \eqref{ric3} it follows that the same inequality holds along the integral curve $\gamma$ of
$\alpha_1$ passing through $x_0$.
From the third identity of \eqref{ric2}, we obtain that along $\gamma$ it holds
\begin{eqnarray*}
\alpha_{1}(\trace\varphi)&=&(\trace\varphi)^{2}-2\det\varphi+2
=(\trace\varphi)^{2}-4\det\varphi+2(\det\varphi+1)\\
&\geq& 2(\det\varphi+1)\\
&>&0,
\end{eqnarray*}
which leads to a contradiction. Therefore, $h_{+}\circ h_{-}^{-1}$ is a strictly length decreasing map.

(c) To compute the singular values of the map $h_{+}\circ h_{-}^{-1}$ we proceed as follows. Fix a point $x_0$ in
$V$ and suppose that $\{\alpha_1=\zeta,\alpha_2,\alpha_3\}\in T_{x}V$
and $\beta_2,\beta_3\in T_{f(x)}\S^2$ are orthonormal basis of the singular decomposition of $f$.
Then, from \eqref{dif1}, \eqref{dif2}, \eqref{dif3} and \eqref{dif4}, we get the expressions for the 
singular values
of $h_+\circ h_-^{-1}$. This completes the proof.
\end{proof}

Let us see now the converse. Suppose that
$G:V\subset\S^2_-\to\S^2_-\times\S^2_+\simeq\mathbb{G}_2(\R^4)$ is the graph of a smooth
map $g:V\subset\S^2_-\to\S^2_+$, where $V$ is an open domain of $\S^2$. Then,
for any $x\in V$, the element
$G(x)=x\oplus g(x)$
describes a plane in $\R^4$ and
$G(x)\cap\S^3$ gives rise to a great circle of $\S^3$. In order to explicitly describe the plane
generated by $G(x)$, we will use the quaternionic structure of $\R^4$. Recall that, as a vector space
the {\em quaternions} are
$$
\mathbb{H}=\{a_0+a_1 i+a_2 j+a_3k:a_0,a_1,a_2,a_3\in\R^{4}\}.
$$
They become an associative algebra with $1$ as the multiplicative unit via
$$
i^2=j^2=k^2=-1,\quad i\cdot j=-j\cdot i=k, \quad j\cdot k =-k\cdot j =i, \quad k\cdot i =-i\cdot k =j.
$$
We denote by $\Re \mathbb{H}$ the one dimensional linear subspace spanned by the element $1$ and $\Im \mathbb{H}$ the orthogonal complement of $\Re \mathbb{H}$. Hence,
an element
$x=x_0+x_1 i+x_2 j+x_3 k\in\mathbb{H}$ can be described by its {\em real
part} ${\rm Re}(x)=x_0$ and its {\em imaginary part}
${\rm Im}(x)=x_1 i+x_2 j+x_3 k$. Moreover, the {\em conjugate} $\overline{x}$
of $x$ is defined to be the quaternionic number
$$
\overline{x}=x_0-x_1 i-x_2 j-x_3 k.
$$
The euclidean inner product and the norm on $\R^4\simeq\mathbb{H}$ can be equivalently written in the 
form
$$\langle x, y\rangle=\Re(x\cdot \overline{y})=\Re(\overline{x}\cdot y)\quad\text{and}\quad
|x|^2=x\cdot \overline{x}=\overline{x}\cdot x,$$
for any $x,y\in\mathbb{H}$. Moreover, the standard outer product in $\R^3$ can be regarded as the
map $\times:{\rm Im}\mathbb{H}\times{\rm Im}\mathbb{H}\to{\rm Im}\mathbb{H}$ given by
$$x\times y={\rm Im}(x\cdot y),$$
for any $x,y\in {\rm Im}\mathbb{H}.$

Let us collect in the following lemma the most important properties
of the quaternionic multiplications; for more details see \cite[page 186]{gluck4}.
\begin{lemma}\label{Eq:Lemmaidentitiesquaternion}
The following identities hold:
\begin{enumerate}[\rm(a)]
\item For any $x,y,z\in\mathbb{H}$, we have
$\langle z\cdot x, z \cdot y\rangle=\langle x, y\rangle|z|^{2}=\langle x\cdot z, y \cdot z\rangle.$
\medskip
\item  For any $x \in \Im \mathbb{H}$, we have $x^{2}=-|x|^{2}.$
\medskip
\item For any $x,y\in\mathbb{H}$, we have $\overline{x\cdot y}=\overline{y}\cdot\overline{x}.$
\medskip
\item  For any $x, y \in \Im \mathbb{H}$, we have
$x\cdot y+y\cdot x=-2\langle x, y\rangle$; hence orthogonal imaginaries anti-commute.
\end{enumerate}
\end{lemma}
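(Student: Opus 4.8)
The plan is to verify the four identities one at a time, handling the conjugation-reversal rule (c) first, since (a), (b) and (d) will all lean on it.

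For (c), I would note that both sides of $\overline{x\cdot y}=\overline{y}\cdot\overline{x}$ are $\mathbb{R}$-bilinear in the pair $(x,y)$, so it suffices to check the equality on the basis $\{1,i,j,k\}$ of $\mathbb{H}$. The cases involving $1$ are immediate, and on the imaginary units one simply reads off the multiplication table: for instance $\overline{i\cdot j}=\overline{k}=-k=j\cdot i=(-j)\cdot(-i)=\overline{j}\cdot\overline{i}$, and $\overline{i\cdot i}=\overline{-1}=-1=i^2=(-i)\cdot(-i)=\overline{i}\cdot\overline{i}$; the remaining products behave the same way up to relabelling.

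Identity (b) is then essentially the definition of the norm: for $x\in\Im\mathbb{H}$ one has $\overline{x}=-x$, hence $|x|^2=x\cdot\overline{x}=-x\cdot x=-x^2$, which is the claim. Polarising (b) yields (d): for $x,y\in\Im\mathbb{H}$ the sum $x+y$ is again imaginary, so expanding $-|x+y|^2=(x+y)^2=x^2+x\cdot y+y\cdot x+y^2=-|x|^2+x\cdot y+y\cdot x-|y|^2$ and comparing with $|x+y|^2=|x|^2+2\langle x,y\rangle+|y|^2$ forces $x\cdot y+y\cdot x=-2\langle x,y\rangle$.

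For (a) I would first record the multiplicativity of the quaternionic norm, which follows from (c) together with $z\cdot\overline{z}=|z|^2\in\R$: indeed $|z\cdot w|^2=(z\cdot w)\cdot\overline{z\cdot w}=z\cdot w\cdot\overline{w}\cdot\overline{z}=|w|^2\,z\cdot\overline{z}=|z|^2|w|^2$, since the real scalar $|w|^2$ commutes with everything. Replacing $w$ by $x+y$ and expanding both sides, the cross terms give $\langle z\cdot x,z\cdot y\rangle=|z|^2\langle x,y\rangle$; the symmetric statement $\langle x\cdot z,y\cdot z\rangle=|z|^2\langle x,y\rangle$ comes out the same way from $|w\cdot z|^2=|w|^2|z|^2$ (alternatively, apply the first identity to the conjugates and use $\langle\overline{a},\overline{b}\rangle=\langle a,b\rangle$). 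There is no genuine obstacle here — the whole lemma is elementary algebra in $\mathbb{H}$ — the only point demanding care is the logical order: establish (c) before invoking it in (a), and (b) before using it in (d).
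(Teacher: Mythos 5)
Your argument is correct and complete. Note that the paper itself offers no proof of this lemma at all --- it simply collects the identities and refers the reader to Gluck--Warner--Ziller \cite[page 186]{gluck4} --- so there is nothing internal to compare against; your self-contained verification is exactly the standard one. The logical ordering you insist on (establish (c) before using it in the norm multiplicativity for (a), and (b) before polarising to get (d)) is handled properly, and each step --- bilinearity reducing (c) to the multiplication table, $\overline{x}=-x$ for $x\in\Im\mathbb{H}$ giving (b), polarisation of $x\mapsto x^2=-|x|^2$ giving (d), and polarisation of $|z\cdot w|^2=|z|^2|w|^2$ in $w$ giving (a) --- checks out.
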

Consider now the unit sphere $\S^{3}\subset \mathbb{H}$ as the subset of quaternions of length $1$.
Moreover, we consider $\S^2_{\pm}\subset{\rm Im}\mathbb{H}\subset\mathbb{H}$ as the subset of pure imaginary 
quaternions with length $1/\sqrt{2}$. Under these considerations, the space
$E_-$ is spanned by the vectors
$$
\left\{\frac{1\wedge i-j\wedge k}{2},\,\,\frac{1\wedge j+i\wedge k}{2},\,\,
\frac{1\wedge k-i\wedge j}{2}\right\}
$$
and $E_+$ by the vectors
$$
\left\{\frac{1\wedge i+j\wedge k}{2},\,\,\frac{1\wedge j-i\wedge k}{2},\,\,
\frac{1\wedge k+i\wedge j}{2}\right\}.
$$
By straightforward elementary computations, we obtain the following lemma.
\begin{lemma}\label{planevw}
Let $x\in V$. Then the following
facts hold:
\begin{enumerate}[\rm (a)]
\item If $g(x)\neq-x$, then the $2$-plane $G(x)=x\oplus g(x)$ is generated by the orthonormal
vectors
$$
\xi(x)=-\frac{x+g(x)}{|x+g(x)|}\in{\rm Im}\mathbb{H}\quad\&\quad \eta(x)=\sqrt{2}\,g(x)\cdot \xi(x)
=\frac{1-2g(x)\cdot x}{\sqrt{2}\,|x+g(x)|}\in \mathbb{H}.
$$
\item If $g(x)=-x$, then the $2$-plane $G(x)=x\oplus (-x)$ is generated by orthonormal vectors
which have one of the
following forms:
\begin{eqnarray*}
&&\xi_1(x)=\frac{x\times  i}{|x\times i|}\in{\rm Im}\mathbb{H}\,\,\,\&\,\,\, \eta_1(x)=-\sqrt{2}\,x\cdot \xi_1(x)
=\frac{-\sqrt{2}\,x\cdot (x\times i)}{|x\times i|}\in{\rm Im}\mathbb{H},\,\,\text{if}\,\,\, x_1\neq \pm 1,\\
&&\xi_2(x)=\frac{x\times j}{|x\times j|}\in{\rm Im}\mathbb{H}\,\,\,\&\,\,\, \eta_2(x)=-\sqrt{2}\,x\cdot \xi_2(x)
=\frac{-\sqrt{2}\,x\cdot(x\times j)}{|x\times j|}\in{\rm Im}\mathbb{H},\,\,\text{if}\,\, x_2\neq \pm 1,\\
&&\xi_3(x)=\frac{x\times k}{|x\times k|}\in{\rm Im}\mathbb{H}\,\,\,\&\,\,\, \eta_3(x)=-\sqrt{2}\,x\cdot \xi_3(x)
=\frac{-\sqrt{2}\,x\cdot(x\times k)}{|x\times k|}\in{\rm Im}\mathbb{H},\,\,\text{if}\,\, x_3\neq \pm 1.
\end{eqnarray*}
\end{enumerate}
\end{lemma}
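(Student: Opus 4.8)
The plan is to pull the statement back, through the identification $\mathbb{G}_2(\mathbb{R}^4)\simeq\mathbb{S}^2_-\times\mathbb{S}^2_+$ and the map \eqref{dirsum1}, to a bare assertion about $2$-vectors. By construction the oriented plane $G(x)=x\oplus g(x)$ is the one whose unit simple representative $\omega$ obeys $\tfrac12(\omega-*\omega)=x$ and $\tfrac12(\omega+*\omega)=g(x)$, once $E_\pm$ is identified with ${\rm Im}\,\mathbb{H}$ via the orthogonal bases listed above (the factor $\sqrt2$ in that identification will matter). So in each case it suffices to check (i) that the proposed pair of vectors is orthonormal, and (ii) that for $\omega$ equal to their wedge product, the anti-self-dual part $\tfrac12(\omega-*\omega)$ equals $x$ and the self-dual part $\tfrac12(\omega+*\omega)$ equals $g(x)$.

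For part (a), $g(x)\neq-x$ means $x+g(x)\neq 0$, so $\xi:=\xi(x)$ is a well-defined unit imaginary quaternion, and since $|g(x)|=1/\sqrt2$ the element $\hat g:=\sqrt2\,g(x)$ is a unit imaginary quaternion with $\eta(x)=\hat g\,\xi$. Orthonormality is immediate from Lemma \ref{Eq:Lemmaidentitiesquaternion}: $|\eta(x)|=|\hat g|\,|\xi|=1$, while $\langle\xi,\hat g\xi\rangle=\Re(\xi\,\overline{\hat g\xi})=\Re(\xi\bar\xi\,\overline{\hat g})=|\xi|^2\,\Re(\overline{\hat g})=0$ because $\hat g$ is imaginary. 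For (ii) I would write
$$
\mathrm{span}\{\xi(x),\eta(x)\}=\{q\,\xi:\,q\in\mathrm{span}_{\mathbb{R}}\{1,\hat g\}\}=R_\xi\big(\Pi_0\big),\qquad\Pi_0:=\mathrm{span}\{1,\hat g\},
$$
where $R_\xi(q)=q\,\xi$ is right translation. A one-line expansion in the basis $\{1,i,j,k\}$, using the explicit formula for the Hodge star operator, shows that $\Pi_0$ corresponds to $g(x)\oplus g(x)$. Finally, right translation by a unit quaternion lies in $SO(4)$, hence commutes with $*$ and respects the splitting $\Lambda^2(\mathbb{R}^4)=E_-\oplus E_+$; a short check (or the identity $R_i|_{E_+}=\mathrm{Id}$ propagated by $SO(3)$-equivariance) shows $R_\xi$ fixes $E_+$ pointwise and acts on $E_-\simeq{\rm Im}\,\mathbb{H}$ as the rotation by $\pi$ about the axis $\xi$. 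Hence the self-dual part of $G(x)$ stays $g(x)$, and its anti-self-dual part becomes $2\langle g(x),\xi\rangle\,\xi-g(x)$; using $|x+g(x)|^2=|x|^2+2\langle x,g(x)\rangle+|g(x)|^2=1+2\langle x,g(x)\rangle$ one computes $2\langle g(x),\xi\rangle\xi=x+g(x)$, so this equals $x$, which proves (a). Alternatively one can bypass the translation picture and simply expand $\xi(x)\wedge\eta(x)$ directly in $\Lambda^2(\mathbb{R}^4)$ and apply $*$; the identity $g(x)^2=-\tfrac12$ performs all the simplification.

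Part (b) is the degenerate case $g(x)=-x$, in which $x+g(x)=0$ and the plane $G(x)=x\oplus(-x)$ lies entirely inside ${\rm Im}\,\mathbb{H}$. Running the same computation with $\hat g=\sqrt2\,g(x)=-\sqrt2\,x$, any unit imaginary quaternion $\xi$ yields $\mathrm{span}\{\xi,\hat g\xi\}=R_\xi(\mathrm{span}\{1,\hat g\})$, whose self-dual part is automatically $g(x)=-x$ and whose anti-self-dual part is $2\langle -x,\xi\rangle\xi+x$; this equals $x$ exactly when $\xi\perp x$. Such a $\xi$ is furnished by $\xi_\ell=(x\times e_\ell)/|x\times e_\ell|$ for $(e_1,e_2,e_3)=(i,j,k)$, since $a\times b\perp a$; it is defined whenever $x\times e_\ell\neq 0$, and as $x\neq 0$ is proportional to at most one of $i,j,k$, at least two of the three choices are available. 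With this choice $\eta_\ell=-\sqrt2\,x\,\xi_\ell=\hat g\,\xi_\ell$ has unit length, is purely imaginary (its real part is $-\sqrt2\langle x,\xi_\ell\rangle=0$), and is orthogonal to $\xi_\ell$ by the same one-line computation as in (a); moreover any two admissible pairs $\{\xi_\ell,\eta_\ell\}$ span the common plane $G(x)$, which is the asserted consistency.

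The computations are elementary; the one place that requires care is the bookkeeping of orientations and normalisations — taking the wedge in the order for which $\xi(x)\wedge\eta(x)$, rather than its negative, is the representative intended in \eqref{dirsum1}, and tracking the factor $\sqrt2$ relating the $\Lambda^2$-norm on $E_\pm$ to the quaternionic norm on ${\rm Im}\,\mathbb{H}$ (this is precisely why $\hat g=\sqrt2\,g(x)$, and not $g(x)$ itself, is the unit quaternion that appears). There is no analytic or curvature content: everything follows from the quaternion identities of Lemma \ref{Eq:Lemmaidentitiesquaternion} together with the explicit action of the Hodge star operator, once the reduction in the first paragraph is made.
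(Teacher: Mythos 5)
The paper offers no proof of this lemma to compare against: it is introduced with the phrase ``By straightforward elementary computations, we obtain the following lemma,'' so your write-up is supplying an argument the authors omit entirely. Your proof is correct. The reduction in your first paragraph is the right one (the oriented plane $x\oplus g(x)$ is, by \eqref{dirsum1}, the class of the simple unit $2$-vector $\omega$ with $\tfrac12(\omega-*\omega)=x$ and $\tfrac12(\omega+*\omega)=g(x)$), and both computational inputs check out: expanding $1\wedge\hat g$ with the paper's explicit Hodge star formula and the displayed bases of $E_{\pm}$ shows that $\operatorname{span}\{1,\hat g\}$ has both components equal to $g(x)$, and right translation $R_{\xi}$ by a unit imaginary quaternion does lie in $SO(4)$, fix $E_{+}$ pointwise and act on $E_{-}$ as the rotation by $\pi$ about $\xi$ (one verifies it for $\xi=i$ and propagates, exactly as you indicate). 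The identity $2\langle g(x),\xi\rangle\xi=x+g(x)$ then follows from $|x|=|g(x)|=1/\sqrt{2}$, and your argument does track the orientation correctly, since $R_{\xi}(1\wedge\hat g)=\xi\wedge\eta$ in that order has anti-self-dual part $x$, not $-x$. Your organisation via right translation is arguably cleaner than the brute-force expansion of $\xi\wedge\eta$ in the basis $\{\varepsilon_i\wedge\varepsilon_j\}$ that the authors presumably intend. Two cosmetic remarks: since $|x|=1/\sqrt{2}$, the conditions $x_\ell\neq\pm1$ in part (b) should be read as $x_\ell\neq\pm 1/\sqrt{2}$, i.e.\ $x\times e_\ell\neq0$, which is precisely how you phrase it; and the real part of $\eta_\ell=-\sqrt{2}\,x\cdot\xi_\ell$ is $+\sqrt{2}\langle x,\xi_\ell\rangle$ rather than $-\sqrt{2}\langle x,\xi_\ell\rangle$, which is of course still zero, so nothing is affected.
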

Now we give the classification of Gluck and Warner \cite{gluck3} of the great circle fibrations of the $3$-sphere
following our approach.
\begin{theorem}\label{localhopf}
Suppose that $G:V\subset\S^2_-\to\S^2_-\times\S^2_+\simeq\mathbb{G}_2(\R^4)$ is the graph of
a smooth map $g:V\subset\S^2_-\to\S^2_+$, where $V$ is a path connected domain of the sphere.
The following statements hold:
\begin{enumerate}[\rm(a)]
\item Let $x_0$ be a point in $V$. Then, the circle $G(x_0)\cap\S^3$ can be represented by
$$
S(x_0,t)=\cos t\, \xi(x_0)+\sin t\,\eta(x_0),
$$
where $\xi$ and $\eta$ are the vectors obtained in Lemma {\rm \ref{planevw}}. In particular:
\medskip
\begin{enumerate}
\item[\rm (a$_1$)]
If $g(x_0)\neq \pm x_0$,
there exist an open neighbourhood $U_{x_0}\subset V$ and a positive number
$\varepsilon_{x_0}>0$, such that the map $S:U_{x_0}\times(-\varepsilon_{x_0},\varepsilon_{x_0})\to\S^3$ given by
$$
S(x,t)=\cos t\, \xi(x)+\sin t\,\eta(x)=\big(\cos t+\sqrt{2}\sin t\,g(x)\big)\cdot \xi(x),
$$
is a diffeomorphism.
\medskip
\item[\rm (a$_2$)] If $g(x)=x$ in an open set $U_{x_0}$ around $x_0$, then
$S:U_{x_0}\times(-\pi/2,\pi/2)\to\S^3$ given by
$$
S(x,t)=\cos t\, \xi(x)+\sin t\,\eta(x)=\frac{-\cos t\,x+\sqrt{2}\,\sin t}{\sqrt{2}},
$$
is a diffeomorphism. Geometrically, the map $S$ describes the projection from the poles
$\pm 1\in\S^3\subset\mathbb{H}$
onto the equator $\mathbb{S}^2=\S^3\cap{\rm Im}{\mathbb{H}}$.
\medskip
\item[\rm (a$_3$)] If $g(x)=-x$ in an open set $U$ around $x_0$ which
does not contain the point $\pm k\in \mathbb{H}$, then there exists a sufficiently small
open neighbourhood $U_{x_0}\subset U$ and a positive number $\varepsilon_{x_0}>0$
such that the map $S:U_{x_0}\times(-\varepsilon_{x_0},
\varepsilon_{x_0})\to\S^3$ given by
$$
S(x,t)=\cos t\, \xi(x)+\sin t\,\eta(x)=\frac{\cos t\,x\times k-\sqrt{2}\,\sin t\,x\cdot(x\times k)}{|x\times k|},
$$
is a diffeomorphism. In particular, the image of $S$ lies within $\S^2=\mathbb{S}^3\cap{\rm Im}\mathbb{H}$.
\end{enumerate}
\medskip
In each of the cases {\rm ($a_1$), ($a_2$) and ($a_3$)}, the map
$\pi:S\big(U_{x_0}\times(-\varepsilon_{x_0},\varepsilon_{x_0})\big)\subset\S^3\to U_{x_0}$ given
by
$$
\pi\big(\cos t\, \xi(x)+\sin t\,\eta(x)\big)=\big(\overline{\xi(x)}\cdot g(x)\cdot \xi(x)\big)=x,
$$
is a submersion with totally geodesic fibers, which are generated by the unit vector field $\zeta=dS_{x_0}(\partial_t)$.
Additionally, if $h:\S^2\to\S^2$ is a diffeomorphism, then $h\circ \pi$ gives rise to a submersion whose fibers are
generated again by $\zeta$. 
\medskip
\item
The great circle foliation
$$\mathcal{F}=\cup_{x\in U_{x_0}}G(x)\cap\S^3$$
is smooth if and only if $g:U_{x_0}\subset\S^2_-\to\S^2_+$ is strictly length decreasing.
\medskip
\item
If $g:\S^2_-\to\S^2_+$
is strictly length decreasing, then the maps $\pi$ given in {\rm (a)} generate a globally defined submersion
from $\S^3$ onto $\S^2_-$ with totally
geodesic fibers. Moreover, the quotient map $f$ is a Hopf fibration, and $\zeta$ is a Hopf vector field, if and
only if the map $g$ is constant.
\end{enumerate}
\end{theorem}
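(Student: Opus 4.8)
For part (a) I would work throughout in the quaternionic model $\R^4\simeq\mathbb{H}$ of Lemma \ref{planevw}. The first observation is that $G(x_0)\cap\S^3$ is exactly the unit circle lying in the $2$-plane $x_0\oplus g(x_0)$, and Lemma \ref{planevw} already exhibits an orthonormal basis $\{\xi(x_0),\eta(x_0)\}$ of that plane, so $G(x_0)\cap\S^3=\{\cos t\,\xi(x_0)+\sin t\,\eta(x_0)\}$. For the local-diffeomorphism assertions I would compute $dS$ at $(x_0,0)$. In case ($a_1$), where $g(x_0)\neq\pm x_0$, we have $x+g(x)\neq0$ near $x_0$ and $S(x,t)=(\cos t+\sqrt2\,\sin t\,g(x))\cdot\xi(x)$ is smooth there; then $dS_{(x_0,0)}(\partial_t)=\eta(x_0)$, while $dS_{(x_0,0)}|_{T_{x_0}V}=d\xi_{x_0}$. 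Two facts finish it: $d\xi_{x_0}$ is injective (a kernel vector $v$ forces $v+dg_{x_0}(v)$ to be proportional to $x_0+g(x_0)$, and comparing $E_-$-components with $v\perp x_0$ gives $v=0$), so its image is all of $T_{\xi(x_0)}\S^2$, a plane of \emph{purely imaginary} quaternions; and $\eta(x_0)$ has nonzero real part, namely $\Re\,\eta(x_0)=|x_0+g(x_0)|/\sqrt2$ (read off from $\eta(x_0)=(1-2g(x_0)\cdot x_0)/(\sqrt2\,|x_0+g(x_0)|)$). Hence $dS_{(x_0,0)}$ is an isomorphism onto $T_{\xi(x_0)}\S^2\oplus\R\,\eta(x_0)=T_{\xi(x_0)}\S^3$, and the inverse function theorem gives ($a_1$) after shrinking $U_{x_0}$ and choosing $\varepsilon_{x_0}$ small. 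Cases ($a_2$) and ($a_3$) run identically with the explicit $\xi,\eta$ of Lemma \ref{planevw}(b) (in ($a_2$), $\xi(x)=-\sqrt2\,x$ and $\eta(x)=1$), the image of $S$ now landing in the equator $\S^2=\S^3\cap\Im\mathbb{H}$ and being recognisable as the stated projection from the poles. Finally $\pi:=\mathrm{pr}_1\circ S^{-1}$ is a submersion, its fibres are great-circle arcs and hence totally geodesic, its vertical field is $\zeta=dS(\partial_t)=-\sin t\,\xi(x)+\cos t\,\eta(x)$, the identity $\pi=\overline{\xi}\,g\,\xi$ is a short quaternionic computation using $x^2=-|x|^2$ and the anticommutation of orthogonal imaginaries, and post-composing with a diffeomorphism of $\S^2$ changes neither the fibres nor $\zeta$.

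For part (b) the key ingredient is the intersection criterion for great circles. Writing oriented $2$-planes of $\R^4$ as unit decomposable $2$-vectors $\omega=p+q$, $\omega'=p'+q'$ with $p,p'\in E_-$ and $q,q'\in E_+$, and using that for planes with principal angles $\theta_1\le\theta_2$ one has (after fixing orientations) $\langle\omega,\omega'\rangle=\cos\theta_1\cos\theta_2$ and $\langle\omega,{\ast}\omega'\rangle=\sin\theta_1\sin\theta_2$, together with ${\ast}=-\mathrm{id}$ on $E_-$ and ${\ast}=\mathrm{id}$ on $E_+$, one obtains $\langle q,q'\rangle-\langle p,p'\rangle=\sin\theta_1\sin\theta_2$; hence the circles $\omega\cap\S^3$ and $\omega'\cap\S^3$ meet precisely when $\langle p,p'\rangle=\langle q,q'\rangle$. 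Applied to $G(x)$ and $G(x')$, whose $2$-vectors are $x+g(x)$ and $x'+g(x')$, this says the two great circles are disjoint iff $d_{\S^2}(g(x),g(x'))\neq d_{\S^2}(x,x')$ (the two spheres having the common radius $1/\sqrt2$). If $g$ is strictly length decreasing on $U_{x_0}$ it strictly decreases distances there, so the circles $G(x)\cap\S^3$ are pairwise disjoint; combined with (a), the map $S:U_{x_0}\times\S^1\to\S^3$ is an injective local diffeomorphism, i.e.\ a foliation chart, and $\mathcal F$ is smooth. For the converse I would bootstrap off the forward theory: if $\mathcal F$ is smooth then, after shrinking $U_{x_0}$ so that $f(U_{x_0})$ is simply connected, we are in the setting of Section \ref{sec2} — the fibration has a smooth unit field $\zeta$ and second fundamental form $\varphi$ obeying \eqref{ricatti} — so Lemma \ref{gw}(b) yields that $G=(h_-,h_+)$ is a strictly length decreasing graphical map; and a short Hodge-decomposition computation (from $p\wedge\zeta(p)=x+g(x)$ and ${\ast}(x+g(x))=-x+g(x)$) identifies $h_-\circ f$ with $\pi$ and $h_+\circ f$ with $g\circ\pi$, so $h_+\circ h_-^{-1}=g$ is strictly length decreasing.

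For part (c), when $g:\S^2_-\to\S^2_+$ is globally strictly length decreasing, the criterion of (b) makes all great circles $G(x)\cap\S^3$, $x\in\S^2_-$, pairwise disjoint, so the local submersions $\pi$ from (a) (taken around every point of every fibre) agree on overlaps and glue; their common domain $\bigcup_x G(x)\cap\S^3$ is open and, being the image of the compact $\S^2_-\times\S^1$ under $S$, also closed, hence all of $\S^3$, giving a global submersion $\pi:\S^3\to\S^2_-$ with totally geodesic fibres and quotient $f$. For the last equivalence: if $g\equiv q_0$, put $u=\sqrt2\,q_0\in\Im\mathbb{H}$ (a unit imaginary); then $\eta(x)=u\cdot\xi(x)$ and $u^2=-1$ give $u\cdot S(x,t)=\cos t\,\eta(x)-\sin t\,\xi(x)=\zeta(S(x,t))$, so $\zeta$ is the Hopf vector field $p\mapsto u\cdot p$ and $f$ a Hopf fibration; conversely, if $\zeta$ is a Hopf vector field then $\varphi=-\nabla\zeta|_{\mathcal H}$ is an orthogonal complex structure on $\mathcal H$, whence $\det\varphi=1$, $\trace\varphi=0$, $|\varphi|^2=2$ and $\trace(\varphi\circ J)=\pm2$, and substituting into Lemma \ref{gw}(c) forces $\mu_1=\mu_2=0$, i.e.\ $dg\equiv0$, so $g$ is constant on the connected $\S^2_-$.

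I expect the genuine difficulty to be the local-diffeomorphism computation in (a) — precisely the point that $d\xi_{x_0}$ has purely imaginary image while $\Re\,\eta(x_0)\neq0$, which is the infinitesimal form of "nearby great circles fan out" — together with the Hodge-theoretic intersection criterion in (b); once these are established, the gluing in (c) is a routine open-and-closed argument and the Hopf characterisation a direct substitution into Lemma \ref{gw}(c), the remaining work being bookkeeping to keep the two normalisations in $\mathbb{G}_2(\R^4)\simeq\S^2_-\times\S^2_+$ consistent and to check that the degenerate cases ($a_2$), ($a_3$) match the global picture.
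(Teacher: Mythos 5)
The paper itself supplies no argument for this theorem beyond the sentence ``follows by direct computations and using Lemmata \ref{Eq:Lemmaidentitiesquaternion} and \ref{planevw}'', so your sketch is filling a real gap, and your treatment of (b) (the Hodge--star intersection criterion $\langle\omega,\ast\omega'\rangle=0$, i.e.\ the circles meet iff $d(x,x')=d(g(x),g(x'))$) and of (c) (gluing by an open--closed argument, and the computation $u\cdot S(x,t)=\zeta(S(x,t))$ for $u=\sqrt2\,q_0$) is correct and entirely in the Gluck--Warner spirit. The problems are concentrated in part (a), at exactly the point you identify as the genuine difficulty.

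First, your injectivity argument for $d\xi_{x_0}$ is invalid. You argue that $d\xi_{x_0}(v)=0$ forces $v+dg_{x_0}(v)\parallel x_0+g(x_0)$ and then ``compare $E_-$-components''. But in Lemma \ref{planevw} the sum $x+g(x)$ is taken inside $\Im\mathbb{H}$, where both $\S^2_-$ and $\S^2_+$ have been identified with the \emph{same} sphere of radius $1/\sqrt2$; the summands $v$ and $dg_{x_0}(v)$ are not in complementary subspaces there and can cancel. Concretely, take $x_0=i/\sqrt2$, $g(x_0)=j/\sqrt2$ and a smooth $g$ with $dg_{x_0}(k)=-k$: then $d(x+g(x))(k)=0$, so $d\xi_{x_0}(k)=0$ and $dS_{(x_0,0)}$ is singular. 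Thus ($a_1$) is not provable for an arbitrary smooth $g$, and your argument, if it were correct, would prove something false. The statement (and your proof) can be rescued by importing the strict length-decreasing hypothesis: if $v+dg_{x_0}(v)=\mu\big(x_0+g(x_0)\big)$ with $v\perp x_0$ and $dg_{x_0}(v)\perp g(x_0)$, pairing with $g(x_0)$ gives $\langle v,g(x_0)\rangle=\tfrac{\mu}{2}|x_0+g(x_0)|^2$, and substituting back yields $|dg_{x_0}(v)|=|v|$, which contradicts $|dg_{x_0}(v)|<|v|$. You need to say this; the ``$E_\pm$-component'' step as written is wrong.

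Second, the claim that case ($a_3$) ``runs identically'' cannot be right. There $g(x)=-x$, so by Lemma \ref{planevw}(b) \emph{both} $\xi(x)$ and $\eta(x)=-\sqrt2\,x\cdot\xi(x)=-\sqrt2\,x\times\xi(x)$ are purely imaginary; the whole point of your argument in ($a_1$)--($a_2$), that $\Re\eta(x_0)\neq0$ supplies the direction transverse to $\operatorname{im}d\xi_{x_0}\subset\Im\mathbb{H}$, is unavailable. Indeed the image of $dS_{(x_0,0)}$ lies in the $2$-plane $T_{\xi(x_0)}\S^2=\xi(x_0)^{\perp}\cap\Im\mathbb{H}$, so $dS$ has rank at most $2$ and a map from the $3$-dimensional $U_{x_0}\times(-\varepsilon_{x_0},\varepsilon_{x_0})$ into $\S^3$ with image inside the equator $\S^2$ cannot be a diffeomorphism. (This is in fact a defect of the theorem's statement of ($a_3$) itself -- note that $g(x)=-x$ is an isometry, so it never occurs under the length-decreasing hypothesis used everywhere else in the paper -- but a proof attempt should detect this rather than assert that the previous argument carries over.) A smaller point: you verify invertibility of $dS$ only at $t=0$; this is fine after shrinking $\varepsilon_{x_0}$, but it is worth saying, since injectivity of $S$ on all of $U_{x_0}\times(-\varepsilon_{x_0},\varepsilon_{x_0})$ is what the inverse function theorem alone does not give.
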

The proof of Theorem \ref{localhopf} follows by direct computations and using Lemmata
\ref{Eq:Lemmaidentitiesquaternion} and \ref{planevw}.

\subsection{Applications} Let us collect some immediate applications of the classification theorem of the great circle fibrations of the $3$-sphere.

\begin{corollary}[Gluck \cite{gluck0}]
Let $\zeta$ be a divergence free unit vector field with totally geodesic integral curves.
The dual $1$-form associated with $\zeta$, gives rise to a contact structure of $\S^3$.
\end{corollary}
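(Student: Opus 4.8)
The plan is to express the contact condition for the dual $1$-form $\alpha:=\langle\zeta,\cdot\rangle$ entirely in terms of the second fundamental form $\varphi$ of the horizontal distribution $\mathcal H=\zeta^{\perp}$, reducing it to the nonvanishing of the scalar $\trace(\varphi\circ J)$, and then to extract that nonvanishing from the Riccati ODEs of Lemma~\ref{ricattieqns} together with the divergence-free hypothesis.

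\textbf{Step 1: reduction to a scalar condition.} Since the Levi-Civita connection is torsion-free, $d\alpha(X,Y)=\langle\nabla_X\zeta,Y\rangle-\langle\nabla_Y\zeta,X\rangle$ for all $X,Y\in\mathfrak X(\S^3)$. The integral curves of $\zeta$ are great circles, hence geodesics, so $\nabla_\zeta\zeta=0$; combined with $|\zeta|=1$ this gives $\iota_\zeta d\alpha=0$. Choosing a local orthonormal frame $\{\alpha_1=\zeta,\alpha_2,\alpha_3=J\alpha_2\}$ with dual coframe $\{\theta^1,\theta^2,\theta^3\}$ and using $\nabla_v\zeta=-\varphi(v)$ for $v\in\mathcal H$, one computes $d\alpha(\alpha_2,\alpha_3)=\varphi_{32}-\varphi_{23}=\trace(\varphi\circ J)$, whence
\[
d\alpha=\trace(\varphi\circ J)\,\theta^2\wedge\theta^3,\qquad
\alpha\wedge d\alpha=\trace(\varphi\circ J)\,\theta^1\wedge\theta^2\wedge\theta^3 .
\]
Thus $\alpha$ is a contact form on $\S^3$ precisely when $\trace(\varphi\circ J)$ vanishes nowhere, in which case the induced contact structure is exactly $\ker\alpha=\mathcal H$ and $\zeta$ is its Reeb vector field, because $\alpha(\zeta)=1$ and $\iota_\zeta d\alpha=0$.

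\textbf{Step 2: using $\div\zeta=0$.} By $\trace\varphi=-\div\zeta$, the hypothesis reads $\varphi_{22}+\varphi_{33}=0$ everywhere. Fix an integral curve $\gamma$ of $\zeta$ and a parallel orthonormal frame of $\gamma^{*}\mathcal H$, so that \eqref{ric1} holds along $\gamma$. Adding the first two equations of \eqref{ric1}, the left-hand side is $\zeta(\varphi_{22}+\varphi_{33})=\zeta(\trace\varphi)=0$ while the right-hand side equals $2+\varphi_{22}^{2}+\varphi_{33}^{2}+2\varphi_{23}\varphi_{32}=2+2\varphi_{22}^{2}+2\varphi_{23}\varphi_{32}$, so that $\varphi_{23}\varphi_{32}=-(1+\varphi_{22}^{2})\le-1$ pointwise along $\gamma$, and hence on all of $\S^3$. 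Therefore $\varphi_{23}$ and $\varphi_{32}$ are everywhere nonzero with opposite signs, giving $|\trace(\varphi\circ J)|=|\varphi_{32}-\varphi_{23}|=|\varphi_{32}|+|\varphi_{23}|\ge 2\sqrt{|\varphi_{23}\varphi_{32}|}\ge 2$. In particular $\trace(\varphi\circ J)$ is nowhere zero, and by Step~1 the form $\alpha$ is a contact form, which is the assertion of the corollary.

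\textbf{On the applicability of \eqref{ric1} and the main point.} The only step deserving care is that $\varphi$ satisfies \eqref{ricatti}, hence \eqref{ric1}, along every integral curve; this needs no globally defined submersion. Indeed $\mathcal H=\zeta^{\perp}$ is a smooth plane distribution, $\varphi(v)=-\nabla_v\zeta$ maps $\mathcal H$ into $\mathcal H$ because $\langle\nabla_v\zeta,\zeta\rangle=0$, and $\nabla^{\mathcal H}_\zeta\varphi=\varphi^2+I$ is merely the evolution of the shape operator of $\mathcal H$ along the unit geodesic field $\zeta$ in the curvature-$1$ space $\S^3$; only the Codazzi part of \eqref{ricatti}, which is never used here, would require integrability of $\mathcal H$. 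Alternatively, one can sidestep the divergence-free computation by invoking that the proof of Lemma~\ref{gw} already shows $\varphi_{23}-\varphi_{32}=-\trace(\varphi\circ J)$ to be nowhere zero for \emph{every} great circle fibration (there using the closedness of the great circles), which again yields the contact condition. In every version the entire content of the corollary is the nonvanishing of $\trace(\varphi\circ J)$, obtained most transparently — with the quantitative bound $|\trace(\varphi\circ J)|\ge 2$ — from the trace-free condition $\trace\varphi=0$.
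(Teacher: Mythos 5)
Your proof is correct, and while Step~1 coincides with the paper's computation $\omega\wedge d\omega=\trace(\varphi\circ J)\,\theta^1\wedge\theta^2\wedge\theta^3$, your Step~2 handles the key nonvanishing differently. The paper simply cites the fact, established inside the proof of Lemma~\ref{gw}, that $\trace(\varphi\circ J)$ is nowhere zero for \emph{any} great circle fibration; that argument never touches the divergence-free hypothesis, but it does lean on the global structure (the linear ODE $\zeta(\trace(\varphi\circ J))=(\trace\varphi)(\trace(\varphi\circ J))$ forces $\varphi_{23}=\varphi_{32}$ along an entire closed integral curve, and then $\zeta(\varphi_{22})=1+\varphi_{22}^2+\varphi_{23}^2>0$ contradicts periodicity). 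You instead use $\div\zeta=0$ head-on: summing the first two equations of \eqref{ric1} with $\trace\varphi\equiv 0$ gives $\varphi_{23}\varphi_{32}=-(1+\varphi_{22}^2)\le -1$ pointwise, hence the quantitative bound $|\trace(\varphi\circ J)|\ge 2$. This is a purely local, algebraic argument (you correctly note that the Riccati part of \eqref{ricatti} needs only that $\zeta$ is a unit geodesic field, not integrability of $\mathcal H$ or closedness of the orbits), and it yields an explicit lower bound the paper does not record; the trade-off is that it genuinely consumes the divergence-free hypothesis, whereas the paper's route shows the contact property holds for every great circle fibration. Your frame-dependent intermediate quantity $\varphi_{23}\varphi_{32}$ is harmless since the final conclusion concerns the frame-invariant $\trace(\varphi\circ J)$. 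Both arguments are sound; yours is the more elementary and self-contained for the statement as given.
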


\begin{proof} Let us denote with $\omega$
the associated $1$-form to $\zeta$, i.e. the form given by $\omega(\alpha)=\langle\zeta,\alpha\rangle$, for all tangent vectors
$\alpha$. Recall that $\omega$ is a contact form in $\S^3$ if and only if
$\omega\wedge d\omega\neq 0.$
Let now $\{\alpha_1=\zeta,\alpha_2,\alpha_3\}$ be a local orthonormal frame and $\varphi$ the tensor introduced in
the second section. We compute
\begin{eqnarray*}
(\omega\wedge d\omega)(\alpha_1,\alpha_2,\alpha_3)&=&d\omega(\alpha_2,\alpha_3)
=\alpha_2(\omega(\alpha_3))-\alpha_3(\omega(\alpha_2))-\omega([\alpha_2,\alpha_3])\\
&=&\langle\nabla_{\alpha_2}\alpha_1,\alpha_3\rangle-\langle\nabla_{\alpha_3}\alpha_1,\alpha_2\rangle
=-\varphi_{23}+\varphi_{32}\\
&=&\trace(\varphi\circ J),
\end{eqnarray*}
where $J$ stands for the complex structure of $\mathcal{H}$.
Recall that in the proof of Lemma \ref{gw}(c) we proved that
$\trace(\varphi\circ J)$ is nowhere zero. Hence, $\omega$ is a contact form. This completes the proof.
\end{proof}

\begin{corollary}[Gluck \& Gu \cite{gluck1}]\label{gluckgu}
Let $\zeta$ be a divergence free unit vector field with totally geodesic integral curves.
Then, $\zeta$ is a Hopf vector field.
\end{corollary}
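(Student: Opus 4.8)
The plan is to use $\div(\zeta)=0$ to control $\varphi$ along the great circles via the Riccati equation, and then transport the problem, through the Gluck--Warner correspondence of Lemma~\ref{gw} and Theorem~\ref{localhopf}, to the rigidity of weakly conformal, strictly length decreasing self-maps of $\S^2$. \emph{Step 1 (Riccati).} The hypothesis $\div(\zeta)=0$ is exactly $\trace\varphi\equiv 0$ on $\S^3$. Inserting this into the third identity of \eqref{ric2}, $\zeta(\trace\varphi)=(\trace\varphi)^2-2(1+\det\varphi)+4$, the left-hand side vanishes, so $\det\varphi\equiv 1$. Since $\varphi$ is a trace-free endomorphism of the rank-two bundle $\mathcal H$ with $\det\varphi\equiv 1$, its characteristic polynomial is $\lambda^2+1$, hence $\varphi^2+I=0$; in particular the singular values of $\varphi$ satisfy $\lambda_1\lambda_2=1$, so $|\varphi|^2=\lambda_1^2+\lambda_2^2\ge 2$ everywhere, with equality precisely where $\varphi$ is an orthogonal complex structure of $\mathcal H$ (equivalently $\varphi=\pm J$).

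\emph{Step 2 (passing to the graph).} As $\zeta$ is globally defined, the quotient map $f:\S^3\to\S^2_-$ is a globally defined submersion with totally geodesic great circle fibers, $f(\S^3)=\S^2_-$ is simply connected, and so Lemma~\ref{gw} applies with $V=\S^3$. Normalising as in the proof of Lemma~\ref{gw}(b) so that $h_-$ is a diffeomorphism, we obtain the globally defined strictly length decreasing smooth map $g=h_+\circ h_-^{-1}:\S^2_-\to\S^2_+$ of Theorem~\ref{localhopf}(c). Substituting $\trace\varphi=0$ and $\det\varphi=1$ into the two formulas of Lemma~\ref{gw}(c), both collapse to the single expression
\[
\mu_1\circ f=\mu_2\circ f=\frac{\sqrt{|\varphi|^2-2}}{\,2+\trace(\varphi\circ J)\,},
\]
whose denominator is positive since $h_-$ is a diffeomorphism (equivalently $\trace(\varphi\circ J)>0$, as in the proof of Lemma~\ref{gw}(b)). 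Hence $g$ has equal singular values at every point, i.e. $g$ is weakly conformal, and by Lemma~\ref{gw}(b) it remains strictly length decreasing.

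\emph{Step 3 (rigidity and conclusion).} A weakly conformal map out of a surface is harmonic, and then the graph map $(\mathrm{id},g)$ is a weakly conformal harmonic --- hence minimal --- immersion of $\S^2_-$ into $\S^2_-\times\S^2_+$, so $G=\Gamma_g$ is a strictly length decreasing minimal surface. By the Bernstein type theorem \cite[Theorem A]{savas1}, $G$ must be totally geodesic; since the singular values are then constant yet still strictly less than $1$, they must vanish, i.e. $g$ is constant. By Theorem~\ref{localhopf}(c) this says exactly that $\zeta$ is a Hopf vector field and $f$ a Hopf fibration. Equivalently, $|\varphi|^2\equiv 2$ and $\varphi$ is an orthogonal complex structure of $\mathcal H$, which is the conclusion reached for the harmonic case in Theorem~\ref{mythmb}.

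The computations in Steps~1 and 2 are routine; the only genuinely substantial ingredient is the rigidity used in Step~3, applied exactly as in the proof of Theorem~\ref{mythmb}. If one prefers to avoid \cite{savas1}, the same conclusion can be reached elementarily: a weakly conformal harmonic map $\S^2\to\S^2$ is real-analytic, hence holomorphic or anti-holomorphic for suitable orientations, so $\int_{\S^2_-}\mu_1\mu_2\,d\vol=|\deg g|\,\vol(\S^2_+)$; since $\mu_1\mu_2<1$ this is compatible only with $\deg g=0$, and a degree-zero $($anti-$)$holomorphic self-map of $\S^2$ is constant.
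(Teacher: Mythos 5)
Your argument is correct and follows the paper's own proof essentially verbatim: $\operatorname{div}\zeta=0$ gives $\trace\varphi=0$, the Riccati identity of Lemma~\ref{ricattieqns}(b) then forces $\det\varphi=1$, Lemma~\ref{gw}(c) shows the singular values of $g$ coincide so that $g$ is weakly conformal and its graph is a strictly length decreasing minimal surface, and the Bernstein theorem of \cite{savas1} together with Theorem~\ref{localhopf}(c) yields that $g$ is constant and $\zeta$ is a Hopf vector field. The degree-theoretic coda you offer as a way to bypass \cite{savas1} is a pleasant independent check but is not part of, nor needed for, the paper's argument.
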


\begin{proof}
Since $\zeta$ is divergence free, then $\trace\varphi=0$. Then, from Lemma \ref{ricattieqns}(b) it follows that $\det\varphi=1$.
Hence, from Lemma \ref{gw}(c) it follows that the singular values of $g$ are equal. Consequently, $g$ is a conformal map. This
implies that the graph of $g$ is a minimal surface in $\S^2_-\times\S^2_+$; see for example \cite[Proposition 4.5.3]{baird1},
\cite{eells} or \cite{savas-4}.
On the other hand, $g$ is strictly length decreasing. From \cite[Theorem A]{savas1} it follows that $g$ must be constant.
Consequently, from Theorem \ref{localhopf}(c) we deduce that $\pi$ is a Hopf fibration and so $\zeta$ is a Hopf vector
field. This completes the proof.
\end{proof}

\begin{corollary}[Heller \cite{heller}]\label{heller}
Let $f:\S^3\to\S^2$ be a weakly conformal submersion with totally geodesic fibers. Then the map
$f$ is the composition of a Hopf fibration with a conformal diffeomorphism of $\S^2$.
\end{corollary}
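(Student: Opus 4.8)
The plan is to reduce the statement to showing that the unit vector field $\zeta$ generating the fibers of $f$ is a Hopf vector field, and to obtain this by proving that the length decreasing map $g$ attached to the fibration in Lemma \ref{gw} is weakly conformal; the final step then runs exactly as in the proof of Corollary \ref{gluckgu}.

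First I would handle the soft part. Since $f$ is a submersion, its differential is everywhere surjective, so ``weakly conformal'' here means genuinely conformal: there is a positive function $\lambda$ on $\S^{3}$ (the dilation) with $f^{*}g_{\S^{2}}=\lambda^{2}g$ on the horizontal bundle $\mathcal H$. Assume for a moment that $\zeta$ is a Hopf vector field. Then the integral curves of $\zeta$ are the fibers of a model Hopf fibration $\pi:\S^{3}\to\S^{2}$, so $f$ and $\pi$ have the same fibers; consequently $f=\psi\circ\pi$ for a unique diffeomorphism $\psi$ of $\S^{2}$ (namely, $\psi$ sends the $\pi$-image of a fiber to its $f$-image; $\psi$ and $\psi^{-1}$ are smooth because both $\pi$ and $f$ admit local sections). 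Since $\pi$ is, up to a homothety of the target, a Riemannian submersion, $d\pi_{x}|_{\mathcal H_{x}}$ is a fixed multiple of a linear isometry, hence $d\psi_{\pi(x)}=df_{x}|_{\mathcal H_{x}}\circ(d\pi_{x}|_{\mathcal H_{x}})^{-1}$ is at every point a positive multiple of an isometry; thus $\psi$ is a conformal diffeomorphism of $\S^{2}$ and $f=\psi\circ\pi$ has the desired form. (One works on $V=\S^{3}$ minus a fiber, which is saturated with $f(V)$ simply connected, so Lemma \ref{gw} and Theorem \ref{localhopf} apply; being a Hopf vector field on the dense set $V$ propagates to all of $\S^{3}$.)

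The crux is the claim that conformality of $f$ forces the second fundamental form $\varphi$ of $\mathcal H$ to commute with the complex structure $J$, i.e. $\varphi_{22}=\varphi_{33}$ and $\varphi_{23}=-\varphi_{32}$. The point is that the degenerate symmetric $2$-tensor $\bar g=f^{*}g_{\S^{2}}$ is invariant under the flow of $\zeta$, since it is the pull-back of a tensor under a map sending $\zeta$ to the zero vector field; hence $\mathcal L_{\zeta}\bar g=0$. Evaluating on horizontal vector fields $V,W$ and using that $\nabla_{\zeta}V$ and $\nabla_{V}\zeta=-\varphi(V)$ are again horizontal (the fibers are geodesics, so $\nabla_{\zeta}\zeta=0$), one computes that $\mathcal L_{\zeta}\bar g(V,W)=0$ is equivalent to
$$
g(\varphi V,W)+g(V,\varphi W)=\tfrac{\zeta(\lambda^{2})}{\lambda^{2}}\,g(V,W),
$$
that is, the symmetric part of $\varphi|_{\mathcal H}$ is a multiple of the identity; since the skew part of any $2\times 2$ tensor commutes with $J$, this proves the claim. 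It follows that
$$
|\varphi|^{2}-2\det\varphi=(\varphi_{22}-\varphi_{33})^{2}+(\varphi_{23}+\varphi_{32})^{2}=0,
$$
and substituting this into the formulas of Lemma \ref{gw}(c) gives $\mu_{1}=\mu_{2}$, so $g$ is weakly conformal.

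From this point the argument is that of Corollary \ref{gluckgu}: a weakly conformal $g$ has a minimal graph in $\S^{2}_{-}\times\S^{2}_{+}$ (see \cite[Proposition 4.5.3]{baird1}, \cite{eells} or \cite{savas-4}), and $g$ is strictly length decreasing by Lemma \ref{gw}(b), so \cite[Theorem A]{savas1} forces $g$ to be constant; by Theorem \ref{localhopf}(c) this means $\zeta$ is a Hopf vector field and the quotient map a Hopf fibration, and the reduction above finishes the proof. The main obstacle is the displayed computation identifying ``$f$ conformal'' with ``$\varphi$ commutes with $J$'': one must handle carefully the interplay between $\mathcal L_{\zeta}$ and the splitting $T\S^{3}=\mathcal V\oplus\mathcal H$, which is not invariant under the flow of $\zeta$. (Alternatively one can argue more softly: a weakly conformal submersion onto a surface with minimal --- here totally geodesic --- fibers is harmonic by the Baird--Eells theorem, so $\zeta$ is a harmonic unit vector field by Theorem \ref{thma}, and Theorem \ref{mythmb} applies; but the route above stays within the methods of this section.)
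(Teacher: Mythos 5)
Your proof is correct, and its overall architecture is the same as the paper's: extract the identities $\varphi_{22}=\varphi_{33}$ and $\varphi_{23}=-\varphi_{32}$ from conformality of $f$, deduce $|\varphi|^{2}=2\det\varphi$ so that Lemma \ref{gw}(c) makes the associated map $g$ weakly conformal, then run the Bernstein argument of Corollary \ref{gluckgu} (\cite[Theorem~A]{savas1} plus Theorem \ref{localhopf}(c)) to conclude that $\zeta$ is Hopf, and finally factor $f=h\circ\pi$ with $h$ conformal. The one step you do genuinely differently is the derivation of the key pointwise identity \eqref{hellerequalities}: the paper differentiates the relations $\langle df(\alpha_i),df(\alpha_j)\rangle=\lambda^{2}\delta_{ij}$ along the fiber direction and exploits the symmetry of the Hessian $B_f$ together with $df(\alpha_1)=0$, whereas you observe that the flow of $\zeta$ preserves the fibers, hence $\mathcal L_{\zeta}\bigl(f^{*}g_{\S^{2}}\bigr)=0$, and expand the Lie derivative to see that the symmetric part of $\varphi|_{\mathcal H}$ is a multiple of the identity. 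The two computations are equivalent in content, but your version is slightly more conceptual (it makes the mechanism --- invariance of the degenerate metric $f^{*}g_{\S^{2}}$ under the fiber flow --- explicit and avoids handling the pullback connection), at the cost of having to check that the vertical components of $[\zeta,V]$ are harmless, which you do correctly since $f^{*}g_{\S^{2}}$ annihilates $\mathcal V$ and $\nabla_{\zeta}\zeta=0$. Your closing reduction to $f=\psi\circ\pi$ and the parenthetical alternative via the Baird--Eells theorem are both fine, though the latter would lean on results proved only in the following section.
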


\begin{proof}
Choose a local orthonormal frame $\{\alpha_1,\alpha_2,\alpha_3\}$, such that $\alpha_1\in\ker df$.
Denote also by $\lambda^2$ the conformal factor of $f$, i.e.
\begin{equation}\label{confphi}
\lambda^2=\langle df(\alpha_2),df(\alpha_2)\rangle=\langle df(\alpha_3),df(\alpha_3)\rangle>0.
\end{equation}
In this case it holds $|df|^2=2\lambda^2>0$ and so $\lambda$ is a smooth function. Differentiating the identity
of \eqref{confphi} with respect to $\alpha_1$, we deduce that
\begin{eqnarray*}
\lambda\alpha_1(\lambda)&=&\langle \nabla_{\alpha_1}df(\alpha_2),df(\alpha_2)\rangle=
\langle B(\alpha_1,\alpha_2)+df(\nabla_{\alpha_1}\alpha_2),df(\alpha_2)\rangle\\
&=&\langle B(\alpha_1,\alpha_2),df(\alpha_2)\rangle=\langle \nabla_{\alpha_2}df(\alpha_1)-df(\nabla_{\alpha_2}\alpha_1),df(\alpha_2)\rangle\\
&=&\lambda^2\varphi_{22},
\end{eqnarray*}
and
\begin{eqnarray*}
\lambda\alpha_1(\lambda)&=&\langle \nabla_{\alpha_1}df(\alpha_3),df(\alpha_3)\rangle=
\langle B(\alpha_1,\alpha_3)+df(\nabla_{\alpha_1}\alpha_3),df(\alpha_3)\rangle\\
&=&\langle B(\alpha_1,\alpha_3),df(\alpha_3)\rangle=\langle \nabla_{\alpha_3}df(\alpha_1)-df(\nabla_{\alpha_3}\alpha_1),df(\alpha_3)\rangle\\
&=&\lambda^2\varphi_{33}.
\end{eqnarray*}
Moreover,
\begin{eqnarray*}
0&=&\alpha_1\langle df(\alpha_2),df(\alpha_3)\rangle\\
&=&
\langle B(\alpha_1,\alpha_2)+df(\nabla_{\alpha_1}\alpha_2),df(\alpha_3)\rangle
+\langle B(\alpha_1,\alpha_3)+df(\nabla_{\alpha_1}\alpha_3),df(\alpha_2)\rangle\\
&=&\langle B(\alpha_1,\alpha_2),df(\alpha_3)\rangle+\langle B(\alpha_1,\alpha_3),df(\alpha_2)\rangle
+\lambda^2\langle \nabla_{\alpha_1}\alpha_2,\alpha_3\rangle
+\lambda^2\langle \nabla_{\alpha_1}\alpha_3,\alpha_2\rangle\\
&=&-\langle df(\nabla_{\alpha_2}\alpha_1),df(\alpha_3)\rangle-\langle df(\nabla_{\alpha_3}\alpha_1),df(\alpha_2)\rangle\\
&=&\lambda^2(\varphi_{23}+\varphi_{32}).
\end{eqnarray*}
From the above relations we see that
\begin{equation}\label{hellerequalities}
\varphi_{23}=-\varphi_{32}\quad\text{and}\quad\varphi_{22}=\varphi_{33}=\alpha_1(\log\lambda).
\end{equation}
As a consequence, $|\varphi|^2=2\det\varphi.$
Hence, from Lemma \ref{gw}(c)
we see that the singular values of $g$ are everywhere equal. As in the proof of Corollary \ref{gluckgu}, we show that $g$ is constant.
This implies that $\alpha_1$ is a Hopf vector field and the corresponding projection $\pi$
a Hopf fibration. Observe that there exist a diffeomorphism $h:\S^2\to\S^2$
such that $f=h\circ\pi$. Since $f$ is a weakly conformal submersion and $\pi$ a Riemannian
submersion, we deduce that $h$ is a conformal diffeomorphism. This completes the proof.
\end{proof}

\begin{remark}
In \cite{heller} Heller proves a more general result. More precisely, he shows that
{\em up to conformal transformations of $\S^2$ and $\S^3$, every conformal fibration
of $\S^3$ by circles $($not necessarily great circles$)$ is the Hopf fibration.}
\end{remark}

\begin{corollary}[Escobales \cite{escobales}]
Suppose that $f:V\subset\S^3\to\S^2$ is a submersion with totally geodesic fibers and equal constant singular values
defined in an open neighbourhood $V$ of $\S^3$. Then $f$ is a Hopf fibration.
\end{corollary}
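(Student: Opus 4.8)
The plan is to reduce to the weakly conformal case already handled in Corollary~\ref{heller} and then use the extra hypothesis --- constancy of the singular values --- to kill the residual conformal freedom. Let $\zeta$ be the unit vector field generating the fibers, let $\mathcal{H}=(\ker df)^{\perp}$ be the horizontal plane bundle and let $\varphi(v)=-\nabla_{v}\zeta$ be the second fundamental form of $\mathcal{H}$. Fixing locally an orthonormal frame $\{\alpha_{1}=\zeta,\alpha_{2},\alpha_{3}\}$ adapted to the singular value decomposition of $f$, the assumption that the two non-zero singular values of $f$ coincide says exactly that $f$ is a weakly conformal submersion, say $\langle df(\alpha_{2}),df(\alpha_{2})\rangle=\langle df(\alpha_{3}),df(\alpha_{3})\rangle=\lambda^{2}$ and $\langle df(\alpha_{2}),df(\alpha_{3})\rangle=0$, and the assumption that they are moreover constant says that $\lambda$ is a constant function.

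First I would run the computation from the proof of Corollary~\ref{heller} verbatim: weak conformality of $f$ forces the identities \eqref{hellerequalities}, in particular $\varphi_{22}=\varphi_{33}=\alpha_{1}(\log\lambda)$. Since $\lambda$ is constant, $\alpha_{1}(\log\lambda)=0$, hence $\varphi_{22}=\varphi_{33}=0$ and therefore $\trace\varphi=-\div(\zeta)=0$; that is, $\zeta$ is a divergence free unit vector field with totally geodesic integral curves. By Corollary~\ref{gluckgu} of Gluck and Gu it follows that $\zeta$ is a Hopf vector field and that its quotient map $\pi\colon\S^{3}\to\S^{2}$ (restricted to $V$ if necessary, via Theorem~\ref{localhopf}) is a Hopf fibration. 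One could instead invoke Corollary~\ref{heller} directly to obtain $f=h\circ\pi$ with $\pi$ a Hopf fibration and $h$ a conformal diffeomorphism of $\S^{2}$; I prefer the route through Corollary~\ref{gluckgu} because it makes the use of the constancy hypothesis transparent.

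It remains to identify $f$ itself with a Hopf fibration. Since $f$ and $\pi$ have the same fibers --- the integral curves of $\zeta$ --- there is a diffeomorphism $h$ of $\S^{2}$ with $f=h\circ\pi$. For horizontal vectors $v,w$ we have $df(v)=dh(d\pi(v))$, and since $\pi$ is a Riemannian submersion onto the round sphere while $df$ rescales horizontal lengths by the constant $\lambda$, this gives $h^{*}g_{\S^{2}}=\lambda^{2}g_{\S^{2}}$ with $\lambda$ constant; in other words $h$ is a homothety of the round $2$-sphere. (That $\lambda$ is the expected constant can also be read off by comparing Gaussian curvatures, $K_{h^{*}g}=K_{g}\circ h$ versus $K_{\lambda^{2}g}=\lambda^{-2}K_{g}$, but this is not needed.) Since by the conventions fixed in the Introduction the class of Hopf fibrations is closed under post-composition with isometries and with the standard radial homothety of $\S^{2}$, the map $f=h\circ\pi$ is a Hopf fibration, which completes the proof. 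The only genuinely delicate point is this last paragraph: weak conformality alone leaves the non-trivial conformal automorphisms of $\S^{2}$ in play (that is why Corollary~\ref{heller} only gives $f$ up to such an automorphism), and it is precisely the constancy of the singular values that forces the automorphism to be a homothety; a secondary, more bookkeeping-type point is the restriction to $V$ when $V\neq\S^{3}$, which is taken care of by Theorem~\ref{localhopf}.
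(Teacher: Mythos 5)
Your first step coincides with the paper's: the computation behind Corollary \ref{heller} yields \eqref{hellerequalities}, and constancy of $\lambda$ forces $\varphi_{22}=\varphi_{33}=0$, hence $\trace\varphi=-\operatorname{div}(\zeta)=0$. The gap is in the next step. You then discard everything except $\trace\varphi=0$ and invoke Corollary \ref{gluckgu}; but the proof of that corollary rests on the Bernstein theorem of \cite{savas1}, which requires the associated graphical map $g$ to be defined on the whole of $\S^2_-$, i.e.\ it requires $V=\S^3$. The statement you are proving is local ($V$ is an open subset of $\S^3$), and for a proper such $V$ the chain ``$\trace\varphi=0\Rightarrow\det\varphi=1\Rightarrow g$ conformal'' only tells you that $g$ is a conformal, strictly length decreasing map on an open subset of $\S^2_-$, which a priori need not be constant. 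So the appeal to Corollary \ref{gluckgu} does not close the argument in the generality in which the corollary is stated; it would only do so after restricting to $V=\S^3$.

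The paper's proof avoids this by keeping the full strength of \eqref{hellerequalities}: besides $\varphi_{22}=\varphi_{33}=0$ one also has $\varphi_{23}+\varphi_{32}=0$. Feeding $\trace\varphi=0$ into the Riccati identity \eqref{ric2} gives $\det\varphi=1$, hence $\varphi_{23}^2=1$, so that $|\varphi|^2=2=2\det\varphi$ and $(1-\det\varphi)^2+(\trace\varphi)^2=0$. Lemma \ref{gw}(c) then shows that \emph{both} singular values of $g$ vanish identically, so $g$ is constant by a purely pointwise, local argument with no Bernstein-type input; Theorem \ref{localhopf}(c) (equivalently, the closing lines of Corollary \ref{heller}) then identifies $f$ as a Hopf fibration. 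Your final paragraph --- using constancy of $\lambda$ to force the conformal diffeomorphism $h$ to be a homothety --- is fine and matches what the paper means by ``following the same lines as in Corollary \ref{heller}''; the defect lies only in how you pass from $\varphi_{22}=\varphi_{33}=0$ to the constancy of $g$. If you retain $\varphi_{23}=-\varphi_{32}$ and run the two lines above, your argument closes and becomes the paper's.
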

\begin{proof}
By assumption, the singular values of $f$ are $0,\lambda,\lambda$, where $\lambda$ is a non-zero constant.
From the formulas \eqref{hellerequalities}, we deduce that
$\varphi_{22}=\varphi_{33}=0$ and $\varphi_{23}+\varphi_{32}=0.$
From Lemma \ref{ricattieqns}(b) it follows that $\det\varphi=1$, which implies that
$\varphi_{23}=-1$ and $\varphi_{32}=1.$
Now
from Lemma \ref{gw}(c) the singular values of $g$ are zero, which implies that
$g$ is constant. Following the same lines as in Corollary \ref{heller},  we deduce that
$f$ is a Hopf fibration. This completes the proof.
\end{proof}

\section{Harmonic and minimal unit vector fields}\label{sec3}

Let us assume now that $g:V\subset\S^2_-\to\S^2_+$ is strictly length decreasing, where $V$ is an open saturated set. Fix a point $x_0\in\S^2_-$.
Then from Theorem \ref{localhopf}, it follows that there exists a sufficiently small neighbourhood $U_{x_0}$
around the point $x_0$ where the map $\xi$ is a diffeomorphism.  Hence, by setting $y=\xi(x)$
and $h=\sqrt{2}\,g\circ \xi^{-1}$,
we may reparametrise the foliation by $\vartheta:\xi(U_{x_0})\times[-\pi,\pi]\to\S^3$
given by
$$
\vartheta(y,t)=\cos t\, y+\sin t\, h(y)\cdot y.
$$
Observe that $h$ a smooth map from an open domain of the unit $2$-sphere
$\S^3\cap{\rm Im}\mathbb{H}$ with values in the same sphere. According
to Theorem \ref{localhopf}, we have that
$$
\zeta(\vartheta(y,t))=h(y)\cdot \vartheta(y,t)\quad\text{and}\quad \pi(\vartheta(y,t))=\frac{\overline{y}\cdot h(y)\cdot y}{\sqrt{2}}.
$$
On the other hand, one can readily check that for any $(y,t)$ it holds
\begin{equation}\label{pi}
\pi(\vartheta(y,t))=\frac{\overline{\vartheta(y,t)}\cdot h(y)\cdot\vartheta(y,t)}{\sqrt{2}},
\end{equation}
Hence, for $p=\vartheta(y,t)$ we deduce the unit vector field $\zeta$ generating the leaves of the foliation is related with $\pi$ via the
equations
\begin{equation}\label{Eq: relationfe1}
\zeta(p)=p\cdot f(p)\quad\text{and}\quad f(p)=\overline{p}\cdot \zeta(p),
\end{equation}
for any $p \in V\subset\mathbb{S}^{3}$, where $f=\sqrt{2}\,\pi:\S^3\to\S^2$.

\subsection{Harmonic unit vector fields} We would like to relate the Hessian of the vector field $\zeta$ with the Hessian of the quotient map $f$
given in \ref{Eq: relationfe1}.
We need the following auxiliary lemma.

\begin{lemma}
Let $\zeta$ be a unit vector field with totally geodesic integral curves defined in an open
saturated  neighbourhood
$V$ of $\S^3$ and $f:V\subset\S^3\to\S^2$ the corresponding quotient map given in {\rm\ref{Eq: relationfe1}}. Then,
\begin{equation}\label{Eq: df}
\langle df(\alpha_i), df(\alpha_j)\rangle=\langle \alpha_i, \alpha_j\rangle
+\langle \varphi(\alpha_i), \varphi(\alpha_j)\rangle-\langle \varphi(\alpha_i), J\alpha_j \rangle
-\langle\varphi(\alpha_j), J\alpha_i\rangle,
\end{equation}
where $i,j\in\{2,3\}$, $\alpha_2, \alpha_3 \in \mathcal{H}$ and $J$ is the complex structure of $\mathcal{H}$.
\end{lemma}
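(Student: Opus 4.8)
The plan is to compute $df$ directly from the relation $f(p)=\overline{p}\cdot\zeta(p)$ established in \eqref{Eq: relationfe1}, differentiate in the direction of a horizontal vector, and then take inner products, using the quaternionic identities of Lemma \ref{Eq:Lemmaidentitiesquaternion} together with the defining property $\varphi(\alpha)=-\nabla_\alpha\zeta$ of the second fundamental form and the fact that on $\mathcal{H}$ the complex structure $J$ is realised by left quaternionic multiplication by $\zeta$ (equivalently, $J\alpha=\zeta\cdot\alpha$ modulo the appropriate sign conventions, since $\zeta=p\cdot f(p)$ and $f(p)$ is a unit imaginary quaternion, so $f(p)^2=-|f(p)|^2$ by Lemma \ref{Eq:Lemmaidentitiesquaternion}(b)). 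First I would fix $p\in V$, pick $\alpha\in\mathcal{H}$, and extend it to a curve on $\S^3$ through $p$; differentiating $f=\overline{p}\cdot\zeta$ gives a term involving $\overline{\alpha}\cdot\zeta$ and a term involving $\overline{p}\cdot\nabla_\alpha\zeta=-\overline{p}\cdot\varphi(\alpha)$, plus a correction coming from the ambient $\R^4=\mathbb{H}$ connection versus the connection on $\S^3$ — but since $f$ takes values in $\S^2\subset\operatorname{Im}\mathbb{H}$ and we only need the tangential part for inner products between $df(\alpha_i)$ and $df(\alpha_j)$, the normal (radial) corrections will drop out.

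The key computational step is to express $\langle df(\alpha_i),df(\alpha_j)\rangle$ using Lemma \ref{Eq:Lemmaidentitiesquaternion}(a), namely $\langle z\cdot x, z\cdot y\rangle = |z|^2\langle x,y\rangle$ with $z=\overline{p}$ and $|p|=1$, so that left-multiplication by $\overline{p}$ is an isometry. Thus $\langle df(\alpha_i),df(\alpha_j)\rangle$ reduces to the inner product of $\alpha_i - \zeta\cdot(\text{something}) - \varphi(\alpha_i)$-type expressions in $\mathbb{H}$, and expanding bilinearly produces exactly four groups of terms: $\langle\alpha_i,\alpha_j\rangle$, $\langle\varphi(\alpha_i),\varphi(\alpha_j)\rangle$, and two cross terms. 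The cross terms are where $J$ enters: a term like $\langle\alpha_i, \zeta\cdot\text{image of }\alpha_j\rangle$ must be identified with $\langle\varphi(\alpha_i), J\alpha_j\rangle$, which follows once one checks that multiplication by $\zeta$ (on the correct side) acts on $\mathcal{H}$ as the complex structure $J$ — this is the natural Hopf complex structure and is compatible with the setup of Theorem \ref{localhopf} where $\zeta(\vartheta(y,t))=h(y)\cdot\vartheta(y,t)$ with $h(y)$ pure imaginary of unit length. One should also verify $\varphi$ maps $\mathcal{H}$ to $\mathcal{H}$ (already noted in Section \ref{sec2}), so that all the bracket manipulations stay within the horizontal plane bundle.

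The main obstacle is bookkeeping the quaternionic sides and signs: whether $J\alpha = \zeta\cdot\alpha$ or $\alpha\cdot\overline{\zeta}$, and correspondingly whether the cross term pairs $\varphi(\alpha_i)$ with $J\alpha_j$ or with $-J\alpha_j$, so that the final formula comes out symmetric in $i\leftrightarrow j$ with the stated signs (note the right side of \eqref{Eq: df} is manifestly symmetric, since swapping $i,j$ just swaps the two cross terms). A clean way to organise this is to write $df(\alpha) = \overline{p}\cdot\big(\alpha + J\alpha - \varphi(\alpha)\big)$ up to radial terms — reverse-engineering from the target formula — and then verify this identity for $df$ directly by the differentiation above; once that single vector identity is confirmed, the inner product formula \eqref{Eq: df} is immediate from the isometry property of $\overline{p}\cdot(\ )$, expanding $\langle \alpha_i + J\alpha_i - \varphi(\alpha_i),\ \alpha_j + J\alpha_j - \varphi(\alpha_j)\rangle$ and using $\langle J\alpha_i,\alpha_j\rangle = -\langle\alpha_i,J\alpha_j\rangle$, $\langle J\alpha_i,J\alpha_j\rangle=\langle\alpha_i,\alpha_j\rangle$, $\langle J\alpha_i,\varphi(\alpha_j)\rangle = -\langle\alpha_i, J\varphi(\alpha_j)\rangle$, and the fact that $\langle J\alpha_i,J\alpha_j\rangle$ contributions cancel against the $\langle\alpha_i,\alpha_j\rangle$ doubling appropriately — I expect the $\langle J\alpha_i,\varphi(\alpha_j)\rangle$-type terms to be precisely the $-\langle\varphi(\alpha_j),J\alpha_i\rangle$ after using $\langle J u, v\rangle = -\langle u, Jv\rangle$ once more.
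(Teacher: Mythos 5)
Your overall strategy is exactly the paper's: differentiate $f(p)=\overline{p}\cdot\zeta(p)$, use that multiplication by a unit quaternion is an isometry (Lemma \ref{Eq:Lemmaidentitiesquaternion}(a)), and identify the cross terms with $J$ through quaternionic multiplication. Two details in your write-up, however, are wrong as stated and would derail the computation if followed literally. First, the organising identity you reverse-engineered, $df(\alpha)=\overline{p}\cdot\bigl(\alpha+J\alpha-\varphi(\alpha)\bigr)$, has a spurious $\alpha$: expanding it gives an extra $\langle\alpha_i,\alpha_j\rangle$ together with the terms $-\langle\alpha_i,\varphi(\alpha_j)\rangle-\langle\varphi(\alpha_i),\alpha_j\rangle$, and these do not cancel against anything (the hoped-for cancellation of ``the $\langle J\alpha_i,J\alpha_j\rangle$ doubling'' does not occur). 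The correct identity, which your differentiation does produce, is
\begin{equation*}
df(\alpha)=\overline{p}\cdot\bigl(J\alpha-\varphi(\alpha)\bigr),\qquad \alpha\in\mathcal{H},
\end{equation*}
with no radial correction at all: since $\alpha\perp\zeta$, the ambient derivative satisfies $D_{\alpha}\zeta=\nabla_{\alpha}\zeta=-\varphi(\alpha)$, so $df(\alpha)=\overline{\alpha}\cdot\zeta-\overline{p}\cdot\varphi(\alpha)$, and the first term equals $\overline{p}\cdot J\alpha$ by the point below. From this identity, \eqref{Eq: df} follows immediately since $\langle J\alpha_i,J\alpha_j\rangle=\langle\alpha_i,\alpha_j\rangle$.

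Second, the realisation of $J$ by quaternions is not left multiplication by $\zeta$ (for $\alpha\in T_p\S^3$ the product $\zeta\cdot\alpha$ is not even tangent to $\S^3$ at $p$ in general); the correct statement is the three-fold ``sandwich'' identity $\alpha\cdot\overline{p}\cdot\zeta=-J\alpha$ for $\alpha\in\mathcal{H}$, equivalently $\overline{\alpha}\cdot\zeta=\overline{p}\cdot J\alpha$ after substituting $\overline{\alpha}=-\overline{p}\cdot\alpha\cdot\overline{p}$. Establishing this is not a matter of sign conventions alone: one must check that $\alpha_2\cdot\overline{p}\cdot\alpha_3$ is orthogonal to $\alpha_2$, $\alpha_3$ and $p$, hence equals $\pm\alpha_1$, and then fix the sign by an orientation choice --- this is precisely the content of the multiplication table \eqref{Eq: product p alpha I} in the paper, and it is the one genuinely non-mechanical step of the proof. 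Once you replace your guessed formula by the correct one and supply this orthogonality argument, your proof coincides with the paper's.
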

\begin{proof}
Observe at first that any tangent vector $\alpha \in T_{p}\S^{3}$ satisfies the equations
\begin{equation}\label{Eq: bar X}
\overline{\alpha}=-\overline{p}\cdot \alpha \cdot \overline{p}\quad\text{and}\quad \alpha=-p\cdot \overline{\alpha} \cdot p.
\end{equation}
Indeed! Consider a curve $\sigma:(-\varepsilon, \varepsilon) \to \S^{3}$ such that $\sigma(0)=p$ and
$\sigma'(0)=\alpha$. Differentiating the expression $\sigma \cdot \overline{\sigma}=1$, and estimating
at $t=0$, we get the first identity of \eqref{Eq: bar X}. The second follows immediately from the first.
Differentiating the second equation of (\ref{Eq: relationfe1}) with respect to $\alpha_i$, $i,j\in\{2,3\}$, we get
\begin{equation}\label{Eq: differential f}
df(\alpha_i)=\overline{\alpha_i}\cdot \zeta+\overline{p}\cdot D_{\alpha_i}\zeta=
\overline{\alpha_i}\cdot \zeta+\overline{p}\cdot \nabla_{\alpha_i}\zeta
=\overline{\alpha_i}\cdot \zeta-\overline{p}\cdot \varphi(\alpha_i),
\end{equation}
where $D$ is the standard connection of $\mathbb{H}=\R^4$. Using  Lemma \ref{Eq:Lemmaidentitiesquaternion} and (\ref{Eq: bar X}), we obtain
\begin{eqnarray}\label{Eq: product df}
\langle df(\alpha_i), df(\alpha_j)\rangle&=&\langle \overline{\alpha_i}\cdot \zeta-\overline{p}\cdot \varphi(\alpha_i), \overline{\alpha_j}\cdot \zeta-\overline{p}\cdot \varphi(\alpha_j)\rangle\\
&=&\langle \alpha_i, \alpha_j \rangle+\langle\varphi(\alpha_i), \varphi(\alpha_j)\rangle
+\langle \alpha_i\cdot \overline{p}\cdot \zeta, \varphi(\alpha_j)\rangle
+\langle \alpha_j\cdot \overline{p}\cdot \zeta, \varphi(\alpha_i) \rangle.\nonumber
\end{eqnarray}
For any $p\in\S^3$ the vectors $\{\alpha_{1}=\zeta, \alpha_{2}, \alpha_{3},p\}$ forms a basis of $\mathbb{H}$. We decompose the vector $\alpha_{2}\cdot\overline{p}\cdot\alpha_{3}$ in terms of $\alpha_{1}, \alpha_{2}, \alpha_{3}$ and $p$. We easily observe that
$$\langle \alpha_{2}\cdot\overline{p}\cdot\alpha_{3}, \alpha_{2}\rangle=\langle\alpha_{2}\cdot\overline{p}\cdot\alpha_{3}, \alpha_{3}\rangle=0.$$
Furthermore,
\begin{equation}
\langle \alpha_{2}\cdot\overline{p}\cdot\alpha_{3}, p\rangle=\langle \alpha_{2}\cdot\overline{p}\cdot\alpha_{3}\cdot\overline{p}, 1\rangle=-\langle\alpha_{2}\cdot\overline{\alpha_{3}}, 1\rangle=0,
\end{equation}
since $\langle \alpha_{2}, \alpha_{3}\rangle=0={\rm Re}(\alpha_{2}\cdot\overline{\alpha_{3}})$. Hence, $\alpha_{2}\cdot\overline{p}\cdot\alpha_{3}=\pm \alpha_{1}$. Choosing positive orientation, we get
\begin{equation}\label{Eq: product p alpha I}
\begin{array}{lll}
\alpha_{1}\cdot\overline{p}\cdot\alpha_{2}=-\alpha_{2}\cdot\overline{p}\cdot\alpha_{1}=\alpha_{3},\\ \alpha_{3}\cdot\overline{p}\cdot\alpha_{1}=-\alpha_{1}\cdot\overline{p}\cdot\alpha_{3}=\alpha_{2},\\
\alpha_{2}\cdot\overline{p}\cdot\alpha_{3}=-\alpha_{3}\cdot\overline{p}\cdot\alpha_{2}=\alpha_{1}.
\end{array}
\end{equation}
Using (\ref{Eq: bar X}), we obtain
\begin{equation}\label{Eq: product p alpha II}
\begin{array}{ll}
\alpha_{1}\cdot\overline{p}\cdot \alpha_{1}=-\alpha_{1}\cdot\overline{\alpha_{1}}\cdot p=-p, & \alpha_{2}\cdot\overline{p}\cdot \alpha_{2}=\alpha_{3}\cdot\overline{p}\cdot \alpha_{3}=-p.
\end{array}
\end{equation}
From (\ref{Eq: product df}) and the equations (\ref{Eq: product p alpha I}), we easily get (\ref{Eq: df}). This completes
the proof.
\end{proof}

\begin{definition}
Let $F:(M,g_M,\nabla^M)\to(N,g_N,\nabla^N)$ be a smooth map between Riemannian manifolds and $\nabla^F$ the induced by $F$
connection of the pullback bundle $F^*TN$. The tensor $B_F$ given by
$$
B_F(X,Y)=\nabla^F_{Y}dF(X)-dF(\nabla^M_XY), \quad X,Y\in \mathfrak{X}(M),
$$
is called the Hessian of $F$ and its trace
$$
\tau_F={\trace}_{g_M}B_F
$$
is called the tension field of $F$. The map $F$ is called harmonic if $\tau_F=0$.
\end{definition}

\begin{proposition}\label{lapetensf}
Let $\zeta$ be a unit vector field with totally geodesic integral curves defined in an open
saturated  neighbourhood
$V$ of $\S^3$ and $f:V\subset\S^3\to\S^2$ the corresponding quotient map given in {\rm \ref{Eq: relationfe1}}.  Then the Hessian of $\zeta$, at a point $p\in\S^3$, satisfies the formula
\begin{equation}\label{Eq: Laplaciane1}
\nabla^2_{\alpha_i, \alpha_j}{\zeta}=\nabla_{\alpha_i}\nabla_{\alpha_j}\zeta-\nabla_{\nabla_{\alpha_i}\alpha_j}\zeta=p \cdot B_{f}(\alpha_i, \alpha_j)-\langle\nabla_{\alpha_i}\zeta,\nabla_{\alpha_j}\zeta\rangle \zeta,
\end{equation}
where $i,j\in\{2,3\}$, $\{\alpha_2, \alpha_3\}$ is a local orthonormal frame of $\mathcal{H}$, and $B_f$ the Hessian of the map $f:(\S^3,g_{\S^3})\to(\S^2,g_{\S^2})$. In particular,
$$
\Delta \zeta+|\nabla \zeta|^2\zeta=p\cdot \tau_{f},
$$
where here $\Delta$ stands for the rough Laplacian operator and $\tau_{f}$ for the tension field of $f$.
\end{proposition}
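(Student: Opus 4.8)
The plan is to pass to the flat ambient space $\mathbb{H}=\R^{4}$, where $\zeta=p\cdot f$ by \eqref{Eq: relationfe1}, and to compare the intrinsic Hessians $\nabla^{2}\zeta$ and $B_{f}$ with their ``flat'' analogues, computed with the euclidean connection $D$ of $\mathbb{H}$ on the target and the Levi--Civita connection of $\S^{3}$ on the domain; I denote these flat Hessians by $\operatorname{Hess}$. Regard $\S^{3}\subset\mathbb{H}$ with outward unit normal the position vector $p$, and $\S^{2}=\S^{3}\cap\Im\mathbb{H}$, whose normal space in $\mathbb{H}$ at a point $q$ is $\operatorname{span}\{q,1\}$, with the component of its second fundamental form along $1$ vanishing because $\Im\mathbb{H}$ is a linear subspace. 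The Gauss formulas then yield, for $\alpha_{i},\alpha_{j}\in\mathcal{H}$,
\[\nabla^{2}_{\alpha_{i},\alpha_{j}}\zeta=\bigl(\operatorname{Hess}\zeta(\alpha_{i},\alpha_{j})\bigr)^{\top_{\S^{3}}}\qquad\text{and}\qquad\operatorname{Hess}f(\alpha_{i},\alpha_{j})=B_{f}(\alpha_{i},\alpha_{j})-\langle df(\alpha_{i}),df(\alpha_{j})\rangle\,f,\]
the first identity because $\langle\alpha_{j},\zeta\rangle=0$, the second because the relevant second fundamental form of $\S^{2}\subset\mathbb{H}$ is $(v,w)\mapsto-\langle v,w\rangle q$.

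Next I would differentiate $\zeta=p\cdot f$ twice. The flat Hessian obeys the Leibniz rule $\operatorname{Hess}(u\cdot v)(X,Y)=\operatorname{Hess}(u)(X,Y)\cdot v+du(X)\cdot dv(Y)+du(Y)\cdot dv(X)+u\cdot\operatorname{Hess}(v)(X,Y)$ for $\mathbb{H}$--valued maps, and $\operatorname{Hess}p(\alpha_{i},\alpha_{j})=-\langle\alpha_{i},\alpha_{j}\rangle p$ is just the second fundamental form of $\S^{3}$; hence
\[\operatorname{Hess}\zeta(\alpha_{i},\alpha_{j})=-\langle\alpha_{i},\alpha_{j}\rangle\,p\cdot f+\alpha_{i}\cdot df(\alpha_{j})+\alpha_{j}\cdot df(\alpha_{i})+p\cdot\operatorname{Hess}f(\alpha_{i},\alpha_{j}).\]
Into this I substitute $df(\alpha_{k})=\overline{\alpha_{k}}\cdot\zeta-\overline{p}\cdot\varphi(\alpha_{k})$ from \eqref{Eq: differential f}. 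Using Lemma \ref{Eq:Lemmaidentitiesquaternion} (in particular $\overline{x\cdot y}=\overline{y}\cdot\overline{x}$) together with \eqref{Eq: bar X}, and evaluating the products $\alpha_{j}\cdot\overline{p}\cdot\varphi(\alpha_{i})$ (where $\varphi(\alpha_{i})\in\mathcal{H}=\operatorname{span}\{\alpha_{2},\alpha_{3}\}$) via \eqref{Eq: product p alpha I} and \eqref{Eq: product p alpha II}, one checks that $\alpha_{i}\cdot df(\alpha_{j})+\alpha_{j}\cdot df(\alpha_{i})$ has only a $\zeta$--component and a $p$--component, whereas $p\cdot\operatorname{Hess}f(\alpha_{i},\alpha_{j})$ is a multiple of $\zeta$ plus $p\cdot B_{f}(\alpha_{i},\alpha_{j})$. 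Now $(\,\cdot\,)^{\top_{\S^{3}}}$ kills all the $p$--components; the vector $p\cdot B_{f}(\alpha_{i},\alpha_{j})$ is already tangent to $\S^{3}$ because $\langle p\cdot w,p\rangle=\Re(w)=0$ for $w\in\Im\mathbb{H}$; and, after inserting \eqref{Eq: df} and $\alpha_{3}=J\alpha_{2}$, the surviving coefficient of $\zeta$ collapses to exactly $-\langle\nabla_{\alpha_{i}}\zeta,\nabla_{\alpha_{j}}\zeta\rangle$. This is \eqref{Eq: Laplaciane1}.

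For the statement about the rough Laplacian, I trace \eqref{Eq: Laplaciane1} over an orthonormal frame $\{\alpha_{2},\alpha_{3}\}$ of $\mathcal{H}$ and add the $\zeta$--direction. Since the integral curves of $\zeta$ are geodesics, $\nabla_{\zeta}\zeta=0$, so $\nabla^{2}_{\zeta,\zeta}\zeta=0$ and $|\nabla\zeta|^{2}=\sum_{i=2,3}|\nabla_{\alpha_{i}}\zeta|^{2}$; and since $\zeta$ spans $\ker df$ we have $df(\zeta)=0$, whence $B_{f}(\zeta,\zeta)=\nabla^{f}_{\zeta}df(\zeta)-df(\nabla_{\zeta}\zeta)=0$ and therefore $\tau_{f}=\sum_{i=2,3}B_{f}(\alpha_{i},\alpha_{i})$. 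Multiplying \eqref{Eq: Laplaciane1} on the left by $p$ and summing then yields $\Delta\zeta+|\nabla\zeta|^{2}\zeta=p\cdot\tau_{f}$.

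I expect the one genuinely delicate point to be the quaternionic bookkeeping in the second paragraph: tracking how each mixed product decomposes into its $p$--part (discarded), its $\zeta$--part and its $\mathcal{H}$--part (which turns out to vanish), and then verifying, once \eqref{Eq: df} is substituted for $\langle df(\alpha_{i}),df(\alpha_{j})\rangle$, that all the $\zeta$--contributions cancel down precisely to $-\langle\nabla_{\alpha_{i}}\zeta,\nabla_{\alpha_{j}}\zeta\rangle$ with the correct sign.
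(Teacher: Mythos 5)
Your argument is correct and is essentially the paper's own proof: the paper likewise differentiates $\zeta=p\cdot f$ twice in $\mathbb{H}$, absorbs the normal corrections of $\S^3\subset\mathbb{H}$ and $\S^2\subset\Im\mathbb{H}$, and uses \eqref{Eq: differential f}, \eqref{Eq: product p alpha I}--\eqref{Eq: product p alpha II} and \eqref{Eq: df} to reduce the $\zeta$-coefficient to $-\langle\nabla_{\alpha_i}\zeta,\nabla_{\alpha_j}\zeta\rangle$; your packaging via a Leibniz rule for the flat Hessian followed by a single tangential projection is only an organizational variant of the paper's term-by-term bookkeeping in \eqref{Eq: differentiation1}--\eqref{Eq: differentiation5}. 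The trace step, including $\nabla^2_{\zeta,\zeta}\zeta=0$ and $B_f(\zeta,\zeta)=0$, also matches.
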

\begin{proof}
Consider a local orthonormal frame $\{\alpha_1=\zeta,\alpha_2,\alpha_3\}$
such that $\alpha_1\in\mathcal{V}$ and $\alpha_2,\alpha_3\in\mathcal{H}$. Differentiating the identity
$$\alpha_1(p)=p\cdot f(p)$$
with respect to $\alpha_{j}$, $j\in\{2,3\}$, we get
\begin{equation}\label{Eq: differentiation1}
D_{\alpha_j}\alpha_{1}=\alpha_j\cdot f+p\cdot df(\alpha_j).
\end{equation}
Differentiating (\ref{Eq: differentiation1}) with respect to $\alpha_i$, $i\in\{2,3\}$, we have
\begin{eqnarray}\label{Eq: differentiation2}
D_{\alpha_i}D_{\alpha_j}\alpha_{1}&=&\nabla_{\alpha_i}\alpha_j\cdot f-\langle \alpha_i, \alpha_j\rangle \alpha_{1}
-\langle df(\alpha_i), d f(\alpha_j)\rangle \alpha_{1}\nonumber\\
&&+\alpha_j\cdot d f(\alpha_i)+\alpha_i\cdot d f(\alpha_j)+p\cdot \nabla^{f}_{\alpha_i} d f(\alpha_j).
\end{eqnarray}
Using (\ref{Eq: differentiation1}) and (\ref{Eq: differentiation2}), we deduce
\begin{eqnarray}\label{Eq: differentiation3}
\nabla^{2}_{\alpha_i, \alpha_j}\alpha_{1}&=&p\cdot B_{f}(\alpha_i, \alpha_j)
-\big(\langle \alpha_i, \alpha_j\rangle+\langle d f(\alpha_i), d f(\alpha_j)\rangle\big)\alpha_{1}\nonumber\\
&&+\alpha_i\cdot df(\alpha_j)+\alpha_j\cdot df(\alpha_i)-\big(\langle \alpha_i, \varphi(\alpha_j)\rangle
+\langle \varphi(\alpha_i), \alpha_j\rangle\big)p.
\end{eqnarray}
From the formulas (\ref{Eq: product p alpha I})-(\ref{Eq: product p alpha II}), we get
\begin{eqnarray}\label{Eq: differentiation4}
\alpha_i\cdot\overline{p}\cdot \varphi(\alpha_j)&=&-\langle \varphi(\alpha_j), \alpha_i\rangle p+\langle \varphi(\alpha_j), J\alpha_i\rangle \alpha_{1}, \nonumber\\
\alpha_j\cdot\overline{p}\cdot \varphi(\alpha_i)&=&-\langle \varphi(\alpha_i), \alpha_j\rangle p+\langle \varphi(\alpha_i), J\alpha_j\rangle \alpha_{1}.
\end{eqnarray}
Combining Lemma \ref{Eq:Lemmaidentitiesquaternion}, (\ref{Eq: bar X}), (\ref{Eq: differential f}) and (\ref{Eq: differentiation4}), we obtain
\begin{eqnarray}\label{Eq: differentiation5} 
\alpha_i\cdot df(\alpha_j)+\alpha_j\cdot df(\alpha_i)&=&-\big(\alpha_i\cdot \overline{p} \cdot \alpha_j \cdot \overline{p} +\alpha_j\cdot \overline{p} \cdot \alpha_i \cdot \overline{p}\big)\alpha_{1}\nonumber\\
&&-\alpha_i\cdot\overline{p}\cdot \varphi(\alpha_j)-\alpha_j\cdot\overline{p}\cdot \varphi(\alpha_i)\nonumber\\
&=&2\langle \alpha_i, \alpha_j \rangle \alpha_{1}+\big(\langle \varphi(\alpha_j), \alpha_i \rangle
+\langle \varphi(\alpha_i), \alpha_j\rangle\big)p\nonumber\\
&&-\big(\langle \varphi(\alpha_j), J\alpha_i\rangle+\langle \varphi(\alpha_i), J\alpha_j\rangle \big)\alpha_{1}.
\end{eqnarray}
Substituting (\ref{Eq: differentiation5}) in (\ref{Eq: differentiation3}) and using (\ref{Eq: df}), we obtain
the desired formula (\ref{Eq: Laplaciane1}). This completes the proof of the proposition.
\end{proof}

As a direct consequence of Proposition \ref{lapetensf} we derive our first main result of the paper.

\begin{thmb}
Let $\zeta$ be a unit vector field with totally geodesic integral curves defined in an open
saturated  neighbourhood
$V$ of $\S^3$ and $f:V\subset\S^3\to\S^2$ the corresponding quotient map. Then $f$ is a harmonic map if and
only if $\zeta:V\to U\S^3$ is a harmonic unit vector field. 
\end{thmb}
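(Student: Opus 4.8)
The plan is to read off the equivalence directly from Proposition~\ref{lapetensf} combined with the Euler--Lagrange equation for harmonic unit vector fields recalled earlier. Recall that, by definition, $\zeta$ is a harmonic unit vector field precisely when it is a critical point of the energy functional among nearby unit vector fields, and that part~(d) of the Proposition on critical points of the energy and volume functionals characterises this by the equation
$$
\Delta\zeta+|\nabla\zeta|^2\zeta=0\qquad\text{on }V,
$$
where $\Delta$ is the rough Laplacian of $g_{\S^3}$. On the other hand, Proposition~\ref{lapetensf} furnishes, for the quotient map $f:(V,g_{\S^3})\to(\S^2,g_{\S^2})$ associated to $\zeta$ by \eqref{Eq: relationfe1}, the pointwise identity
$$
\Delta\zeta+|\nabla\zeta|^2\zeta=p\cdot\tau_f
$$
at every $p\in V$, the product being quaternionic multiplication in $\mathbb{H}\simeq\R^4$ and $\tau_f$ the tension field of $f$.

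The only thing left to observe is that, for each $p\in\S^3$, left multiplication $L_p\colon\mathbb{H}\to\mathbb{H}$, $v\mapsto p\cdot v$, is a linear isomorphism: its inverse is $L_{\overline p}$ because $\overline p\cdot p=|p|^2=1$, and by Lemma~\ref{Eq:Lemmaidentitiesquaternion}(a) it is in fact a linear isometry. Hence $p\cdot\tau_f(p)=0$ if and only if $\tau_f(p)=0$. Chaining the displayed facts, I obtain
$$
\zeta\text{ harmonic}\iff\Delta\zeta+|\nabla\zeta|^2\zeta\equiv0\iff p\cdot\tau_f\equiv0\text{ on }V\iff\tau_f\equiv0\text{ on }V\iff f\text{ harmonic},
$$
which is exactly Theorem~B.

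I do not anticipate any real obstacle at this point: all of the analytic and algebraic content has been absorbed into Proposition~\ref{lapetensf}, which is where the work resides. That proof, in turn, leans on the quaternionic identities of Lemma~\ref{Eq:Lemmaidentitiesquaternion}, the multiplication relations \eqref{Eq: product p alpha I}--\eqref{Eq: product p alpha II} for the adapted frame $\{\zeta,\alpha_2,\alpha_3,p\}$, and the normal form $\zeta(p)=p\cdot f(p)$, $f(p)=\overline p\cdot\zeta(p)$ of \eqref{Eq: relationfe1} coming from the Gluck--Warner description of great circle fibrations; once these are in place, Theorem~B is simply the coordinate-free restatement, modulo the trivial invertibility of $L_p$.
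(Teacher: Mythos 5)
Your argument is exactly the paper's: Theorem~B is stated there as a direct consequence of Proposition~\ref{lapetensf} together with the Euler--Lagrange characterisation $\Delta\zeta+|\nabla\zeta|^2\zeta=0$ of harmonic unit vector fields, the only remaining observation being the invertibility of left quaternionic multiplication by the unit quaternion $p$, which you supply explicitly. The proposal is correct and matches the paper's route.
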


Now let us state and prove the our next main theorem.
\begin{thmc}
A harmonic unit vector field $\zeta$ on $\S^3$, whose integral curves are great circles, is a Hopf vector field and the corresponding quotient map $f:\S^3\to\S^2$ is a Hopf fibration.
\end{thmc}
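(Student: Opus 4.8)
The plan is as follows. By hypothesis $\zeta$ is a harmonic unit vector field, hence $\Delta\zeta=-|\nabla\zeta|^{2}\zeta$ with $\Delta$ the rough Laplacian (equivalently, by Proposition~\ref{lapetensf}, the local quotient map is harmonic). With a local orthonormal frame $\{\alpha_{1}=\zeta,\alpha_{2},\alpha_{3}=J\alpha_{2}\}$ as in Section~\ref{sec2} we have $\nabla_{\zeta}\zeta=0$ and $\nabla_{\alpha_{i}}\zeta=-\varphi(\alpha_{i})$ for $i=2,3$, so that $|\nabla\zeta|^{2}=|\varphi|^{2}$ and $\div\zeta=-\trace\varphi$. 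I would then show that $(\trace\varphi)^{2}$ is subharmonic on $\S^{3}$, deduce from the maximum principle that $\trace\varphi$ is constant, force it to vanish using the Ricatti equations of Lemma~\ref{ricattieqns}, and conclude with Corollary~\ref{gluckgu}.

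The central ingredient is a Bochner-type identity for $\trace\varphi$. Since $\S^{3}$ has Ricci curvature $\Ric=2\,g_{\S^{3}}$, the rough Laplacian obeys the commutation identity $\Delta(\div X)=\div(\Delta X)-2\,\div X$ for every vector field $X$. Applying this to $X=\zeta$, inserting the harmonic equation $\Delta\zeta=-|\varphi|^{2}\zeta$, and computing $\zeta(|\varphi|^{2})$ directly from the Ricatti system \eqref{ric1} (which gives $\zeta(|\varphi|^{2})=2(\trace\varphi)(1+|\varphi|^{2}-\det\varphi)$), I expect to arrive at
\begin{equation*}
\Delta(\trace\varphi)=(\trace\varphi)\big(|\varphi|^{2}-2\det\varphi\big),
\end{equation*}
where crucially $|\varphi|^{2}-2\det\varphi=(\varphi_{22}-\varphi_{33})^{2}+(\varphi_{23}+\varphi_{32})^{2}\ge 0$; multiplying by $\trace\varphi$ then shows that $(\trace\varphi)^{2}$ is subharmonic. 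An alternative derivation of the same identity is to decompose $\Delta$ into the $\zeta$-direction, governed by \eqref{ric2}, and the two horizontal directions, governed by the Codazzi equations in \eqref{ricatti} together with the observation --- which follows from Proposition~\ref{lapetensf}, because $p\cdot B_{f}(v,w)=-(\nabla^{\mathcal{H}}_{v}\varphi)w$ --- that harmonicity of the quotient map is equivalent to $\varphi$ being divergence free along $\mathcal{H}$.

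Granting the identity, I would multiply it by $\trace\varphi$, integrate over the closed manifold $\S^{3}$, and integrate by parts: $\int_{\S^{3}}|\nabla\trace\varphi|^{2}=-\int_{\S^{3}}(\trace\varphi)^{2}(|\varphi|^{2}-2\det\varphi)\le 0$, so $\trace\varphi\equiv c$ for a constant $c$. The Ricatti equations \eqref{ric2} then force $c=0$: from $\zeta(\trace\varphi)=0$ we get $1+\det\varphi=(c^{2}+4)/2$, a constant, whence $0=\zeta(1+\det\varphi)=c\,(1+\det\varphi)=c(c^{2}+4)/2$, i.e. $c=0$. Therefore $\zeta$ is a divergence free unit vector field with totally geodesic integral curves, so by Corollary~\ref{gluckgu} it is a Hopf vector field and, by Theorem~\ref{localhopf}(c), the corresponding quotient map $f$ is a Hopf fibration.

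The one genuine obstacle should be establishing the Bochner-type identity: one has to organize the curvature term of $\S^{3}$ in the commutation formula (or, in the second approach, the interplay between the Ricatti ODE along the integral curves and the Codazzi system in the horizontal directions) so that the a priori sign-indefinite contributions recombine into the manifestly nonnegative quantity $|\varphi|^{2}-2\det\varphi$. Everything past that point is the maximum principle together with the already established Corollary~\ref{gluckgu} and Theorem~\ref{localhopf}.
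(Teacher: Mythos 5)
Your argument is correct, and it reaches the paper's own key identity $\Delta(\trace\varphi)=(\trace\varphi)\big(|\varphi|^{2}-2\det\varphi\big)$ by a genuinely different route. The paper derives this by writing out the horizontal components $\langle\Delta\zeta,\alpha_2\rangle=\langle\Delta\zeta,\alpha_3\rangle=0$ of the harmonic equation, combining them with the Codazzi part of \eqref{ricatti} to obtain a Cauchy--Riemann type first-order system for $v=\trace\varphi$ and $u=\trace(\varphi\circ J)$ (namely $\alpha_2(u)=\alpha_3(v)$, $\alpha_3(u)=-\alpha_2(v)$ together with the fiberwise ODEs), and then differentiating that system once more. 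You instead use only the $\zeta$-direction information from Lemma \ref{ricattieqns} --- the verified identity $\zeta(|\varphi|^{2})=2(\trace\varphi)(1+|\varphi|^{2}-\det\varphi)$ --- together with the global commutation formula $\Delta(\div X)=\div(\Delta X)-2\div X$ on $\S^{3}$, which is exactly the statement that $d^{*}$ commutes with the Hodge Laplacian combined with the Weitzenb\"ock formula on an Einstein manifold with $\Ric=2g$; I checked that this reproduces the same identity, and it does so without ever introducing $u$ or the horizontal PDEs. This is arguably cleaner, at the cost of invoking the Weitzenb\"ock machinery rather than elementary frame computations. The remaining differences are cosmetic: you obtain $\trace\varphi\equiv 0$ from the constancy via the Ricatti ODEs ($0=c(c^{2}+4)/2$) where the paper uses Stokes' theorem on $\div\zeta$, and you finish by citing Corollary \ref{gluckgu} where the paper re-runs that corollary's argument ($\det\varphi=1$, conformality of $g$ via Lemma \ref{gw}(c), and the Bernstein theorem of \cite{savas1}). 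Both endgames are equivalent.
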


\begin{proof} Consider a local orthonormal frame $\{\alpha_1=\zeta,\alpha_2,\alpha_3\}$
such that $\alpha_2,\alpha_3\in\mathcal{H}$.
Let us introduce the functions
$$v=\trace\varphi=\varphi_{22}+\varphi_{33}\quad\text{and} 
\quad
u=\trace(\varphi\circ J)=\varphi_{32}-\varphi_{23}.$$
The functions $v$ and $u$ are smooth and globally defined on $\S^3$.
Denote by $\pi:\S^3\to\S^2$ the associated quotient map to $\alpha_1$.
We have that
$$
\Delta \alpha_1+|\nabla \alpha_1|^2\alpha_1=0.
$$
From the equations
$$\langle\Delta\alpha_1,\alpha_2\rangle=0=\langle\Delta\alpha_1,\alpha_3\rangle$$
we obtain
\begin{equation}\label{Eq: condition 1 harmonicity}
\alpha_{2}(\varphi_{22})+\alpha_{3}(\varphi_{32})=(\varphi_{22}-\varphi_{33})\langle \nabla_{\alpha_{3}}\alpha_{3}, \alpha_{2}\rangle+(\varphi_{23}+
\varphi_{32})\langle \nabla_{\alpha_{2}}\alpha_{2}, \alpha_{3}\rangle
\end{equation}
and
\begin{equation}\label{Eq: condition 2 harmonicity}
\,\alpha_{2}(\varphi_{23})+\alpha_{3}(\varphi_{33})=(\varphi_{33}-\varphi_{22})\langle \nabla_{\alpha_{2}}\alpha_{2}, \alpha_{3}\rangle+(\varphi_{23}+
\varphi_{32})\langle \nabla_{\alpha_{3}}\alpha_{3}, \alpha_{2}\rangle.
\end{equation}
Since
$$(\nabla^{\mathcal{H}}_{\alpha_{2}}\varphi)\alpha_{3}=(\nabla^{\mathcal{H}}_{\alpha_{3}}\varphi)\alpha_{2},$$
we obtain the following system of PDEs
\begin{equation}\label{Eq: condition 1 Codazzi}
\alpha_{3}(\varphi_{22})-\alpha_{2}(\varphi_{32})=(\varphi_{22}-\varphi_{33})\langle \nabla_{\alpha_{2}}\alpha_{2}, \alpha_{3}\rangle-(\varphi_{23}+
\varphi_{32})\langle \nabla_{\alpha_{3}}\alpha_{3}, \alpha_{2}\rangle
\end{equation}
and
\begin{equation}\label{Eq: condition 2 Codazzi}
\,\alpha_{3}(\varphi_{23})-\alpha_{2}(\varphi_{33})=(\varphi_{23}+\varphi_{32})\langle \nabla_{\alpha_{2}}\alpha_{2}, \alpha_{3}\rangle+(\varphi_{22}-\varphi_{33})\langle \nabla_{\alpha_{3}}\alpha_{3}, \alpha_{2}\rangle.
\end{equation}
Subtracting (\ref{Eq: condition 2 Codazzi}) from (\ref{Eq: condition 1 harmonicity}), adding
(\ref{Eq: condition 2 harmonicity}) and (\ref{Eq: condition 1 Codazzi}) and keeping in mind \eqref{ric2},
we deduce that
the functions $u$ and $v$ satisfy the system of differential equations
\begin{equation}\label{cauchyriemann}
\alpha_1(v)=v^2-2(1+\det\varphi)+4,\,\, \alpha_1(u)=uv,\,\,\alpha_2(u)=\alpha_3(v)\,\,\text{and}\,\,\alpha_3(u)=-\alpha_2(v).
\end{equation}
From \eqref{cauchyriemann} and (\ref{ric2}), we deduce that
\begin{eqnarray}
\Delta v&=&\alpha_{1}\alpha_{1}(v)+\alpha_{2}\alpha_{2}(v)+\alpha_{3}\alpha_{3}(v)-\nabla_{\alpha_{2}}\alpha_{2}(v)-\nabla_{\alpha_{3}}\alpha_{3}(v)\nonumber\\
&=&2v \alpha_{1}(v)-2v(1+\det\varphi)-[\alpha_{2}, \alpha_{3}](u)-v\alpha_{1}(v)\nonumber\\
&&+\langle \alpha_{2}, \nabla_{\alpha_{2}}\alpha_{3} \rangle \alpha_{2}(u)-\langle \alpha_{3}, \nabla_{\alpha_{3}}\alpha_{2}\rangle\alpha_{3}(u)\nonumber\\
&=&u^2 v+v \alpha_{1}(v)-2v(1+\det\varphi)=v(u^2+v^2-4\det\varphi)\nonumber\\
&=&v\big(|\varphi|^2-2\det\varphi\big).\nonumber
\end{eqnarray}
Since
$|\varphi|^2-2\det\varphi\ge 0,$
we obtain that
$$
\Delta v^2=2v\Delta v+2|\nabla v|^2=2v^2(|\varphi|^2-2\det\varphi)+2|\nabla v|^2\ge 0.
$$
From the maximum principle it follows that the function $v$ is constant. On the other hand
$$
v=\varphi_{22}+\varphi_{33}=-\operatorname{div}(\alpha_1)
$$
and by the Stokes' Theorem we conclude that $v=\trace\varphi=0$. Moreover, from
the first equation of \eqref{cauchyriemann} we deduce that $\det\varphi=1$. From Lemma
\ref{gw}(c) it follows now that the associated graph in $\S^2_-\times\S^2_+$ is generated
by a conformal and
strictly length decreasing map $g$. Note that conformality implies minimality; see \cite{eells}.
Then, from \cite[Theorem A]{savas1},
it follows that $g$ must be constant. From Theorem \ref{localhopf}(c) we
deduce that $\alpha_1$ is a Hopf vector field. 
\end{proof}

\subsection{Minimal unit vector fields}\label{sec3}
The unit tangent sphere bundle of $\S^3$ can be identified
with the {\em Stiefel manifold} $V_{2}(\R^4)$, i.e. the set of orthonormal two-frames in $\R^4$.
The Stiefel manifold $V_{2}(\R^{4})$ sits as a circle bundle over the Grassmann manifold
$\mathbb{G}_{2}(\R^{4})$ of oriented two-planes in $\R^4$. 
On the other hand, using the quaternionic structure of $\R^4$, we can identify $U\S^3$ with the product of unit spheres $\S^3 \times \S^2$. Indeed, one can easily check that the map
$\varPhi: U\S^3\to \S^3 \times \S^2$ given by
\begin{eqnarray*}
\varPhi(x, v)=(x, x\cdot v)
\end{eqnarray*}
is a diffeomorphism. The map $T:V_{2}(\R^{4})\cong U\S^3\to \mathbb{G}_{2}(\R^{4})$
given by
\begin{eqnarray*}
T(x, u)=x\wedge u.
\end{eqnarray*}
is called the {\em Stiefel bundle map}. It turns out that
$T:(U\S^3, g_{S}) \to (\mathbb{G}_{2}(\R^{4}), (\cdot\,,\cdot))$ is a Riemannian submersion; see for
example \cite[pages 121-123 and Remark 7.4]{gluck3}. For reader's convenience we include a short proof of this fact.

\begin{proposition} The Stiefel bundle map $T$ is a Riemannian submersion whose fibers are generated
by the geodesic flow vector field $\xi$.
\end{proposition}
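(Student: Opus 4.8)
The plan is to analyse $T$ fibrewise and then compute its differential on the orthogonal complement of the geodesic flow vector field $\xi$ by hand. First I would identify the fibres of $T$. A unit tangent vector $v$ at $x\in\S^3$ is exactly an orthonormal pair $(x,v)$ in $\R^4$, so $T(x,v)=x\wedge v$ represents the oriented $2$-plane $\span\{x,v\}$, and two such frames represent the same oriented plane precisely when one is obtained from the other by a rotation. Hence the fibre of $T$ through $(x,v)$ is the circle $t\mapsto(\cos t\,x+\sin t\,v,\ \cos t\,v-\sin t\,x)$. Writing $\sigma(t)=\cos t\,x+\sin t\,v$ for the great circle of $\S^3$ with $\sigma(0)=x$ and $\sigma'(0)=v$, this fibre is the curve $t\mapsto(\sigma(t),\sigma'(t))$; since $\sigma$ is a geodesic, $\nabla_{\sigma'}\sigma'=0$, so the velocity of this curve at $t=0$ projects under $d\pi$ to $v$ and lies in the kernel of the connection map $K$, and therefore equals $v^{\h}_{(x,v)}=\xi_{(x,v)}$. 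Thus the fibres of $T$ are exactly the integral curves of $\xi$; in particular $\xi\in\ker dT$ (consistently, $\sigma(t)\wedge\sigma'(t)=x\wedge v$ for all $t$).

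Next I would show that $dT$ is a linear isometry on $\xi^{\perp}$. Fix $(x,v)$ and pick an orthonormal basis $\{e_2,e_3\}$ of $\{x,v\}^{\perp}\subset\R^4$. Using \eqref{Eq: Sasaki metric} together with $e_i^{tan}=e_i^{ver}$ (valid because $e_i\perp v$), the four vectors $\{e_2^{\h},e_3^{\h},e_2^{tan},e_3^{tan}\}$ form a $g_{S}$-orthonormal basis of the orthogonal complement of $\xi$ in $T_{(x,v)}U\S^3$. To evaluate $dT$ on them I differentiate $T$ along explicit curves: along a curve $(\alpha(s),v(s))$ representing $e_i^{\h}$ the field $v(s)$ is parallel along $\alpha$, so in $\R^4$ one has $v'(0)=-\langle\alpha'(0),v(0)\rangle\,x=0$ because $e_i\perp v$, whence $dT(e_i^{\h})=e_i\wedge v$; along a curve $(x,v(s))$ representing $e_i^{tan}=e_i^{ver}$ one finds $dT(e_i^{tan})=x\wedge e_i$.

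It then remains to check that $\{e_2\wedge v,\ e_3\wedge v,\ x\wedge e_2,\ x\wedge e_3\}$ is a $(\cdot,\cdot)$-orthonormal basis of $T_{x\wedge v}\mathbb{G}_2(\R^4)$. From the inner product $(v_1\wedge v_2,w_1\wedge w_2)=\langle v_1,w_1\rangle\langle v_2,w_2\rangle-\langle v_1,w_2\rangle\langle v_2,w_1\rangle$ and the orthonormality of $\{x,v,e_2,e_3\}$, each of the four $2$-vectors has unit norm and the six mixed products vanish. Each of them is moreover $(\cdot,\cdot)$-orthogonal to $x\wedge v$ and has vanishing wedge product with $x\wedge v$, so it lies in the tangent space of $\mathbb{G}_2(\R^4)=\{\omega:\|\omega\|=1,\ \omega\wedge\omega=0\}$ at $x\wedge v$; as $\dim\mathbb{G}_2(\R^4)=4$, these four vectors span it. Hence $dT$ carries a $g_{S}$-orthonormal basis of $\xi^{\perp}$ to a $(\cdot,\cdot)$-orthonormal basis of $T_{x\wedge v}\mathbb{G}_2(\R^4)$, which at once gives that $dT$ is surjective (so $T$ is a submersion), that $\ker dT=\mathbb{R}\xi$ (so the fibres of $T$ are precisely the integral curves of $\xi$), and that $dT$ restricts to an isometry on the horizontal distribution $\xi^{\perp}$ (so $T$ is a Riemannian submersion).

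No step is genuinely hard, but two points deserve care: recognising the fibre circle as a geodesic-flow orbit relies on the connection-map description of horizontal vectors used to build the Sasaki metric, and the identification $\mathbb{G}_2(\R^4)\simeq\S^2_-\times\S^2_+$ must be read as an isometry for the metrics induced from $(\cdot,\cdot)$, which in turn follows from $\Lambda^2(\R^4)=E_-\oplus E_+$ being an orthogonal splitting and $\ast$ being an isometry. Once these are pinned down, everything else is a routine computation with wedge products.
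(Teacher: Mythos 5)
Your proof is correct and follows essentially the same route as the paper: both rest on the formula $dT(w^{hor}_1+w^{tan}_2)=w_1\wedge v+x\wedge w_2$ obtained by differentiating $T$ along curves, the identification of the kernel with $\span\{\xi\}$, and a direct check that the induced inner products agree on the orthogonal complement of the kernel. The only (harmless) differences are cosmetic: you evaluate $dT$ on an explicit adapted orthonormal basis $\{e_2^{hor},e_3^{hor},e_2^{tan},e_3^{tan}\}$ rather than on general vectors, and you identify the whole fibre globally as the geodesic-flow orbit $t\mapsto(\sigma(t),\sigma'(t))$ instead of only computing $\ker dT$ infinitesimally.
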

\begin{proof}
Let $(x, v)\in U\S^3$ and $Z \in T_{(x, v)}U\S^3$ and a smooth curve $\gamma: (-\varepsilon, \varepsilon) \to U\S^3$ such that $\gamma(0)=(x, v)$ and $\gamma'(0)=Z$. Then, $\gamma$ can be written in the form
$\gamma=(\alpha, V),$
where $\alpha=\pi \circ \gamma$  is  the projection of the curve $\gamma$ to $\S^3$ and $V$ is a vector field along $\alpha$. Note that $\alpha(0)=x$ and $V(0) =v$. Furthermore, we have that
$$Z=w_{1}^{hor}+w_{2}^{tan},$$
where $w_{1}=\alpha'(0)$ and $w_{2}=V'(0)$. Differentiating $T\circ\gamma=\alpha\wedge V$ with
respect to $t$ and estimating at $t=0$, we get
\begin{equation}\label{Eq: differential of T}
dT_{(x, v)}(Z)=dT_{(x, v)}(w_{1}^{hor}+w_{2}^{tan})=w_{1}\wedge v+x \wedge w_{2}.
\end{equation}
Fix a point $ x \in \S^3$ and consider a local orthonormal frame $\{v, \alpha_{2}, \alpha_{3}\}$ of $T_{x}\S^3$. Observe that $Z \in \ker dT_{(x, v)}$ if and only if
$$w_{1}\wedge v+x \wedge w_{2}=0.$$
Decomposing the vectors $w_{1}$ and $w_{2}$ in terms of $v, \alpha_{2}$ and  $\alpha_{3}$, we see that
$w_{1}\in \span{v}$ and $w_{2}=0$. Clearly the Stiefel bundle map is a submersion and the geodesic flow vector field $\xi$ generates the fibers of $T$. Furthermore, $Z \in \{\ker dT_{(x, v)}\}^{\perp}$ if and only if
\begin{equation}\label{Eq: T 1}
\langle w_{1}, v\rangle=0.
\end{equation}
Differentiating the expression $\langle V, V\rangle =1$ with respect to $t$ and evaluating at $t=0$, we easily obtain that
\begin{equation}\label{Eq: T 2}
\langle w_{2}, v\rangle=0.
\end{equation}
It remains to show that the map $T$ is a Riemannian submersion. Indeed, choose vector fields
$$
Z_{1}=w_{1}^{hor}+w_{2}^{tan}\in\{\ker dT_{(x, v)}\}^{\perp}\quad\text{and}\quad Z_{2}=w_{3}^{hor}+w_{4}^{tan} \in \{\ker dT_{(x, v)}\}^{\perp}.
$$
Using (\ref{Eq: T 1}) and (\ref{Eq: T 2}), we have
\begin{eqnarray}
(d T_{(x, v)}(Z_{1}), d T_{(x, v)}(Z_{2}))&=&(w_{1}\wedge v+x \wedge w_{2}, w_{3}\wedge v+x \wedge w_{4})\nonumber\\
&=&\langle w_{1}, w_{3}\rangle +\langle w_{2}, w_{4}\rangle -\langle w_{2}, x\rangle \langle w_{4}, x\rangle\nonumber\\
&=&g_{S}(Z_{1}, Z_{2}).\nonumber
\end{eqnarray}
This completes the proof of the proposition.
\end{proof}

We would like to relate the mean curvature of the vector field $\zeta$ with the mean curvature of the graph of the strictly 
length decreasing map $g:V\subset\S^2_-\to\S^2_+$. We need the following lemma.

\begin{lemma}\label{minimalzminimalg}
Let $\zeta:V\subset\S^3\to U\S^3$ be a unit vector field with totally geodesic integral curves defined in an saturated
neighbourhood $V$ of $\S^3$, let $\pi:V\subset\S^3\to\S^2_-$ the quotient map given in {\rm \ref{pi}} and let
$G:\pi(V)\subset\S_{-}^2\to\S_{-}^2\times\S_{+}^2$ be its corresponding graph.
\begin{enumerate}[\rm(a)]
\item
The quotient map
$\pi:(V\subset\S^3,\zeta^*g_S)\to (\S_{-}^2,G^*g_{\S^2_-\times\S^2_+})$
is a Riemannian submersion. Moreover, we have that
$T\circ\zeta=G\circ \pi$; compare also with \cite[equation (7.12) page 126]{gluck3}.
\medskip
\item
The following formula holds
$$
dT\big(A_{\zeta}(X, Y)\big)=A_G\big(d\pi(X), d\pi(Y)\big)+dG\big(B_{\pi}(X, Y)\big)
$$
for all vector fields $X, Y$ on $(V, \zeta^{*}g_{S})$ perpendicular to $\zeta$, where $T$ is the Stiefel map and $B_{\pi}$
the Hessian of $\pi:(V,\zeta^*g_S)\to (\S_{-}^2,G^*g_{\S^2_-\times\S^2_+})$. In particular,
\begin{equation}\label{Eq: relation mean curvatures}
dT\big(H_\zeta\big)=H_G+dG(\tau_{\pi}),
\end{equation}
where $\tau_{\pi}$ is the tension field of $\pi:(V,\zeta^*g_S)\to (\S_{-}^2,G^*g_{\S^2_-\times\S^2_+})$.
\end{enumerate}
\end{lemma}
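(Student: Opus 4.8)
The plan is to exploit the commutative square $T\circ\zeta=G\circ\pi$ relating the four maps in play: the unit vector field $\zeta\colon(V,\zeta^*g_S)\to(U\S^3,g_S)$ (an isometric embedding), the quotient map $\pi\colon(V,\zeta^*g_S)\to\S^2_-$, the graph $G\colon\S^2_-\to\S^2_-\times\S^2_+\simeq\mathbb{G}_2(\R^4)$ (an isometric embedding), and the Stiefel bundle map $T\colon(U\S^3,g_S)\to\mathbb{G}_2(\R^4)$ (a Riemannian submersion, by the previous Proposition). First I would record the identity: for $x\in V$, $T(\zeta(x))=T(x,\zeta(x))=x\wedge\zeta(x)$, and under $\mathbb{G}_2(\R^4)\simeq\S^2_-\times\S^2_+$ (see \eqref{dirsum1}) this $2$-vector is $\tfrac{x\wedge\zeta(x)-\ast(x\wedge\zeta(x))}{2}\oplus\tfrac{x\wedge\zeta(x)+\ast(x\wedge\zeta(x))}{2}$, which by the construction of $G$ in Lemma \ref{gw} through \eqref{graph} is exactly $G(\pi(x))$; hence $T\circ\zeta=G\circ\pi$. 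Next, since the integral curves of $\zeta$ are geodesics we have $\nabla_\zeta\zeta=0$, so \eqref{Eq: pull back metric Sasaki} gives $(\zeta^*g_S)(\zeta,\,\cdot\,)=g(\zeta,\,\cdot\,)$ and the $\zeta^*g_S$-orthogonal complement of $\ker d\pi=\span\{\zeta\}$ is precisely $\mathcal{H}$. By \eqref{Eq: differential of zeta}, $d\zeta(\zeta)=\zeta^{\h}=\xi$, and for $X\in\mathcal{H}$ one gets $g_S(d\zeta(X),\xi)=g_S(d\zeta(X),d\zeta(\zeta))=(\zeta^*g_S)(X,\zeta)=g(X,\zeta)=0$, so $d\zeta(\mathcal{H})$ lies in the horizontal bundle of $T$. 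Since $d\pi|_{\mathcal{H}}$ is a linear isomorphism onto $T\S^2_-$ and $dG\circ d\pi=dT\circ d\zeta$, evaluating inner products on $\mathcal{H}$ and using that $\zeta$, $G$ are isometric embeddings and $T$ is a Riemannian submersion yields $(G^*g)(d\pi(X),d\pi(Y))=(\zeta^*g_S)(X,Y)$; hence $d\pi|_{\mathcal{H}}\colon(\mathcal{H},\zeta^*g_S)\to(T\S^2_-,G^*g)$ is a linear isometry, i.e. $\pi$ is a Riemannian submersion. This establishes part (a).

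For part (b) I would invoke the composition law for the Hessian of maps, $B_{\psi_2\circ\psi_1}(X,Y)=d\psi_2(B_{\psi_1}(X,Y))+B_{\psi_2}(d\psi_1(X),d\psi_1(Y))$. Applying it to $T\circ\zeta$ and to $G\circ\pi$ and subtracting, using $T\circ\zeta=G\circ\pi$, gives
$$dT\big(B_\zeta(X,Y)\big)+B_T\big(d\zeta(X),d\zeta(Y)\big)=dG\big(B_\pi(X,Y)\big)+B_G\big(d\pi(X),d\pi(Y)\big)$$
for all vector fields $X,Y$. Now restrict to $X,Y$ perpendicular to $\zeta$. Since $\zeta$ and $G$ are isometric embeddings, $B_\zeta(X,Y)=A_\zeta(X,Y)$ and $B_G(d\pi(X),d\pi(Y))=A_G(d\pi(X),d\pi(Y))$. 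Since $d\zeta(X)$ and $d\zeta(Y)$ are horizontal for the Riemannian submersion $T$, and horizontal geodesics of a Riemannian submersion project to geodesics, the Hessian $B_T$ vanishes on pairs of horizontal vectors, so $B_T(d\zeta(X),d\zeta(Y))=0$. What remains is precisely $dT(A_\zeta(X,Y))=A_G(d\pi(X),d\pi(Y))+dG(B_\pi(X,Y))$.

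To pass to the mean curvatures I would trace this over a $\zeta^*g_S$-orthonormal frame $\{e_0=\zeta,e_1,e_2\}$ with $e_1,e_2\in\mathcal{H}$, for which two vanishing statements are needed. First, $A_\zeta(\zeta,\zeta)=0$: for an integral curve $\gamma$ of $\zeta$, the curve $\zeta\circ\gamma=(\gamma,\gamma')$ is an integral curve of the geodesic flow field $\xi$, and Lemma \ref{boeckx} gives $\nabla^{g_S}_{\xi}\xi=0$; hence $\zeta\circ\gamma$ is a geodesic of $(U\S^3,g_S)$ lying in the submanifold $\zeta(V)$, so by the Gauss formula both its intrinsic acceleration and its normal component vanish, i.e. $\gamma$ is a geodesic of $(V,\zeta^*g_S)$ and $A_\zeta(\zeta,\zeta)=0$. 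In particular $\nabla^{\zeta^*g_S}_{\zeta}\zeta=0$, whence $B_\pi(\zeta,\zeta)=-d\pi(\nabla^{\zeta^*g_S}_{\zeta}\zeta)=0$ since $d\pi(\zeta)=0$. Summing the identity of part (b) over $e_1,e_2$ and using part (a), so that $\{d\pi(e_1),d\pi(e_2)\}$ is $G^*g$-orthonormal and hence $\sum_{i}A_G(d\pi(e_i),d\pi(e_i))=H_G$ and $\sum_{i}B_\pi(e_i,e_i)=\tau_\pi$, one obtains \eqref{Eq: relation mean curvatures}.

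I expect the main obstacle to be the bookkeeping in part (a): although $d\zeta(\mathcal{H})$ is only a $2$-plane sitting inside the $4$-dimensional horizontal bundle of $T$, one must check that $dT$ still restricts to a linear isometry of it onto the tangent plane of the graph $G$, and that the Hessian $B_T$ annihilates it. Once this is settled, the composition law and the two identities $A_\zeta(\zeta,\zeta)=B_\pi(\zeta,\zeta)=0$ reduce everything else to routine manipulation with \eqref{Eq: pull back metric Sasaki}, \eqref{Eq: differential of zeta}, Lemma \ref{boeckx}, and the previous Proposition.
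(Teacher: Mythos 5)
Your proposal is correct and follows essentially the same route as the paper: the identity $T\circ\zeta=G\circ\pi$, the observation that $d\zeta(\mathcal{H})$ lies in the horizontal bundle of the Riemannian submersion $T$ (so that $B_T$ vanishes there), the composition law for Hessians, and the vanishing of $A_\zeta(\zeta,\zeta)$ and $B_\pi(\zeta,\zeta)$ before tracing. The only cosmetic difference is that you derive $A_\zeta(\zeta,\zeta)=0$ by viewing $\zeta\circ\gamma$ as an integral curve of the geodesic flow field, whereas the paper computes $\nabla^{\zeta^*g_S}_{\zeta}\zeta=0$ via the Koszul formula and then applies Lemma \ref{boeckx}; both arguments are equivalent.
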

\begin{proof}
(a) From the definition of the Stiefel map, we have
\begin{equation*}
T(\zeta(x))=T(x, \zeta_x)=x\wedge \zeta_x=(g_{-}(\pi(x)), g_{+}(\pi(x)))=G(\pi(x)),
\end{equation*}
for all $p\in V$. Hence, $T\circ \zeta=G\circ \pi$. Since the unit vector field $\zeta$ has
geodesic integral curves, making use of (\ref{Eq: pull back metric Sasaki}), we get
\begin{eqnarray}\label{Eq: equivalence}
\{\ker dT_{\zeta}\}^{\perp}&=&\big\{W=X^{hor}+Y^{tan} \in T_{\zeta}U\S^3: \langle X, \zeta\rangle=0\big\}\nonumber\\
&=&\big\{W=X^{hor}+Y^{tan} \in T_{\zeta}U\S^3: \zeta^{*}g_{S}(X, \zeta)=0\big\}.
\end{eqnarray}
We claim now that $\pi:(V, \zeta^{*}g_{S})\to (\S^2_-, G^*g_{\S^2_-\times\S^2_+})$
is a Riemannian submersion. Indeed, from the equations (\ref{Eq: differential of zeta}), (\ref{Eq: pull back metric Sasaki}),
(\ref{Eq: equivalence}) and the fact that $T$ is a Riemannian submersion, we obtain
\begin{eqnarray}
G^*g_{\S^2_-\times\S^2_+}(d\pi(X), d\pi(Y))&=&g_{\S_{-}^2\times\S_{+}^2}\big(dG(d\pi(X)), dG(d\pi(Y))\big)\nonumber\\
&=&g_{\S_{-}^2\times\S_{+}^2}\big(dT_{\zeta}(d\zeta(X)), dT_{\zeta}(d\zeta(Y))\big)\nonumber\\
&=&g_{S}\big(X^{hor}+(\nabla_{X}\zeta)^{tan}, Y^{hor}+(\nabla_{Y}\zeta)^{tan}\big)\nonumber\\
&=&\zeta^{*}g_{S}(X, Y),\nonumber
\end{eqnarray}
for all vector fields $X, Y$ on $V\subset\S^3$ perpendicular to $\zeta$.

(b) Using the Koszul formula and the fact that $T$ is a Riemannian submersion we deduce that for any pair of $v_{1}, v_{2} \in \{\ker d T\}^{\perp}$, we have
\begin{equation}\label{Eq: Hessian T}
B_{T}(v_{1}, v_{2})=\nabla^{T}_{v_{1}}dT(v_{2})-dT(\nabla^{g_{S}}_{v_{1}}v_{2})=0,
\end{equation}
for details see also \cite[Lemma 4.5.1, page 119]{baird1}.
Since $T\circ \zeta=G\circ \pi$, we have
$$B_{T\circ \zeta}=B_{G\circ \pi}.$$
From the composition
formula, we obtain that for all vector fields $X, Y\in\mathfrak{X}(V)$, it holds
\begin{equation*}
dT(A_{\zeta}(X, Y))+B_{T}(d \zeta(X), d \zeta(Y))=d G (B_{\pi}(X, Y))+A_{G}(d\pi (X), d\pi(Y)).
\end{equation*}
Using (\ref{Eq: differential of zeta}) and (\ref{Eq: equivalence}), we easily see that if $\zeta^{*}g_{S}(X, \zeta)=0$, then $d\zeta(X)\in \{\ker d T_{\zeta}\}^{\perp}$. Therefore, (\ref{Eq: Hessian T}) gives
\begin{equation}\label{Eq: Hessian zeta graph}
dT(A_{\zeta}(X, Y))=d G (B_{\pi}(X, Y))+A_{G}(d\pi(X), d\pi(Y)),
\end{equation}
for all vector fields $X, Y$ on $(V\subset\S^3, \zeta^{*}g_{S})$ perpendicular to $\zeta$. Using the Koszul formula, we get
$$
2\zeta^{*}g_{S}(\nabla^{\zeta^{*}g_{S}}_{\zeta}\zeta, X)=-2\zeta^{*}g_{S}([\zeta, X], \zeta)=-2\langle [\zeta, X], \zeta \rangle
=2\langle \nabla_{\zeta}\zeta, X\rangle=0,
$$
for all vector fields X on $V\subset\S^{3}$ such that $\zeta^{*}g_{S}(X, \zeta)=0$. Hence, $\nabla^{\zeta^{*}g_{S}}_{\zeta}\zeta=0$. Using the identity
\begin{equation*}
\nabla^{g_{S}}_{X^{hor}}Y^{hor}=(\nabla_{X}Y)^{hor}-\tfrac{1}{2}\big(R_{\S^3}(X, Y)v\big)^{tan},
\end{equation*}
of Lemma \ref{boeckx}, we deduce that
\begin{equation}\label{Eq: Hessian zeta zeta}
A_{\zeta}(\zeta,\zeta)=\nabla^{g_{S}}_{\zeta^{hor}}\zeta^{hor}-d \zeta(\nabla^{\zeta^{*}g_{S}}_{\zeta}\zeta)=0.
\end{equation}
Taking $\trace$, with respect to the metric $\zeta^{*}g_{S}$, at the two sides of (\ref{Eq: Hessian zeta graph}), using the fact that the quotient map
$$\pi:(\S^3, \zeta^{*}g_{S}) \to (\S_{-}^2, G^*g_{\S^2_-\times\S^2_+})$$ is a Riemannian submersion and (\ref{Eq: Hessian zeta zeta}), we get (\ref{Eq: relation mean curvatures}). This completes the proof.
\end{proof}

Now we are ready to prove Theorem \ref{thmD}.

\begin{thma}
A unit vector field with totally geodesic integral curves defined in an saturated neighbourhood $V$ of $\S^3$ is minimal if and only if its
corresponding graphical surface is minimal in $\S^2\times\S^2$.
\end{thma}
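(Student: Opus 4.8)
The plan is to read off the statement from the mean--curvature comparison formula \eqref{Eq: relation mean curvatures} of Lemma~\ref{minimalzminimalg}, once we understand how the Stiefel bundle map $T$ acts on the normal bundle of the embedded hypersurface $\zeta(V)\subset U\S^3$. Recall that $\zeta$ is a minimal unit vector field precisely when the isometric embedding $\zeta:(V,\zeta^*g_S)\to(U\S^3,g_S)$ has vanishing mean curvature vector $H_\zeta$, and that by Lemma~\ref{gw} the fibration produces a graphical surface $G=(h_-,h_+)$ in $\mathbb{G}_2(\R^4)\simeq\S^2_-\times\S^2_+$; minimality of $G$ there is equivalent to minimality in $\S^2\times\S^2$, the two product metrics differing only by a constant scaling ($1/\sqrt2$ on each factor). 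Hence it suffices to establish the equivalence $H_\zeta=0$ if and only if $H_G=0$.

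The key geometric point is that $dT$ carries the normal bundle of $\zeta(V)$ isometrically onto the normal bundle of $G$. Indeed, since the integral curves of $\zeta$ are geodesics of $\S^3$ we have $\nabla_\zeta\zeta=0$, so \eqref{Eq: differential of zeta} gives $d\zeta(\zeta)=\zeta^{hor}=\xi\circ\zeta$; thus the geodesic flow vector field $\xi$, which spans the vertical distribution of the Riemannian submersion $T$, is tangent to $\zeta(V)$. Consequently the $2$--dimensional normal bundle $N\zeta(V)$ is contained in $\xi^{\perp}$, on which $dT$ restricts to a linear isometry onto $T(\S^2_-\times\S^2_+)$. Writing $\xi^{\perp}=\big(T\zeta(V)\cap\xi^{\perp}\big)\oplus N\zeta(V)$ and using $T\circ\zeta=G\circ\pi$ (so that $dT|_{T\zeta(V)}$ has image $TG$ and kernel $\operatorname{span}\xi$), we see that $dT$ sends $T\zeta(V)\cap\xi^{\perp}$ isometrically onto $TG$ and therefore $N\zeta(V)$ isometrically onto the normal bundle $NG$ of $G$ in $\S^2_-\times\S^2_+$.

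To finish I would combine this with \eqref{Eq: relation mean curvatures}, which reads $dT(H_\zeta)=H_G+dG(\tau_\pi)$. The left--hand side lies in $NG$ because $H_\zeta\in N\zeta(V)$; also $H_G\in NG$ by definition of the mean curvature vector, while $dG(\tau_\pi)\in TG$. Projecting the identity onto $TG$ forces $dG(\tau_\pi)=0$ (recovering as a byproduct that the Riemannian submersion $\pi$ with totally geodesic fibres is harmonic), and projecting onto $NG$ yields $dT(H_\zeta)=H_G$. Since $dT$ is an isometry on $N\zeta(V)$, we conclude $H_\zeta=0$ if and only if $H_G=0$, i.e. $\zeta$ is minimal if and only if its graphical surface is minimal in $\S^2\times\S^2$.

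The only delicate step is the second one, the identification of $dT(N\zeta(V))$ with $NG$; it rests entirely on the fact that $\xi=d\zeta(\zeta)$ is tangent to $\zeta(V)$ and hence orthogonal to $N\zeta(V)$, and that is precisely where the totally geodesic (great circle) hypothesis on the fibres enters. The strict length-decreasing condition is used only to guarantee that $G$ is genuinely a graph, so that the splitting into $TG$ and $NG$ makes sense and $dG$ is injective. Everything else is a formal consequence of Lemma~\ref{minimalzminimalg} and the Riemannian submersion property of $T$.
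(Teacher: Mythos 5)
Your proof is correct and follows essentially the same route as the paper: both rest on the identity $dT(H_\zeta)=H_G+dG(\tau_\pi)$ from Lemma~\ref{minimalzminimalg}, the fact that $\xi=d\zeta(\zeta)$ spans $\ker dT$ and is tangent to $\zeta(V)$ so that $dT$ is an isometry on $\{\ker dT\}^{\perp}\supset N\zeta(V)$, and the observation that $dT(H_\zeta)$ is normal to the graph while $dG(\tau_\pi)$ is tangent. The only (harmless) difference is one of packaging: you extract the single identity $dT(H_\zeta)=H_G$ by splitting into tangent and normal components once, whereas the paper argues the two implications separately.
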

\begin{proof}
We follow the notation of the above lemma. Suppose that $\zeta$ is a minimal unit vector field. Then, $H_{\zeta}=0$. Using (\ref{Eq: relation mean curvatures}), we easily obtain that $H_{G}=0$ i.e. the strictly decreasing map $g$ is a minimal map. Conversely, we assume that $g$ is a minimal map. Then,
\begin{equation}\label{Eq: minimality 1}
dT(H_{\zeta})=dG(\tau_{\pi}).
\end{equation}
Since $\xi_{\zeta}=\zeta^{hor}=d\zeta(\zeta),$
we easily observe that the mean curvature $H_{\zeta}$ is perpendicular to the geodesic flow vector field, i.e. $H_{\zeta} \in \{\ker dT_{\zeta}\}^{\perp}$. Furthermore, $dT(H_{\zeta})$ is normal to the graph of $g$. Indeed,
$$
g_{\S_{-}^2\times\S_{+}^2}\big(dT(H_{\zeta}), dG(d\pi(X))\big)=g_{\S_{-}^2\times\S_{+}^2}\big(dT(H_{\zeta}), dT(d\zeta(X))\big)
=g_{S}\big(H_{\zeta}, d\zeta(X)\big)=0,
$$
for all vector fields $X$ on $V\subset\S^3$ such that $\zeta^{*}g_{S}(X, \zeta)=0$. Using (\ref{Eq: minimality 1}), we have that $dT(H_{\zeta})=0$. Since $T$ is an isometry restricted to $\{\ker dT_{\zeta}\}^{\perp}$, we deduce that $H_{\zeta}=0$, i.e. the vector field $\zeta$ is a minimal unit vector field. This completes the proof.
\end{proof}

As an immediate consequence of  Theorem \ref{thmD} and the Bernstein type Theorem A in \cite{savas1} we obtain the following uniqueness type
theorem.

\begin{thmd}
A minimal unit vector field $\zeta$ on $\S^3$, whose integral curves are great circles, is a Hopf vector field and the corresponding quotient map $f:\S^3\to\S^2$ is a Hopf fibration.
\end{thmd}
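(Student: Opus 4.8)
The plan is to deduce Theorem~D from the already established Theorem~\ref{thmD}, the global part of the Gluck--Warner classification recorded in Theorem~\ref{localhopf}(c), and the Bernstein-type theorem of \cite{savas1}, in direct analogy with the proofs of Corollary~\ref{gluckgu} and of Theorem~C.

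First I would fix the global geometric picture. Since $\zeta$ is a globally defined unit vector field on $\S^3$ all of whose integral curves are great circles, the induced quotient map $\pi\colon\S^3\to\S^2_-$ is a globally defined submersion with totally geodesic fibres. By Lemma~\ref{gw}(b) and Theorem~\ref{localhopf}(c), the associated Gauss image is the graph $G=(h_-,h_+)$ of a globally defined, smooth, strictly length decreasing map $g\colon\S^2_-\to\S^2_+$ between the two round $2$-spheres; here, as in \eqref{graph}, $h_\pm(f(x))=\tfrac12\big(x\wedge\zeta(x)\pm\ast(x\wedge\zeta(x))\big)$.

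Next, assume $\zeta$ is a minimal unit vector field, that is $H_\zeta=0$. By Theorem~\ref{thmD} the graph $G$ is a minimal surface in $\S^2_-\times\S^2_+$; equivalently, $g$ is a minimal map. Thus $g$ is a strictly length decreasing minimal map between two $2$-spheres, so \cite[Theorem~A]{savas1} applies and forces $G$ to be totally geodesic. A totally geodesic surface of $\S^2_-\times\S^2_+$ which is simultaneously the graph of a strictly length decreasing map over the first factor must be a slice $\S^2_-\times\{p\}$, i.e.\ $g$ is constant. Finally, by Theorem~\ref{localhopf}(c), the condition that $g$ be constant is precisely equivalent to $\zeta$ being a Hopf vector field and $f$ a Hopf fibration, which is the assertion.

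I expect the only point requiring genuine care --- though it is entirely contained in Theorem~\ref{localhopf}(c) --- to be the globalization step: one must make sure that the locally defined charts $\xi$ and $h_\pm$ attached to the leaves of the foliation patch together into a single global diffeomorphism $h_-\colon\S^2_-\to\S^2_-$, hence a single global map $g=h_+\circ h_-^{-1}$ defined on all of $\S^2_-$, so that the compact-domain Bernstein theorem of \cite{savas1} can be invoked. Once this is granted, every remaining step is a direct citation of results proved earlier in the paper, and no further computation is needed.
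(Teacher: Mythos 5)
Your proposal follows exactly the route the paper takes: it deduces the statement as an immediate consequence of Theorem~\ref{thmD} (minimality of $\zeta$ is equivalent to minimality of the graphical surface $G$), the Bernstein-type Theorem~A of \cite{savas1} applied to the strictly length decreasing map $g$ furnished by Lemma~\ref{gw}, and Theorem~\ref{localhopf}(c) to conclude that $g$ constant forces $\zeta$ to be a Hopf vector field. Your write-up is correct and, if anything, more explicit than the paper's one-line derivation about the globalization of $g$ to all of $\S^2_-$.
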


\begin{remark}
Let us conclude the paper with some comments and final remarks.
\begin{enumerate}[\rm (a)]
\item
Let $f:\S^3\to\S^2$ be a smooth map and $\lambda_1=0\le\lambda_2\le\lambda_3$ be its singular values.
The {\em $2$-dilation} ${\rm Dil}_2(f)=\sup\lambda_2\lambda_3$ of $f$ measures
how much $f$ contract the areas of surfaces $\Sigma\subset\S^3$. In \cite{assimos}, it is shown that if ${\rm Dil}_2(f)<2$
then $f$ is null-homotopic. Since, any submersion with totally geodesic fibers is homotopically non-trivial, it follows that the $2$-dilation of
any such map is greater or equal than $2$.
\smallskip

\item An interesting problem is to classify {\em minimal maps} $f:\S^3\to\S^2$ with totally geodesic fibers. The map $f$ is
harmonic when we equip $\S^3$ with the {\em graphical metric} $g=g_{\S^3}+f^*g_{\S^2}$,
where $g_{\S^3}$ is the metric of $\S^3$ and $g_{\S^2}$ is the metric of $\S^2$. It seems that the methods
developed in the present paper cannot be easily used, since in the structure equations \eqref{ricatti} the curvature operator of the graphical metric
shows up; for more information regarding this problem we refer to \cite{savas-4}.

\smallskip
\item
In \cite{savas3} it is shown that the graphical mean curvature flow (GMCF) will smoothly deform any strictly length decreasing map
$g:\S^2\to\S^2$ into a constant map. Moreover, this process will preserve the strict length decreasing property. Hence,
GMCF will generate a smooth deformation of a unit vector field with totally geodesic fibers into a Hopf vector field.

\smallskip
\item An analogue problem in submanifold theory is to classify minimal
$m$-dimensional submanifolds in $\S^n$ with index of relative nullity at least $m-2$. Such submanifolds
are foliated by $(m-2)$-dimensional totally geodesic spheres. Under the assumption of completeness, it turns out that any
such submanifold is either totally geodesic or has dimension three. In the latter case there are plenty of examples,
even compact ones. For more details, we refer to \cite{dajczer1,savas0}.
\end{enumerate}
\end{remark}

\begin{bibdiv}
\begin{biblist}

\bib{assimos}{article}{
   author={Assimos, R.},
   author={Savas-Halilaj, A.},
   author={Smoczyk, K.},
   title={Mean curvature flow of area decreasing maps in codimension two},
   journal={arXiv: 2201.05523},
   date={2022},
   pages={1--39},
}

\bib{baird1}{book}{
   author={Baird, P.},
   author={Wood, J.},
   title={Harmonic morphisms between Riemannian manifolds},
   series={London Mathematical Society Monographs. New Series},
   volume={29},
   publisher={The Clarendon Press, Oxford University Press, Oxford},
   date={2003},
}

\bib{baird}{article}{
   author={Baird, P.},
   title={The Gauss map of a submersion},
   conference={
      title={Miniconference on Geometry and Partial Differential Equations},
   },
   book={
      publisher={The Australian National University},
      place={Centre for Mathematical Analysis, Canberra AUS},
   },
   date={1986},
   pages={8--24},
}

\bib{blair}{book}{
   author={Blair, D.E.},
   title={Riemannian geometry of contact and symplectic manifolds},
   series={Progress in Mathematics},
   volume={203},
   publisher={Birkh\"{a}user Boston, Ltd., Boston, MA},
   date={2010},
}

\bib{boeckx}{article}{
   author={Boeckx, E.},
   author={Vanhecke, L.},
   title={Harmonic and minimal vector fields on tangent and unit tangent
   bundles},
   journal={Differential Geom. Appl.},
   volume={13},
   date={2000},
   pages={77--93},
}

\bib{borrelli}{article}{
   author={Borrelli, V.},
   author={Gil-Medrano, O.},
   title={A critical radius for unit Hopf vector fields on spheres},
   journal={Math. Ann.},
   volume={334},
   date={2006},
   number={4},
   pages={731--751},
}

\bib{brito}{article}{
   author={Brito, F.},
   title={Total bending of flows with mean curvature correction},
   journal={Diff. Geom. and its Appl.},
   volume={334},
   date={2000},
   pages={157--163},
}

\bib{dajczer1}{article}{
   author={Dajczer, M.},
   author={Kasioumis, Th.},
   author={Savas-Halilaj, A.},
   author={Vlachos, Th.},
   title={Complete minimal submanifolds with nullity in Euclidean spheres},
   journal={Comment. Math. Helv.},
   volume={93},
   date={2018},
   pages={645--660},
}

\bib{dombrowski}{article}{
   author={Dombrowski, P.},
   title={On the geometry of the tangent bundle},
   journal={J. Reine Angew. Math.},
   volume={210},
   date={1962},
   pages={73--88},
}

\bib{eells}{article}{
   author={Eells, J.},
   title={Minimal graphs},
   journal={Manuscripta Math.},
   volume={28},
   date={1979},
   pages={101--108},
}

\bib{escobales}{article}{
author={Escobales, R.},
title={Riemannian submersions with totally geodesic fibers},
journal={J. Differential Geom.},
volume={10},
date={1975},
pages={253-276},
}

\bib{medrano1}{article}{
   author={Gil-Medrano, O.},
   title={Volume minimising unit vector fields on three dimensional space
   forms of positive curvature},
   journal={Calc. Var. Partial Differential Equations},
   volume={61},
   date={2022},
   number={2},
   pages={Paper No. 66, 8},
}

\bib{medrano2}{article}{
   author={Gil-Medrano, O.},
   author={Llinares-Fuster, E.},
   title={Minimal unit vector fields},
   journal={Tohoku Math. J.},
   volume={54},
   date={2002},
   pages={71--84},
}

\bib{medrano3}{article}{
   author={Gil-Medrano, O.},
   author={Llinares-Fuster, E.},
   title={Second variation of volume and energy of vector fields. Stability of Hopf vector fields},
   journal={Math. Ann.},
   volume={320},
   date={2012},
   pages={531--545},
}

\bib{gluck0}{article}{
   author={Gluck, H.},
   title={Great circle fibrations and contact structures on the 3-sphere},
   journal={arXiv:1802.03797},
   date={2018},
   pages={1--19},
}

\bib{gluck1}{article}{
   author={Gluck, H.},
   author={Gu, W.},
   title={Volume-preserving great circle flows on the 3-sphere},
   journal={Geom. Dedicata},
   volume={88},
   date={2001},
   pages={259--282},
}

\bib{gluck2}{article}{
   author={Gluck, H.},
   author={Ziller, W.},
   title={On the volume of a unit vector field on the three-sphere},
   journal={Comment. Math. Helv.},
   volume={61},
   date={1986},
   pages={177--192},
}

\bib{gluck3}{article}{
   author={Gluck, H.},
   author={Warner, F.},
   title={Great circle fibrations of the three-sphere},
   journal={Duke Math. J.},
   volume={50},
   date={1983},
   pages={107-132},
}

\bib{gluck4}{article}{
   author={Gluck, H.},
   author={Warner, F.},
   author={Ziller, W.},
   title={The geometry of the Hopf fibrations}
   journal={L'Enseign. Math.},
   volume={32},
   date={1986}
   pages={173--198},
}

\bib{han}{article}{
   author={Han, D.-S.},
   author={Yim, J-W.},
   title={Unit vector fields on spheres, which are harmonic maps}
   journal={Math. Z.},
   volume={227},
   date={1998}
   pages={83--92},
}

\bib{heller}{article}{
   author={Heller, S.},
   title={Conformal fibrations of $\Bbb S^3$ by circles},
   conference={
      title={Harmonic maps and differential geometry},
   },
   book={
      series={Contemp. Math.},
      volume={542},
      publisher={Amer. Math. Soc., Providence, RI},
   },
   date={2011},
   pages={195--202},
}

\bib{higuchi}{article}{
   author={Higuchi, A.},
   author={Kay, B.},
   author={Wood, C.M.}
   title={The energy of unit vector fields on the 3-sphere},
   journal={J. Geom. Phys.},
   volume={37},
   date={2001},
   pages={137--155},
}

\bib{johnson}{article}{
   author={Johnson, D.L.},
   title={Volumes of flows},
   journal={Proc. Am. Math. Soc.},
   volume={104},
   date={1988},
   pages={923--931},
}

\bib{savas-4}{article}{
   author={Markellos, M.},
   author={Savas-Halilaj, A.},
   title={Rigidity of the Hopf fibration},
   journal={Calc. Var. Partial Differential Equations},
   volume={60},
   date={2021},
   number={5},
   pages={Paper No. 171, 34},
}

\bib{pedersen}{article}{
   author={Pedersen, S.L.},
   title={Volumes of vector fields on spheres},
   journal={Trans. Am. Math. Soc.},
   volume={336},
   date={1993},
   pages={69--78},
}

\bib{sasaki}{article}{
   author={Sasaki, S.},
   title={On the differential geometry of tangent bundles of Riemannian manifolds},
   journal={Tohoku Math. J.},
   volume={10},
   date={1958},
   pages={338--354},
}

\bib{ruh}{article}{
   author={Ruh, E.A.},
   author={Vilms, J.},
   title={The tension field of the Gauss map},
   journal={Trans. Am. Math. Soc.},
   volume={149},
   date={1970},
   pages={569--573},
}

\bib{savas3}{article}{
   author={Savas-Halilaj, A.},
   author={Smoczyk, K.},
   title={Evolution of contractions by mean curvature flow},
   journal={Math. Ann.},
   volume={361},
   date={2015},
   pages={725--740},
}

\bib{savas1}{article}{
   author={Savas-Halilaj, A.},
   author={Smoczyk, K.},
   title={Bernstein theorems for length and area decreasing minimal maps},
   journal={Calc. Var. Partial Differential Equations},
   volume={50},
   date={2014},
   pages={549--577},
}

\bib{savas0}{article}{
   author={Savas-Halilaj, A.},
   title={On deformable minimal hypersurfaces in space forms},
   journal={J. Geom. Anal.},
   volume={23},
   date={2013},
   pages={1032--1057},
}

\bib{wangg}{article}{
   author={Wang, G.},
   title={$\S^1$-invariant harmonic maps from $\S^3$ to $\S^2$},
   journal={Bull. London Math. Soc.},
   volume={32},
   date={2000},
   pages={729--735},
  }
  
  \bib{wiegmink}{article}{
   author={Wiegmink, G.},
   title={Total bending of vector fields on Riemannian manifolds},
   journal={Math. Ann.},
   volume={303},
   date={1995},
   pages={325--344},
  }

\bib{wood}{article}{
   author={Wood, C.M.},
   title={The energy of a unit vector field},
   journal={Geom. Dedicata},
   volume={64},
   date={1997},
   pages={319--330},
}

\bib{yampolsky}{article}{
   author={Yampolsky, A.},
   title={On the mean curvature of a unit vector field},
   journal={Publ. Math.},
   volume={60},
   date={2002},
   pages={131--155},
}
		
\end{biblist}
\end{bibdiv}

\end{document}